\numberwithin{equation}{section}
\newtheorem{thm}{Theorem}[section]
\newtheorem{lm}[thm]{Lemma}
\newtheorem{pr}[thm]{Proposition}
\theoremstyle{definition}
\newtheorem{df}[thm]{Definition}
\theoremstyle{definition}
\newtheorem{conjecture}[thm]{Conjecture}
\newcommand{\Sn}{S^{n-1}}
\newcommand{\Rn}{\mathbb{R}^{n}}
\newcommand{\R}{\mathbb{R}}
\newcommand{\Q}{\mathbb{Q}}
\newcommand{\Z}{\mathbb{Z}}
\newcommand{\M}{\mathcal{M}}
\newcommand {\grtrsim} {\ {\raise-.5ex\hbox{$\buildrel>\over\sim$}}\ }
\newcommand{\khii}{\text{\lower -.4ex\hbox{$\chi$}}}
\DeclareMathOperator{\Int}{Int}
\begin{document}
\title{Hausdorff dimension, projections, intersections, and Besicovitch sets}
\author{Pertti Mattila}

\thanks{The author was supported by the Academy of Finland through the Finnish Center of Excellence in Analysis and Dynamics Research} \subjclass[2000]{Primary 28A75} \keywords{Hausdorff dimension, orthogonal projection, intersection, Besicovitch set}

\begin{abstract} 
This is a survey on recent developments on the Hausdorff dimension of projections and intersections for general subsets of Euclidean spaces, with an emphasis on estimates of the Hausdorff dimension of exceptional sets and on restricted projection families. We shall also discuss relations between projections and Hausdorff dimension of Besicovitch sets.

\end{abstract}

\maketitle

\section{Introduction}

In this survey I shall discuss some recent results on integral-geometric properties of Hausdorff dimension and their relations to Kakeya type problems. More precisely, by integral-geometric properties I mean properties related to affine subspaces of Euclidean spaces and to rigid motions; orthogonal projections into planes, intersections with planes, and intersections of two sets after a generic rigid motion is applied to one of them. Such questions have been studied for more than 60 years and there have been a lot of recent activities on them. In particular, I shall discuss estimates on the Hausdorff dimension of exceptional sets of planes and rigid motions, and projections on restricted families of planes. Besicovitch sets are sets of Lebesgue measure zero containing a unit line segment in every direction. They are expected to have full Hausdorff dimension. This problem is related to many topics in modern Fourier analysis. It is also related to projection theorems, as we shall see at the end of this survey. In the last section I shall also discuss $(n,k)$ Besicovitch sets, lines replaced with $k$-planes, and their relations to projections.

Other recent surveys partially overlapping with this are \cite{FFJ}, \cite{Ke3}, \cite{S} and \cite{M5}.

Most of the background material can be found, for example, in the books \cite{M4} and \cite{M6}.

This survey is partially based on the lectures I gave in the CIMPA2017 conference in Buenos Aires in August 2017. I would like to thank Ursula Molter, Carlos Cabrelli and the other organizers for that very pleasant and succesful event. I am grateful to Tuomas Orponen for many useful comments.

\section{Hausdorff dimension, energy integrals and the Fourier transform}
I give here a quick review of the Hausdorff dimension and its relations to energy-integrals and the Fourier transform. The details can be found in \cite{M4} and \cite{M6}.

The $s$-dimensional \emph{Hausdorff measure} 
$\mathcal H^s, s\geq 0$, is defined for $A\subset\R^n$ by
$$\mathcal H^s(A)=\lim_{\delta\to0} \mathcal H_{\delta}^s(A),$$
where, for $0<\delta\leq\infty$, 
$$\mathcal H_{\delta}^s(A)=\inf\{\sum_{j=1}^{\infty}d(E_j)^s:A\subset\bigcup_{j=1}^{\infty}E_j, d(E_j)<\delta\}.$$
Here $d(E)$ denotes the diameter of the set $E$.  

Then $\mathcal H^n$ is a constant multiple of the Lebesgue measure $\mathcal L^n$ and the restriction of $\mathcal H^{n-1}$ to the unit sphere $\Sn=\{x\in\Rn:|x|=1\}$ is a constant multiple of the surface measure.

The \emph{Hausdorff dimension} 
of  $A$  is
$$\dim A=\inf\{s:\mathcal H^s(A)=0\}=\sup\{s:\mathcal H^s(A)=\infty\}.$$

For $A\subset\R^n$, let $\mathcal M(A)$ be the set of Borel measures $\mu$ such that $0<\mu(A)<\infty$ and $\mu$ has compact support spt$\mu\subset A$. We denote by $B(x,r)$ the closed ball with center $x$ and radius $r$. The following is a useful tool for proving lower bounds for the Hausdorff dimension:

\begin{thm}[Frostman's lemma]
Let $0\leq s\leq n$. For a Borel set $A\subset\R^n, \mathcal H^s(A)>0$ if and only there is $\mu\in\mathcal M(A)$ such that 
\begin{equation}\label{frostman1}
\mu(B(x,r))\leq r^s\quad \text{for all}\ x\in\R^n, r>0.
\end{equation}
In particular,
$$\dim A=\sup\{s:\text{there is}\ \mu\in\mathcal M(A)\ \text{such that (\ref{frostman1}) holds}\}.$$
\end{thm}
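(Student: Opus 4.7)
The easy direction would follow at once from a covering argument: given $\mu\in\M(A)$ satisfying (\ref{frostman1}), for any cover $A\subset\bigcup_j E_j$ with $d(E_j)<\de$, each $E_j$ lies inside a closed ball of radius $d(E_j)$ centred at one of its own points, so (\ref{frostman1}) yields $\mu(E_j)\leq d(E_j)^s$. Summing over $j$ and taking the infimum over covers gives $0<\mu(A)\leq \H^s_\de(A)$, and letting $\de\to 0$ produces $\H^s(A)\geq\mu(A)>0$. I would dispose of this direction first.

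For the converse I would reduce to $A$ compact (a Borel set of positive $\H^s$ measure contains a compact subset of positive $\H^s$ measure) and then run a standard dyadic mass-distribution construction. Enclose $A$ in a unit dyadic cube $Q_0$ and let $\mathcal{Q}_k$ denote its dyadic subcubes of side $2^{-k}$. For each large $N$ I would build $\mu_N$ in two sweeps. Bottom-up: place uniform Lebesgue mass of total weight $(2^{-N})^s$ on each $Q\in\mathcal{Q}_N$ meeting $A$, and nothing elsewhere. Top-down, for $k=N-1,\dots,0$: whenever $Q\in\mathcal{Q}_k$ has current $\mu_N(Q)>(2^{-k})^s$, rescale $\mu_N$ on $Q$ by the factor $(2^{-k})^s/\mu_N(Q)$. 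Since further rescalings only decrease mass, one ends with $\mu_N(Q)\leq \ell(Q)^s$ for every dyadic cube $Q$ of level $\leq N$, where $\ell(Q)$ is the side length.

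The quantitative heart of the argument is an $N$-uniform lower bound for $\mu_N(\Rn)$. For every $x\in A$, some dyadic cube containing $x$ of level $\leq N$ is saturated at the end of the process, meaning $\mu_N(Q)=\ell(Q)^s$: every level-$N$ cube meeting $A$ begins saturated, and either that saturation survives the sweep or, at the coarsest ancestor scale where rescaling occurs, that ancestor ends up saturated (no coarser rescaling can then touch it). Selecting the maximal saturated cubes produces a disjoint dyadic family $\{Q_i\}$ covering $A$ with $\mu_N(Q_i)=\ell(Q_i)^s=n^{-s/2}d(Q_i)^s$, so $\H^s_\infty(A)\leq\sum_i d(Q_i)^s\leq n^{s/2}\mu_N(\Rn)$; since $\H^s(A)>0$ forces $\H^s_\infty(A)>0$, the masses $\mu_N(\Rn)$ are uniformly bounded below by a positive constant.

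I would then extract a weak$^*$ limit $\mu$ along a subsequence, which is legitimate as the $\mu_N$ are uniformly bounded and supported in $\overline{Q_0}$. The support of $\mu$ lies in $A$ because each $\mu_N$ sits in the $\sqrt{n}\,2^{-N}$-neighbourhood of $A$, and the dyadic bound $\mu(Q)\leq \ell(Q)^s$ passes to the limit. Any ball $B(x,r)$ is covered by a dimensionally bounded number of dyadic cubes of side at most $2r$, which upgrades the dyadic bound to $\mu(B(x,r))\leq C_n r^s$; renormalising by $C_n^{-1}$ yields the clean form (\ref{frostman1}), and the mass lower bound guarantees $\mu(A)>0$. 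The ``in particular'' identity for $\dim A$ is then immediate from the established equivalence together with the definition of Hausdorff dimension, since for every $t<\dim A$ one has $\H^t(A)=\infty>0$, producing a Frostman measure at level $t$. The step I expect to be the main obstacle is the saturation/stopping-time argument: verifying carefully that the coarsest rescaled ancestor really survives as a saturated cube, and that the resulting maximal saturated cubes both cover $A$ and are pairwise disjoint.
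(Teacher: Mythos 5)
Your argument is correct. The paper itself gives no proof of Frostman's lemma --- it is stated as background with the details deferred to \cite{M4} and \cite{M6} --- and what you have written is precisely the standard proof found there: the trivial covering argument for the easy direction, reduction to compact sets, the bottom-up/top-down dyadic mass-distribution construction with the stopping-time selection of maximal saturated cubes to bound the total mass below by $\mathcal H^s_\infty(A)$, and a weak$^*$ limit; all the steps, including the saturation argument you flag as delicate, go through exactly as you describe.
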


Such measures $\mu$ are often called Frostman measures.

The  $s$-\emph{energy}, $s>0$, of a Borel measure $\mu$ is
$$I_s(\mu)=\iint|x-y|^{-s}\,d\mu x\,d\mu y=\int k_s\ast\mu\, d\mu,$$
where $k_s$ is the \emph{Riesz kernel}:
$$k_s(x)=|x|^{-s},\quad x\in\Rn.$$

Integration of Frostman's lemma gives

\begin{thm}\label{energy} 
For a Borel set $A\subset\Rn,$
$$\dim A=\sup\{s:\text{there is}\ \mu\in\mathcal M(A)\ \text{such that}\ I_s(\mu)<\infty\}.$$
\end{thm}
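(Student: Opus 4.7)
The plan is to prove both inequalities separately; in each direction Frostman's lemma is the bridge between condition~\eqref{frostman1} and the finite-energy condition $I_s(\mu) < \infty$.

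\emph{Direction $\ge$.} I would start with $\mu \in \M(A)$ having $I_s(\mu) < \infty$ and try to deduce $\dim A \ge s$. Writing $U_s\mu(x) = k_s \ast \mu(x)$, I note that $\int U_s\mu \, d\mu = I_s(\mu) < \infty$, so the level set $B_M = \{x : U_s\mu(x) \le M\}$ carries positive $\mu$-mass once $M$ is large. For $x \in B_M$ and $r > 0$ the elementary inequality $|x-y|^{-s} \ge r^{-s}$ on $B(x,r)$ yields
$$\mu\bigl(B_M \cap B(x,r)\bigr) \le r^s U_s\mu(x) \le M r^s.$$
Hence $\mu\lfloor_{B_M}/M$ is a Frostman measure on $A$ (its support still lies in $\spt\mu \subset A$), and the ``if'' direction of Frostman's lemma forces $\H^s(A) > 0$, so $\dim A \ge s$.

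\emph{Direction $\le$.} For the reverse inequality I would pick $s < t < \dim A$, observe that $\H^t(A) = \infty > 0$, and invoke Frostman's lemma to obtain $\mu \in \M(A)$ with $\mu(B(x,r)) \le r^t$. The key computation is the distribution-function identity
$$U_s\mu(x) = s \int_0^\infty r^{-s-1} \mu(B(x,r)) \, dr,$$
which I would split at $r = 1$: on $(0,1]$ I use $\mu(B(x,r)) \le r^t$ (convergent because $t > s$), while on $(1,\infty)$ I use the trivial bound $\mu(B(x,r)) \le \mu(A)$. This produces a uniform upper bound on $U_s\mu$, and integrating against $\mu$ gives $I_s(\mu) < \infty$, putting $s$ in the sup set.

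The only delicate point is the bookkeeping in the first step: I must check that $B_M$ is Borel (which follows from the lower semicontinuity of $U_s\mu$), that $\mu(B_M) \to \mu(\spt\mu) > 0$ as $M \to \infty$, and that restriction preserves the support condition. None of this is deep, but it is the single spot where one does more than string together inequalities; once it is in place, the two layer-cake style estimates above close the argument immediately.
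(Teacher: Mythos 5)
Your proposal is correct and follows exactly the route the paper intends: the paper compresses the whole argument into the phrase ``Integration of Frostman's lemma gives,'' which is precisely your two layer-cake estimates (Frostman bound $\Rightarrow$ finite energy for smaller $s$, and Chebyshev on the potential $U_s\mu$ plus restriction $\Rightarrow$ Frostman measure). The only cosmetic point worth recording is that your bound $\mu(B(x,r))\le Mr^s$ is derived for centers $x$ in $B_M$, and extending it to arbitrary centers (as the stated form of Frostman's lemma requires) costs a harmless factor $2^s$ via the inclusion $B(x,r)\subset B(z,2r)$ for any $z\in B_M\cap B(x,r)$.
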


The \emph{Fourier transform} of $\mu\in\mathcal M(\Rn)$ is
$$\widehat{\mu}(\xi)=\int e^{-2\pi i\xi\cdot x}\,d\mu x,\quad \xi\in\Rn.$$
The $s$-energy of $\mu\in\mathcal M(\Rn)$ can be written in terms of the Fourier transform:
$$I_s(\mu)=c(n,s)\int |\widehat{\mu}(x)|^2|x|^{s-n}\,dx.$$
This comes from Plancherel's theorem and the fact that the Fourier transform, in the distributional sense, of $k_s$ is a constant multiple of $k_{n-s}$. Thus we have 
\begin{equation}\label{dim}\dim A=
\sup\{s<n:\exists \mu\in\mathcal M(A)\ \text{such that}\ \int |\widehat{\mu}(x)|^2|x|^{s-n}\,dx<\infty\}.
\end{equation}

Notice that if $I_s(\mu) < \infty$, then  $|\widehat{\mu}(x)|^2 < |x|^{-s}$ for most $x$ with large norm. However, this need not hold for all $x$ with large norm.

The upper Minkowski dimension is defined by
$$\dim_MA=\inf\{s\geq 0:\lim_{\delta\to 0}\delta^{s-n}\mathcal L^n(\{x: dist(x,A)<\delta\})=0\}.$$
The packing dimension $\dim_P$ can be defined as a modification of this:
$$\dim_PA=\inf\{\sup_i\dim_MA_i:A=\bigcup_{i=1}^{\infty}A_i\}.$$
Then $\dim A\leq\dim_PA\leq\dim_MA$. We have the following product inequalities:
\begin{equation}\label{producth}
\dim A\times B \geq \dim A + \dim B.
\end{equation}
\begin{equation}\label{productm}
\dim_M A\times B \leq \dim_M A + \dim_M B.
\end{equation}
\begin{equation}\label{productp}
\dim_P A\times B \leq \dim_P A + \dim_P B.
\end{equation}

There is no Fubini theorem for Hausdorff measures, but we have the following inequality, see \cite{Fe}, 2.10.25:
\begin{pr}\label{sectprop} Let $A\subset\R^{m+n}$ and set $A_x=\{y\in\Rn: (x,y)\in A\}$ for $x\in\R^m$. Then for any non-negative numbers $s$ and $t$ ($\int^{\ast}$ is the upper integral)
$$\int^{\ast}\mathcal H^t(A_x)\,d\mathcal H^sx \leq C(m,n,s,t)\mathcal H^{s+t}(A).$$
In particular, if $\dim\{x\in\R^m: \dim A_x\geq t\}\geq s$, then $\dim A \geq s+t$.
\end{pr}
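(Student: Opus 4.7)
My plan is to follow Federer's classical covering argument (see \cite{Fe}, 2.10.25). Let $\pi:\R^{m+n}\to\R^m$ denote the projection onto the first $m$ coordinates. I may assume $\mathcal{H}^{s+t}(A)<\infty$, else the inequality is trivial. For fixed $\delta,\varepsilon>0$, I would start by choosing a countable cover $\{E_j\}$ of $A$ by sets with $d(E_j)<\delta$ and $\sum_j d(E_j)^{s+t}\le \mathcal{H}^{s+t}_\delta(A)+\varepsilon$.

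Setting $(E_j)_x=\{y\in\R^n:(x,y)\in E_j\}$, I would observe that for each $x\in\R^m$ the slice $A_x$ is covered by those $(E_j)_x$ with $x\in\pi(E_j)$, and each such set has diameter at most $d(E_j)<\delta$. Hence
\[
\mathcal{H}^t_\delta(A_x)\le\sum_j d(E_j)^t\,\mathbf{1}_{\pi(E_j)}(x).
\]
Integrating against $\mathcal{H}^s_\delta$, and noting that $\pi$ is $1$-Lipschitz so $d(\pi(E_j))\le d(E_j)<\delta$ and therefore $\mathcal{H}^s_\delta(\pi(E_j))\le d(E_j)^s$, one obtains
\[
\int^{*}\mathcal{H}^t_\delta(A_x)\,d\mathcal{H}^s_\delta(x)\le\sum_j d(E_j)^{s+t}\le \mathcal{H}^{s+t}(A)+\varepsilon.
\]

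The main obstacle will be the passage $\delta\to 0$: while the integrand $\mathcal{H}^t_\delta(A_x)$ rises monotonically to $\mathcal{H}^t(A_x)$, the outer measure $\mathcal{H}^s_\delta$ also grows, so monotone convergence does not apply directly. The remedy I would implement is to replace $\mathcal{H}^s$ by a comparable substitute—for instance a spherical or dyadic net Hausdorff measure—which differs from $\mathcal{H}^s$ by a multiplicative constant depending only on $m$ and $s$, but whose $\delta$-pre-measures can be uniformly controlled by $\mathcal{H}^s$ itself. This comparison is precisely what forces the constant $C(m,n,s,t)$ in the conclusion. Granted the integral inequality, the ``in particular'' clause follows by contraposition: for any $s'<s$ and $t'<t$, the hypothesis gives $\mathcal{H}^{s'}(F)=\infty$ with $F=\{x:\dim A_x\ge t\}$ and $\mathcal{H}^{t'}(A_x)=\infty$ for every $x\in F$, so the inequality applied at the exponents $(s',t')$ forces $\mathcal{H}^{s'+t'}(A)=\infty$; letting $s'\uparrow s$ and $t'\uparrow t$ then yields $\dim A\ge s+t$.
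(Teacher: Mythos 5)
The paper does not actually prove this proposition: it cites Federer \cite{Fe}, 2.10.25 for the inequality and Marstrand for the final clause, so there is no in-house argument to match your proposal against. That said, your covering computation is exactly the core of the classical proof and is correct: covering $A$ at scale $\delta$, slicing the cover, and using $\mathcal{H}^s_\delta(\pi(E_j))\le d(E_j)^s$ gives $\int^*\mathcal{H}^t_\delta(A_x)\,d\mathcal{H}^s_\delta x\le \mathcal{H}^{s+t}(A)+\varepsilon$ (in fact with constant $1$ in the paper's normalization of $\mathcal{H}^s$). Your deduction of the ``in particular'' clause from the integral inequality is also correct, modulo calling a direct argument a contraposition.

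The genuine gap is the step you flag yourself: passing from integration against $\mathcal{H}^s_\delta$ to integration against $\mathcal{H}^s$. Your proposed remedy, switching to a comparable net measure, is not carried out and does not obviously help: net measures are comparable to $\mathcal{H}^s$ only after the limit $\delta\to0$ has been taken, so the same double limit (integrand and premeasure both varying with $\delta$) reappears for the net premeasures, and comparability of the limiting measures alone does not justify the interchange. Two standard ways to close this. (a) Federer's Fatou-type lemma for an increasing sequence of outer measures, $\int^*\liminf_j f_j\,d(\lim_j\phi_j)\le\liminf_j\int^* f_j\,d\phi_j$, applied with $\phi_j=\mathcal{H}^s_{1/j}$ and $f_j=\sum_i d(E_{j,i})^t\mathbf{1}_{\pi(E_{j,i})}$ the explicit majorants (take the $E_{j,i}$ closed so these are Borel); this is the content of the auxiliary results Federer proves just before 2.10.25. (b) More elementarily: fix $\delta_0$ and note that $\int^*\mathcal{H}^t_{\delta_0}(A_x)\,d\mathcal{H}^s_\delta x\le\mathcal{H}^{s+t}(A)+\varepsilon$ for all $\delta\le\delta_0$; for the now-fixed integrand $h=\mathcal{H}^t_{\delta_0}(A_\cdot)$ use the layer-cake bound $\int^* h\,d\mathcal{H}^s_\delta\ge\int_0^\infty\mathcal{H}^s_\delta(\{h>\lambda\})\,d\lambda$ together with $\mathcal{H}^s_\delta(B)\uparrow\mathcal{H}^s(B)$ and Fatou in $\lambda$ to get $\int^* h\,d\mathcal{H}^s\le\mathcal{H}^{s+t}(A)+\varepsilon$, and finally let $\delta_0\to0$ using monotone convergence for the Borel regular outer measure $\mathcal{H}^s$. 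Either route finishes the proof; without one of them the argument is incomplete at precisely the point you identified.
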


The latter statement was proved by Marstrand in \cite{Ma2}.

\section{Hausdorff dimension and exceptional projections}

We shall now discuss the question: how do orthogonal projections 
affect the Hausdorff dimension? Let $0<m<n$ be integers and let $G(n,m)$ be the space of all linear $m$-dimensional subspaces of $\Rn$ and let $\gamma_{n,m}$ be the Borel probability measure on it which is invariant under the orthogonal group $O(n)$ of $\Rn$. For $V\in G(n,m)$ let $P_V:\Rn\to V$ be the orthogonal projection. 

The case $m=1$ and the lines through the origin is simpler and more concrete, and perhaps good to keep in mind. We can parametrize $G(n,1)$ and the projections onto lines by the unit sphere:
$$P_e(x) = e\cdot x,\quad x\in\Rn, e\in\Sn.$$ 
 
Here is the basic projection theorem for the Hausdorff dimension. 
The first two items of it were proved by Marstrand \cite{Ma1} in 1954 and the third by Falconer and O'Neil \cite{FO} in 1999 and by Peres and Schlag \cite{PS} in 2000:

\begin{thm}\label{projections}

Let $A\subset\R^n$ be a Borel set. 
\begin{itemize}
\item[(1)] If $\dim A \leq m$, then
\begin{equation*} \dim P_V(A) = \dim A\quad \text{for}\ \gamma_{n,m}\ \text{almost all}\ V \in G(n,m). 
\end{equation*}
\item[(2)] If $\dim A > m$, then
\begin{equation*}  \mathcal L^{m}(P_V(A)) > 0\quad \text{for}\ \gamma_{n,m}\ \text{almost all}\ V \in G(n,m).  \end{equation*}
\item[(3)] If $\dim A > 2m$, then
$P_V(A)$ has non-empty interior for $\gamma_{n,m}$ almost all $V \in G(n,m).$
\end{itemize}
\end{thm}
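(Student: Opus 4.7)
The plan is to use the energy and Fourier characterizations of Hausdorff dimension from Theorem~\ref{energy} and equation~(\ref{dim}), applied to the pushforward measures $P_{V\#}\mu$ of a compactly supported measure $\mu \in \mathcal{M}(A)$. In every part, the strategy is the same: average the relevant quantity over $V \in G(n,m)$, apply Fubini, and evaluate the inner integral by $O(n)$-invariance of $\gamma_{n,m}$.

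For part (1), fix $s < \dim A \leq m$ and pick $\mu \in \mathcal{M}(A)$ with $I_s(\mu) < \infty$. I would compute
\begin{equation*}
\int I_s(P_{V\#}\mu) \, d\gamma_{n,m}(V) = \iint \Bigl(\int |P_V(x-y)|^{-s} \, d\gamma_{n,m}(V)\Bigr) d\mu x \, d\mu y.
\end{equation*}
Rotation invariance reduces the inner integral to $c\,|x-y|^{-s}$, where $c = \int |P_V e|^{-s} \, d\gamma_{n,m}(V)$ for a fixed unit vector $e$. The distribution of $|P_V e|$ on $(0,1]$ has density controlled by a multiple of $t^{m-1}$ near the origin, so $c<\infty$ exactly when $s<m$. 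Hence $I_s(P_{V\#}\mu)<\infty$, and therefore $\dim P_V(A)\geq s$, for $\gamma_{n,m}$-almost every $V$. Letting $s \uparrow \dim A$ along a countable sequence and combining with the trivial Lipschitz upper bound $\dim P_V(A) \leq \dim A$ gives (1).

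For parts (2) and (3), I switch to the Fourier side via (\ref{dim}). The two key inputs are the identity $\widehat{P_{V\#}\mu}(\eta) = \widehat\mu(\eta)$ for $\eta \in V$ (since $\eta \cdot P_V x = \eta \cdot x$) and the polar-type Fubini formula on the Grassmannian
\begin{equation*}
\int_{G(n,m)} \int_V g(\eta)\, d\mathcal{H}^m\eta\, d\gamma_{n,m}(V) = c(n,m)\int_{\Rn} g(x)\,|x|^{m-n}\, dx,
\end{equation*}
valid for non-negative measurable $g$; this holds because the left-hand side is a rotation-invariant Radon measure on $\Rn$, which by scaling forces its density with respect to Lebesgue measure to be a constant multiple of $|x|^{m-n}$. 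For (2), pick $m < s < \dim A$ and $\mu$ with $I_s(\mu)<\infty$, and apply the formula to $g=|\widehat\mu|^2$: on $\{|x|\leq 1\}$ the bound $|\widehat\mu|\leq \mu(\Rn)$ gives convergence since $m>0$, and on $\{|x|\geq 1\}$ we use $|x|^{m-n}\leq |x|^{s-n}$ together with $I_s(\mu)<\infty$. Thus $\widehat{P_{V\#}\mu}\in L^2(V)$ for a.e.\ $V$, so $P_{V\#}\mu$ is absolutely continuous, forcing $\mathcal{L}^m(P_V(A))>0$.

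For (3), take $2m < s < \dim A$ and apply the formula to $g=|\widehat\mu|$. Again $\{|x|\leq 1\}$ is handled by boundedness of $\widehat\mu$, while on $\{|x|\geq 1\}$ Cauchy--Schwarz gives
\begin{equation*}
\int_{|x|\geq 1} |\widehat\mu(x)|\,|x|^{m-n}\, dx \leq \Bigl(\int |\widehat\mu|^2 |x|^{s-n}\, dx\Bigr)^{1/2}\Bigl(\int_{|x|\geq 1} |x|^{2m-n-s}\, dx\Bigr)^{1/2},
\end{equation*}
whose last factor is finite precisely because $s>2m$. Hence $\widehat{P_{V\#}\mu}\in L^1(V)$ for a.e.\ $V$, so by Fourier inversion $P_{V\#}\mu$ has a continuous density, which is positive on a non-empty open subset of $P_V(\spt\mu)\subset P_V(A)$. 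The main obstacle is the polar-type averaging identity on $G(n,m)$: this is the step that matches the critical exponents $s=m$ and $s=2m$ to the Riesz weight $|x|^{s-n}$ appearing in (\ref{dim}), and precisely explains why positive $m$-measure of projections corresponds to $\dim A > m$ while non-empty interior corresponds to $\dim A > 2m$.
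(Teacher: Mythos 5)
Your argument is correct and is essentially the paper's own approach (energy averaging for (1), passing to the Fourier side with $\widehat{P_{V\#}\mu}=\widehat\mu|_V$ and the $L^2$/$L^1$ dichotomy for (2) and (3)), the only difference being that you carry it out for general $m$ via the Grassmannian averaging identity $\int_{G(n,m)}\int_V g\,d\mathcal H^m\,d\gamma_{n,m}=c(n,m)\int g(x)|x|^{m-n}\,dx$, whereas the paper writes out only the $m=1$ case using the sphere parametrization and defers the general case to \cite{M6}. All the key computations (the finiteness of $c=\int|P_Ve|^{-s}d\gamma_{n,m}V$ for $s<m$, the splitting at $|x|=1$, and the Cauchy--Schwarz step requiring $s>2m$) are accurate.
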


\begin{proof}
We only prove this for $m=1$, the general case can be found in \cite{M6}. 
For $\mu\in\M(A)$, let $\mu_e\in\M(P_{e}(A))$ be the push-forward of $\mu$ under $P_{e}$: $\mu_e(B)=\mu(P_{e}^{-1}(B))$.

To prove (1) let $0<s<\dim A$ and choose by Theorem \ref{energy} a measure $\mu\in\M(A)$ such that $I_s(\mu)<\infty$. Then 
\begin{align*}
\int_{S^{n-1}}I_s(\mu_e)\,de&=\int_{S^{n-1}}\iint |P_e(x-y)|^{-s}\,d\mu x\,d\mu y\,de\\
&=\iiint_{S^{n-1}}|P_e(\tfrac{x-y}{|x-y|})|^{-s}\,de|x-y|^{-s}\,d\mu x\,d\mu y=c(s)I_s(\mu)<\infty,
\end{align*}
where for $v\in S^{n-1}$, $c(s)=\int_{S^{n-1}}|P_e(v)|^{-s}\,de<\infty$ as $s<1$. The finiteness of this integral follows from the simple inequality
\begin{equation}\label{projineq}
\mathcal H^{n-1}(\{e\in \Sn: |P_e(x)|\leq\delta\})\lesssim \delta/|x|\quad \text{for}\ x\in\Rn\setminus\{0\}, \delta>0.
\end{equation}
Referring again to Theorem \ref{energy} we see that $\dim P_{e}(A)\geq s$ for almost all $e\in S^{n-1}$. By the arbitrariness of 
$s, 0<s<\dim A$, we obtain $\dim P_{e}(A)\geq\dim A$ for almost all $e\in S^{n-1}$. The opposite inequality follows from the fact that the projections are Lipschitz mappings. 

To prove (2) choose by (\ref{dim}) a measure $\mu\in\M(A)$ such that $\int|x|^{1-n}|\widehat{\mu}(x)|^2\,dx<\infty.$
Directly from the definition of the Fourier transform we see that $\widehat{\mu_e}(t)=\widehat{\mu}(te)$ for $t\in\R, e\in S^{n-1}$. Integrating in polar coordinates we obtain
$$\int_{S^{n-1}}\int_{-\infty}^{\infty}|\widehat{\mu_e}(t)|^2\,dt\,de=2\int_{S^{n-1}}\int_0^{\infty}|\widehat{\mu}(te)|^2\,dt\,de=2\int|x|^{1-n}|\widehat{\mu}(x)|^2\,dx<\infty.$$
Thus for almost all $e\in S^{n-1}$, $\widehat{\mu_e}\in L^2(\R)$ which means that $\mu_e$ is absolutely continuous with $L^2$ density and hence $\mathcal L^1(p_{e}(A))>0$.

For the proof of (3) one takes  $2 < s < \dim A$ and $\mu\in\M(A)$ such that $I_s(\mu)<\infty$, whence $\int|x|^{s-n}|\widehat{\mu}(x)|^2\,dx<\infty.$ Then as above and by the Schwartz inequality
\begin{align*}
\int_{S^{n-1}}\int_{|t|\geq 1}|\widehat{\mu_e}(t)|\,dt\,de = 2\int_{|x|\geq 1}|x|^{1-n}|\widehat{\mu}(x)|\,dx\\
\leq 2\left(\int_{|x|\geq 1}|x|^{2-s-n}\,dx\int_{|x|\geq 1}|x|^{s-n}|\widehat{\mu}(x)|^2\,dx\right)^{1/2}<\infty
\end{align*}
since $2-s-n < -n$. Thus for almost all $e\in S^{n-1}$, $\widehat{\mu_e}\in L^1(\R)$ which implies that $\mu_e$ is absolutely continuous with continuous density. Hence $P_e(A)$ has non-empty interior.
\end{proof}

Part (2) can rather easily be proven also without the Fourier transform using again inequalities like \eqref{projineq}, see the proof of  Theorem 9.7 in \cite{M4}. Parts (1) and (2) of Theorem \ref{projections} hold with $\gamma_{n,m}$ replaced with any Borel   measure $\gamma$ on $G(n,m)$ which satisfies
$$\gamma(\{V\in G(n,m): |P_V(x)|\leq\delta\})\lesssim (\delta/|x|)^m\quad \text{for}\ x\in\Rn\setminus\{0\}, \delta>0.$$
We shall discuss this a bit more later. I don't know any proof for (3) without the Fourier transform.

The conditions $\dim A \leq m$ and $\dim A > m$ in (1) and (2) are of course necessary. The condition $\dim A > 2m$ in (3) is necessary if $m=1$. I don't know if it is necessary when $m>1$. In the case $m=1$ the example in the plane can be obtained with Besicovitch sets, first in the plane, showing that there is no theorem in the plane, and then taking cartesian products. More precisely, let $B\subset \R^2$ be a Borel set of measure zero which contains a line in every direction. We shall construct such sets in Section 7. Let $A=\R^2\setminus\cup_{q\in\Q^2}(B+q)$, where $\Q^2$ is the countable dense set with rational coordinates. Then $A$ has full Lebesgue measure and none of its projections has interior points.

In this section we shall discuss how much more one can say about the size of the sets of exceptional planes. Kaufman \cite{Ka} proved in 1968 the first item of the following theorem in the plane (generalized in \cite{M1}), Falconer \cite{F2} in 1982 the second and Peres and Schlag \cite{PS} in 2000 the third. Recall that the dimension of $G(n,m)$ is $m(n-m)$. To get a better feeling of this notice that in the case $m=1$ the three upper bounds are $n-2+\dim A, n-\dim A$ and $n+1-\dim A$.

\begin{thm}\label{excproj}

Let $A\subset\R^n$ be a Borel set. 
\begin{itemize}
\item[(1)] If $\dim A \leq m$, then
\begin{equation*} \dim\{V\in G(n,m): \dim P_{V}(A) < \dim A\} \leq m(n-m) - m + \dim A. 
\end{equation*}
\item[(2)] If $\dim A > m$, then
\begin{equation*}  \dim\{V\in G(n,m): \mathcal L^{m}(P_{V}(A))=0\} \leq m(n-m)+m-\dim A.  \end{equation*}
\item[(3)] If $\dim A > 2m$, then 
\begin{equation*}
\dim\{V\in G(n,m): \Int(P_{V}(A))=\emptyset\} \leq m(n-m)+2m-\dim A.  \end{equation*}
\end{itemize}
\end{thm}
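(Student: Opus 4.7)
The plan is to prove all three parts by a common template: imitate the proof of Theorem~\ref{projections} with the uniform measure $\gamma_{n,m}$ replaced by a Frostman measure $\nu$ concentrated on (a compact subset of) the supposed exceptional set. For each item I assume for contradiction that the exceptional set has dimension strictly greater than the claimed bound and fix $\nu\in\M(G(n,m))$ of some dimension $u$ exceeding that bound, i.e.\ $\nu(B(V,r))\le r^u$ for all $V\in G(n,m)$ and $r>0$.

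The central technical input is a pair of Grassmannian tube estimates generalising \eqref{projineq}: for $x\in\Rn\setminus\{0\}$ and $\delta>0$,
\begin{align*}
\nu\bigl(\{V\in G(n,m): |P_V x|\le\delta\}\bigr) &\lesssim (\delta/|x|)^{u-m(n-m)+m},\\
\nu\bigl(\{V\in G(n,m): \dist(x,V)\le\delta\}\bigr) &\lesssim (\delta/|x|)^{u-(m-1)(n-m)}.
\end{align*}
Both come from a covering argument in $G(n,m)$: the subvarieties $\{V:P_V x=0\}\cong G(x^\perp,m)$ and $\{V: x\in V\}\cong G(x^\perp,m-1)$ have codimensions $m$ and $n-m$ respectively, so their $\delta/|x|$-neighbourhoods are covered by $\sim(|x|/\delta)^{m(n-m)-m}$ and $\sim(|x|/\delta)^{(m-1)(n-m)}$ balls of radius $\delta/|x|$, and each such ball has $\nu$-mass $\lesssim(\delta/|x|)^u$. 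For $m=1$ and $\nu=\mathcal H^{n-1}$ on $\Sn$ (so $u=n-1$) these recover \eqref{projineq} and its dual.

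For part (1), I choose $s$ slightly below $\dim A$, $\mu\in\M(A)$ with $I_s(\mu)<\infty$, and $\nu$ Frostman of dimension $u>m(n-m)-m+s$ on $E_1=\{V:\dim P_V A<\dim A\}$. Since $u-m(n-m)+m>s$, the first tube estimate yields by Cavalieri
$$\int |P_V(x-y)|^{-s}\,d\nu(V)\lesssim |x-y|^{-s}\quad\text{uniformly in $x,y$,}$$
so Fubini gives $\int I_s(\mu_V)\,d\nu(V)\lesssim I_s(\mu)<\infty$, whence $\dim P_V A\ge s$ for $\nu$-a.e.\ $V$, contradicting $V\in E_1$ once $s$ is close enough to $\dim A$. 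Parts (2) and (3) are handled on the Fourier side: picking $s<\dim A$ with $\int|\widehat\mu(\xi)|^2|\xi|^{s-n}\,d\xi<\infty$, using $\widehat{\mu_V}(\xi)=\widehat\mu(\xi)$ for $\xi\in V$, and applying Fubini rewrites
$$\int_{G(n,m)}\!\int_V|\widehat\mu(\xi)|^2\,d\mathcal H^m\xi\,d\nu(V)=\int_{\Rn}|\widehat\mu(\xi)|^2\,d\lambda(\xi),\qquad \lambda=\int\mathcal H^m|_V\,d\nu(V).$$
The second tube estimate controls the angular marginal of $\lambda$ on $\Sn$ by a Frostman measure of dimension $u-(m-1)(n-m)+m-1$, and a direct comparison against the weight $|\xi|^{s-n}d\xi$ bounds the above Fourier integral by $cI_s(\mu)<\infty$ whenever $u>m(n-m)+m-s$, yielding (2) since $\mu_V$ then has $L^2$ density and hence $\mathcal L^m(P_V A)>0$ for $\nu$-a.e.\ $V$. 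Part (3) is the same scheme combined with a Schwarz step exactly as in the proof of Theorem~\ref{projections}(3), reducing to $\widehat{\mu_V}\in L^1(V)$ and giving the tighter numerology $u>m(n-m)+2m-s$.

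The main obstacle is the last step of (2) and (3): the angular Frostman exponent coming from the second tube estimate must be matched against the radial factor $t^{m-1}$ and the Fourier energy weight $|\xi|^{s-n}$ to yield exactly the claimed bounds. This is the bookkeeping that the Peres--Schlag method carries out carefully; the geometric content — that the codimensions of $\{P_V x=0\}$ and $\{x\in V\}$ in $G(n,m)$ are exactly $m$ and $n-m$ — is what forces the precise shifts $-m+\dim A$, $+m-\dim A$, and $+2m-\dim A$ in the three parts of the statement.
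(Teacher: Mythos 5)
Your treatment of part (1) coincides with the paper's: the paper proves (1) by exactly the replacement of $\gamma_{n,m}$ by a Frostman measure $\nu$ on the exceptional set, and your first tube estimate $\nu(\{V:|P_Vx|\le\delta\})\lesssim(\delta/|x|)^{u-m(n-m)+m}$ is precisely the paper's inequality \eqref{projineq1} (note $u-m(n-m)+m=u-m(n-m-1)$), derived by the same covering count on the codimension-$m$ subvariety $\{V:P_Vx=0\}$. The only cosmetic imprecision is the phrase ``contradicting $V\in E_1$''; one should decompose $E_1=\bigcup_k\{V:\dim P_V(A)<s_k\}$ with $s_k\uparrow\dim A$ and apply the argument to each piece, but that is routine.

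For parts (2) and (3) the paper gives no proof (it defers to \cite{M6}, remarking only that these parts are ``trickier and require the use of the Fourier transform''), so let me assess your sketch on its own. The architecture and the numerology are right: the second tube estimate does make the angular measure $\tilde\sigma=\int\mathcal H^{m-1}|_{\Sn\cap V}\,d\nu V$ Frostman of exponent $u-(m-1)(n-m)+m-1$, and that exponent does produce exactly the bounds $m(n-m)+m-s$ and $m(n-m)+2m-s$. But the step you describe as ``a direct comparison against the weight $|\xi|^{s-n}d\xi$'' is a genuine gap, not bookkeeping. The measure $\lambda=\int\mathcal H^m|_V\,d\nu V$ is singular (supported on a Lebesgue-null union of $m$-planes when $\nu$ is lower-dimensional), and $\int|\widehat\mu|^2\,d\lambda$ cannot be majorized by $\int|\widehat\mu(\xi)|^2|\xi|^{s-n}d\xi$ by any pointwise or measure-theoretic comparison: the hypothesis $I_s(\mu)<\infty$ controls $|\widehat\mu|^2$ only on Lebesgue-average, and $\lambda$ could in principle sit exactly where $|\widehat\mu|^2$ is large --- this is the very caveat the paper states after \eqref{dim}. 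The missing idea is to exploit the compact support of $\mu$: writing $\mu=\varphi\mu$ with $\varphi\in C_c^\infty$ equal to $1$ on $\spt\mu$ gives $|\widehat\mu|^2\le\|\widehat\varphi\|_1\,(|\widehat\varphi|\ast|\widehat\mu|^2)$, and since $\lambda\ast|\widehat\varphi|$ is absolutely continuous with density $\lesssim|\xi|^{m-1-\kappa}$ (with $\kappa$ the angular Frostman exponent above), only after this mollification does the comparison with $|\xi|^{s-n}$ close the argument, yielding finiteness precisely when $u>m(n-m)+m-s$. The same smoothing is needed before the Cauchy--Schwarz step in (3). So: (1) is correct and identical to the paper's argument; (2) and (3) have the right skeleton but omit the one analytic idea that makes the Fourier-side estimate legitimate.
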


The proof of (1) is a rather simple modification of the proof of the corresponding part in Theorem \ref{projections}; essentially one just replaces the measure $\gamma_{n.m}$ with a Frostman measure $\nu$ on the exceptional set. The key observation is that instead of \eqref{projineq} we now have
\begin{equation}\label{projineq1}
\nu(\{V\in G(n,m): |P_V(x)|\leq\delta\})\lesssim (\delta/|x|)^{s-m(n-m-1)}
\end{equation}
which easily follows from the Frostman condition $\nu(B(V,r))\leq r^s$, cf. \cite{M6}, (5.10) and (5.12). 
 The proofs of (2) and (3) are trickier and require the use of the Fourier transfom. They can be found in \cite{M6}. 
 
Theorem \ref{excproj} and much more, for instance exceptional set estimates for Bernoulli convolutions, is included in the setting of generalized projections developed by Peres and Schlag in \cite{PS}. Later these general estimates have been improved in many special cases. 

The bounds in (1) and (2) are sharp by the examples which Kaufman and I constructed in 1975 in \cite{KM}. I don't know if the bound in (3) is sharp. Another, seemingly very difficult, problem is estimating the dimension of the set in (1) when $\dim A$ is replaced by some $u < \dim A$. We still have by the same proof
\begin{equation*} \dim\{V\in G(n,m): \dim P_{V}(A) < u\} \leq m(n-m) - m + u, 
\end{equation*}
but this probably is not sharp when $u<\dim A$. In any case it is far from sharp in the plane when $u=\dim A/2$:

\begin{thm}\label{bour}
Let $A\subset\R^2$ be a Borel set. Then
\begin{equation} \dim\{e\in S^1: \dim P_{e}(A) \leq \dim A/2\} = 0. 
\end{equation}
\end{thm}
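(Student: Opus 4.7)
The plan is to argue by contradiction: assume the exceptional set $E:=\{e\in S^1:\dim P_e(A)\leq s/2\}$ has Hausdorff dimension $>0$, where $s:=\dim A$, and reach a contradiction with Bourgain's discretized projection theorem in $\R^2$. By Frostman's lemma pick $s'<s$ close to $s$, some $\kappa>0$, and measures $\mu\in\M(A)$, $\nu\in\M(E)$ with $\mu(B(x,r))\leq r^{s'}$, $\nu(B(e,r))\leq r^{\kappa}$, and $\spt\mu\subset B(0,1)$. For each $e\in E$, the condition $\dim P_e(A)\leq s/2$ forces $P_e\mu$ to have dimension at most $s/2$, so at arbitrarily small scales $\delta>0$ there is a $\delta$-ball carrying $P_e\mu$-mass greater than $\delta^{s'/2+\eta}$, once $s'$ is chosen close enough to $s$ that $s'/2+\eta>s/2$; here $\eta>0$ is the constant supplied by the next step.

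The main ingredient is Bourgain's discretized projection theorem: for each $\alpha\in(0,2)$ and $\kappa>0$ there exist $\eta=\eta(\alpha,\kappa)>0$ and $\varepsilon=\varepsilon(\alpha,\kappa)>0$ such that for every pair $(\mu,\nu)$ of Frostman measures as above with exponents $\alpha$ and $\kappa$ and every sufficiently small $\delta$,
\[
\nu\bigl(\bigl\{e\in S^1:\exists\,x\in\R,\ P_e\mu\bigl(B(x,\delta)\bigr)>\delta^{\alpha/2+\eta}\bigr\}\bigr)\leq\delta^\varepsilon.
\]
Let $E_k$ be the set on the left at scale $\delta=2^{-k}$, taking $\alpha=s'$. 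The previous paragraph shows $E\subset\bigcup_{k\geq N}E_k$ for every $N$, so by countable subadditivity
\[
\nu(E)\leq\sum_{k\geq N}\nu(E_k)\leq\sum_{k\geq N}2^{-k\varepsilon}\to 0\quad\text{as}\ N\to\infty,
\]
contradicting $\nu(E)>0$; hence $\dim E=0$.

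The Frostman-and-subadditivity reduction above is entirely routine; all the depth lies in Bourgain's discretized projection theorem. Its proof proceeds through the discretized sum--product / Bourgain--Katz--Tao ring theorem, which excludes approximate subrings of $\R$ of intermediate dimension, and this is the essential source of the strict improvement over the Kaufman--Mattila bound $\dim\{V:\dim P_V(A)<u\}\leq u$ afforded by Theorem \ref{excproj}(1). This is the only hard step; for a survey I would simply quote it as a black box.
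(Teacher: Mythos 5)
The paper does not actually prove this theorem --- it is quoted from Bourgain's work \cite{B3}, \cite{B4} --- so reducing it to a discretized black box is the right strategy in spirit. But the black box you state is false, and the falsity occurs exactly where your argument needs it. Take $\mu$ to be arclength measure on the unit circle, which satisfies $\mu(B(x,r))\lesssim r^{\alpha}$ with $\alpha=1$. For every $e\in S^1$ the projection $P_e\mu$ has density comparable to $(1-t^2)^{-1/2}$ on $[-1,1]$, so $P_e\mu(B(1,\delta))\sim\sqrt{\delta}=\delta^{\alpha/2}>\delta^{\alpha/2+\eta}$ for every direction $e$ and every small $\delta>0$. Hence the set whose $\nu$-measure you claim is at most $\delta^{\varepsilon}$ is all of $S^1$. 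Note the circle is no counterexample to the theorem itself ($\dim P_e(A)=1$ for all $e$ there); it shows that a single heavy $\delta$-ball does not witness small dimension of a projection. The correct discretized statement controls not single balls but Borel sets $F$ of covering number $N(F,\delta)\leq\delta^{-\alpha/2-\eta}$, and its conclusion is $P_e\mu(F)\leq\delta^{\varepsilon_0}$ for some small $\varepsilon_0>0$, not $P_e\mu(F)\leq\delta^{\alpha/2+\eta}$.

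Consequently the first step of your reduction must also change. From $\dim P_e(A)\leq s/2<\tau:=s'/2+\eta$ you correctly deduce $\mathcal{H}^{\tau}(P_e(A))=0$ via the mass distribution principle, but what you need to extract from an efficient cover $\{I_j\}$ with $\sum_j d(I_j)^{\tau}$ small is, by pigeonholing over dyadic scales, a single scale $\delta$ and a union $F$ of $\delta$-intervals with $N(F,\delta)\lesssim\delta^{-\tau}$ and $P_e\mu(F)\gtrsim(\log(1/\delta))^{-2}\geq\delta^{\varepsilon_0}$; that is the object the true theorem forbids for $e$ in the good set, and only after this extra step does your countable subadditivity argument over $k$ go through. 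A further small point: $\eta$ depends on $\alpha=s'$, so ``choose $s'$ close enough to $s$ that $s'/2+\eta>s/2$'' is circular as written; one needs the (true, but worth stating) fact that $\eta(\alpha,\kappa)$ can be taken uniform for $\alpha$ in compact subintervals of $(0,2)$.
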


To get some idea where $\dim A/2$ comes from, notice that the inequality $\dim_M P_{e}(A) < \dim_M A/2$ is very easy for the upper Minkowski dimension 
(and also for the packing dimension), and even more is true: there can be at most one direction $e$ for which  $\dim_M P_{e}(A) < \dim_M A/2$. That there cannot be two orthogonal directions follows immediately from the product inequalities \eqref{productm} and \eqref{productp}, and the general case is also easy. However for the Hausdorff dimension the exceptional set can always be uncountable, even more: Orponen constructed in \cite{O3}, Theorem 1.5, a compact set $A\subset\R^2$ such that $\mathcal H^1(A)>0$ and $\dim\{e\in S^1: \dim P_e(A) =0\}$ is a dense $G_{\delta}$ subset of $S^1$. This paper also contains many exceptional set estimates for projections and packing dimension.

Theorem \ref{bour} is due to Bourgain, \cite{B3}, \cite{B4}. Bourgain's result is more general and it includes a deep discretized version. The proof uses methods of additive combinatorics. D. M. Oberlin gave a simpler Fourier-analytic proof in \cite{Ob2}, but with $\dim P_{e}(A) \leq \dim A/2$ replaced by $\dim P_{e}(A) < \dim A/2$. Using combinatorial methods He \cite{H} proved analogous higher dimensional results. 

More generally, it might be true, and has been conjectured by Oberlin \cite{Ob2}, that Kaufman's estimate

\begin{equation}\label{kauf}  \dim\{e\in S^1: \dim P_e(A) < u\} \leq u  
\end{equation}
could be extended for $\dim A/2\leq u \leq \dim A$ to
\begin{equation}\label{Ob}  \dim\{e\in S^1: \dim P_e(A) < u\} \leq 2u -\dim A.  
\end{equation}

This would be sharp, as the constructions in \cite{KM} show. Theorem \ref{bour} is the only case where this is known. However, Orponen improved in the plane  Theorem in \ref{excproj} in \cite{O6} and \cite{O7} for sets $A$ with $\dim A=1$ but with $\dim P_e(A)$ replaced by the packing dimension of $P_e(A)$: for $0<t<1$ there is $\epsilon(t)>0$ such that
\begin{equation}\label{O2}
\dim\{e\in S^1: \dim_P P_e(A) < t\} \leq t -\epsilon(t).
\end{equation}

The following generalization of parts (1) and (2) of Theorem \ref{projections} tells us that that a null set of projections can be found first and then the statements hold outside these exceptions for all subsets of positive measure. Statement (2) is due to Marstrand \cite{Ma1}. It means that the pushforward under $P_V$ of the restriction of $\mathcal H^s$ to $A$ is absolutely continuous for almost all $V\in G(n,m)$, recall the proof of Theorem \ref{projections}(2). Part (1) was proved by Falconer and the author in \cite{FM}.

\begin{thm}\label{marthm}
Let  $A \subset \mathbb{R}^n$ be an $\mathcal H^s$-measurable set with $0<\mathcal H^s(A) < \infty$. Then there exists a Borel set $E\subset G(n,m)$ with $\gamma_{n,m}(E) = 0$ such that for all $V \in G(n,m)\setminus E$ and all 
$\mathcal H^s$-measurable sets $B\subset A$ with $\mathcal H ^s(B)>0$,

${\rm (1)}$ if $s\leq m$ then $\dim P_V(B) = s$,
\smallskip

${\rm (2)}$  if $s >m$ then $\mathcal L^m (P_V(B)) > 0$.
\smallskip

\end{thm}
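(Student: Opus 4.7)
The plan is to reduce both parts to Theorem~\ref{projections} applied not to a single measure on $A$, but to a countable Frostman decomposition of $A$, so that a single $\gamma_{n,m}$-null exceptional set emerges as a countable union. Since $0<\mathcal H^s(A)<\infty$, the standard upper density estimate yields
$$\limsup_{r\to 0}r^{-s}\mathcal H^s(A\cap B(x,r))\leq 2^s\quad\text{for }\mathcal H^s\text{-a.e. }x\in A.$$
Setting $A_k=\{x\in A:\mathcal H^s(A\cap B(x,r))\leq 2^sr^s\text{ for all }0<r\leq 1/k\}$, one has $A=N\cup\bigcup_k A_k$ with $\mathcal H^s(N)=0$, and a short $5r$-covering argument upgrades this bound to a pointwise Frostman inequality $\mu_k(B(x,r))\leq C_kr^s$ valid for every $x\in\R^n$ and $r>0$, where $\mu_k$ denotes the restriction of $\mathcal H^s$ to $A_k$. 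In particular $I_t(\mu_k)<\infty$ for every $0<t<s$.

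Next I would run the proofs of Theorem~\ref{projections}(1) and (2) once on each $\mu_k$, writing $\mu_{k,V}$ for the push-forward of $\mu_k$ under $P_V$. For (1), the averaging computation together with the $m$-dimensional analogue of \eqref{projineq} yields, for each rational $t<s$, a $\gamma_{n,m}$-null set $E_{k,t}\subset G(n,m)$ outside which $I_t(\mu_{k,V})<\infty$. For (2), since $m<s$ we have $I_m(\mu_k)<\infty$, and the Fourier identity
$$\int_{G(n,m)}\int_V|\widehat{\mu_{k,V}}(\xi)|^2\,d\mathcal H^m\xi\,d\gamma_{n,m}V\;=\;c(n,m)\int_{\R^n}|x|^{m-n}|\widehat{\mu_k}(x)|^2\,dx\;<\;\infty$$
produces a $\gamma_{n,m}$-null set $E_k\subset G(n,m)$ outside which $\mu_{k,V}$ is absolutely continuous with $L^2$ density.

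Finally, let $E$ be the (countable) union of all these null sets; then $\gamma_{n,m}(E)=0$. For $V\notin E$ and any $\mathcal H^s$-measurable $B\subset A$ with $\mathcal H^s(B)>0$, some $B\cap A_k$ has positive $\mathcal H^s$-measure, so the restriction of $\mu_k$ to $B$ is a nontrivial measure in $\mathcal M(B)$ whose $P_V$-push-forward is dominated by $\mu_{k,V}$. In case (1) this push-forward has finite $t$-energy for every rational $t<s$, whence $\dim P_V(B)\geq s$ (the reverse inequality is Lipschitz-trivial); in case (2) it is absolutely continuous and has positive total mass carried by $P_V(B)$, forcing $\mathcal L^m(P_V(B))>0$. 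The only step demanding work beyond Section~3 is the upgrade from the limsup density estimate on $A_k$ to a genuine Frostman bound valid for all $x\in\R^n$; this is where the hypothesis $\mathcal H^s(A)<\infty$ is essential, and it is the mechanism that converts the ``exceptional set depends on $B$'' statement of Theorem~\ref{projections} into the uniform version asserted here.
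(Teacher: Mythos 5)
Your argument is correct and follows the route the paper itself indicates: the survey gives no proof of Theorem \ref{marthm}, deferring to \cite{Ma1} and \cite{FM}, but its remark that the statement ``means that the pushforward under $P_V$ of the restriction of $\mathcal H^s$ to $A$ is absolutely continuous for almost all $V$'' is precisely your mechanism --- prove the projection statements once for the fixed measures $\mathcal H^s\lfloor A_k$ (made Frostman by the density decomposition) and pass to an arbitrary $B\subset A$ of positive measure by domination of measures, which preserves both finiteness of $I_t$ and absolute continuity, and carries positive mass inside $P_V(B)$. The one slip is in the definition of $A_k$: from $\limsup_{r\to 0}r^{-s}\mathcal H^s(A\cap B(x,r))\leq 2^s$ one may only conclude $\mathcal H^s(A\cap B(x,r))\leq (2^s+\varepsilon)r^s$ for all sufficiently small $r$, so the constant in the definition of $A_k$ must be taken strictly larger than $2^s$ (say $2^{s+1}$) in order that $\mathcal H^s\bigl(A\setminus\bigcup_k A_k\bigr)=0$; with that adjustment, and passing to a compact subset of $B\cap A_k$ of positive measure so that the push-forward lies in $\mathcal M(P_V(B))$, the proof is complete.
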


The sharper version in the spirit of Theorem \ref{excproj} is also valid, see \cite{FM}.

In the next section the following theorem will give us information about exceptional plane slices. It was proved by Orponen and the author in \cite{MO}:

\begin{thm}\label{main1}
Let $A$ and $B$ be Borel subsets of $\Rn$.
\begin{itemize}
\item[(1)] If $\dim A > m$ and  $\dim B > m$, then 
$$\gamma_{n,m}\left(\{V \in G(n,m): \mathcal L^{m}(P_V(A)\cap P_V(B))>0\}\right)>0.$$
\item[(2)] If $\dim A > 2m$ and  $\dim B > 2m$, then 
$$\gamma_{n,m}\left(\{V \in G(n,m): \Int(P_V(A)\cap P_V(B))\not=\emptyset\}\right)>0.$$
\item[(3)] If $\dim A > m, \dim B \leq m$ and $\dim A + \dim B > 2m$, then for every $\epsilon > 0$,
$$\gamma_{n,m}\left(\{V \in G(n,m): \dim(P_V(A)\cap P_V(B))>\dim B - \epsilon\}\right)>0.$$
\end{itemize}
\end{thm}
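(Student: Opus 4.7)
The plan is to treat all three parts by a unified Fourier/energy scheme. Choose Frostman measures $\mu \in \M(A)$ and $\nu \in \M(B)$ with $I_s(\mu), I_t(\nu) < \infty$ for exponents adapted to the case, and study the push-forwards $\mu_V := P_{V\#}\mu$, $\nu_V := P_{V\#}\nu$. The essential tool is the polar-type identity
\begin{equation*}
\int_{G(n,m)} \int_V h(\xi)\, d\mathcal L^m(\xi)\, d\gamma_{n,m}(V) = c_{n,m}\int_{\Rn} h(\xi)\, |\xi|^{m-n}\, d\xi,
\end{equation*}
combined with Parseval on each $V$ and the identity $\widehat{\mu_V}(\xi) = \widehat\mu(\xi)$ for $\xi \in V$.

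For (1), pick $m < s < \dim A$ and $m < t < \dim B$. The argument in the proof of Theorem \ref{projections}(2) gives $L^2$ densities $f_V, g_V$ of $\mu_V, \nu_V$ for $\gamma_{n,m}$-a.e.\ $V$, with supports contained in $P_V(A)$ and $P_V(B)$ respectively; it therefore suffices to exhibit a positive-measure set of $V$ on which $\int f_V g_V > 0$. Parseval on $V$ and the polar identity yield
\begin{equation*}
\int_{G(n,m)}\int_V f_V g_V\, d\mathcal L^m\, d\gamma_{n,m}(V) = c\int_{\Rn} \widehat\mu(\xi)\,\overline{\widehat\nu(\xi)}\,|\xi|^{m-n}\, d\xi = c'\iint |x-y|^{-m}\, d\mu(x)\, d\nu(y),
\end{equation*}
using that $|\xi|^{m-n}$ is a multiple of the Fourier transform of $|x|^{-m}$ together with a final Parseval step. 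The right side is strictly positive, and is finite because the Frostman bound with exponent $s > m$ makes the $m$-potential $y\mapsto\int|x-y|^{-m}\,d\mu(x)$ uniformly bounded. Hence the left side is positive, proving (1). Part (2) is the same argument with $s,t > 2m$: the argument in the proof of Theorem \ref{projections}(3) then makes $f_V, g_V$ continuous for a.e.\ $V$, so $\int f_V g_V > 0$ combined with continuity forces $\{f_V g_V > 0\}$ to be a nonempty open subset of $P_V(A)\cap P_V(B)$.

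Part (3) requires more work. The hypotheses let us pick $m < s < \dim A$ and $t < \dim B$ with $s + t > 2m$. For an approximate identity $\psi_\delta$ on $V$, define the finite nonnegative measure
\begin{equation*}
\sigma_V^\delta = (\mu_V\ast\psi_\delta)\, d\nu_V
\end{equation*}
on $V$, supported in a $\delta$-neighborhood of $P_V(A)\cap P_V(B)$. The plan is to establish, uniformly in $\delta > 0$, two bounds
\begin{equation*}
\int_{G(n,m)}\sigma_V^\delta(V)\, d\gamma_{n,m}(V)\,\gtrsim\,1\quad\text{and}\quad\int_{G(n,m)}I_u(\sigma_V^\delta)\, d\gamma_{n,m}(V)\,\leq\,C
\end{equation*}
for some fixed $u > \dim B - \epsilon$. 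The mass bound reduces, via the same computation as in (1), to the finiteness of $\iint|a-b|^{-m}\, d\mu(a)\, d\nu(b)$, which holds because $s > m$. For the energy bound, expand $I_u(\sigma_V^\delta)$ as a quadruple integral over $\mu\otimes\mu\otimes\nu\otimes\nu$ with kernel $|P_V(b-b')|^{-u}\psi_\delta(P_V(b-a))\psi_\delta(P_V(b'-a'))$, integrate in $V$, and apply Grassmannian sublevel-set estimates such as $\gamma_{n,m}(\{V:|P_V(z)|<\delta\})\lesssim(\delta/|z|)^m$ together with pointwise bounds on $\psi_\delta$; this reduces the question to the finiteness of an iterated Riesz-type integral on $(\Rn)^4$, whose numerology closes precisely when $s+t>2m$ (and which also uses $u<t\leq m$). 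Standard weak-$*$ compactness along $\delta_j\to 0$ then extracts a limit $\sigma_V$ on $P_V(A)\cap P_V(B)$ with positive mass and $I_u(\sigma_V)<\infty$ for $V$ in a set of positive $\gamma_{n,m}$-measure, and Theorem \ref{energy} gives $\dim(P_V(A)\cap P_V(B))\geq u>\dim B-\epsilon$.

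The main obstacle is the uniform-in-$\delta$ energy estimate in (3): the three $V$-dependent factors are correlated through the geometry of $b-a$, $b'-a'$, and $b-b'$, so the Grassmannian estimates must be deployed with care to show that the $\delta$-scaling cancels and that the surviving kernel on $(\Rn)^4$ is integrable against $\mu\otimes\mu\otimes\nu\otimes\nu$ exactly under the combined hypothesis. A secondary but real point is verifying that the weak-$*$ limit is supported on the intersection itself and retains positive mass; both follow from the lower mass bound and the nested structure of the supports of $\mu_V\ast\psi_\delta$ and $\nu_V$.
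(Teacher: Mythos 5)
Your parts (1) and (2) are sound and follow the paper's own route: the paper proves (1) for $m=1$ by exactly your computation (Parseval on each fibre, the polar identity, and the reduction to the mutual energy $\iint|x-y|^{-m}\,d\mu\,d\nu>0$), and your (2) is the natural upgrade using continuous densities, including the correct observation that $\{f_Vg_V>0\}$ is a nonempty open subset of $P_V(A)\cap P_V(B)$ because it sits inside $\spt\mu_V\cap\spt\nu_V$.

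Part (3), however, contains a genuine gap rather than a proof. Your scheme --- mollify $\mu_V$, weight $\nu_V$ by $\mu_V\ast\psi_\delta$, and run a mass/energy argument uniformly in $\delta$ --- is the right one (it is essentially what Mattila--Orponen do in the cited reference), but the entire difficulty of part (3) is concentrated in the estimate you defer, namely
\begin{equation*}
\int_{G(n,m)}\iiiint |P_V(b-b')|^{-u}\,\psi_\delta(P_V(b-a))\,\psi_\delta(P_V(b'-a'))\,d\mu(a)\,d\mu(a')\,d\nu(b)\,d\nu(b')\,d\gamma_{n,m}(V)\leq C,
\end{equation*}
and you do not carry it out: you assert that ``the numerology closes precisely when $s+t>2m$,'' which is the content of the theorem, not a verification of it. The three factors depend on the same $V$ through the correlated vectors $b-a$, $b'-a'$, $b-b'$, so one cannot simply apply the sublevel bound $\gamma_{n,m}(\{V:|P_V(z)|<\delta\})\lesssim(\delta/|z|)^m$ to each factor separately, and a naive Cauchy--Schwarz decoupling in $V$ squares the exponent $u$ and destroys the numerology; some genuine case analysis (or a Fourier-side trilinear computation, which is what the original proof uses) is needed to make the $\delta^{-2m}$ from the two mollifiers cancel against the two sublevel sets while keeping the $|b-b'|^{-u}$ factor summable against $\nu\otimes\nu$ using $u<t$. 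A secondary, smaller gap: the lower mass bound $\int_G\sigma_V^\delta(V)\,d\gamma\gtrsim1$ only gives, for each fixed $\delta$, a positive-measure set of $V$ with $\sigma_V^\delta(V)$ bounded below, and you need a uniform second-moment or Fatou-type argument to produce a single positive-measure set of $V$ on which the weak-$\ast$ limits retain positive mass and finite $u$-energy simultaneously. Until the displayed estimate is actually proved, part (3) is a plan, not a proof.
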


\begin{proof}
I only prove (1) when $m=1$. 
Choose by (\ref{dim}) $\mu\in\M(A)$ and $\nu\in\M(B)$ such that $\int|x|^{1-n}|\widehat{\mu}(x)|^2\,dx<\infty$ and $\int|x|^{1-n}|\widehat{\nu}(x)|^2\,dx<\infty.$
Let again $\mu_e\in\M(P_{e}(A))$ and $\nu_e\in\M(P_{e}(B))$ be the push-forwards of $\mu$ and $\nu$ under $P_{e}$. We know from the proof of Theorem \ref{projections} that for almost all $e\in S^{n-1}$, $\mu_e$ and $\nu_e$ are absolutely continuous with $L^2$ densities. Thus as in the proof of Theorem \ref{projections} and by Plancherel's theorem, 
\begin{align*}
\iint &\mu_e(t) \nu_e(t)\, dt\,de  
= \iint \widehat{\mu_e}(t) \overline{\widehat{\nu_e}(t)}\, dt\,de
= \iint \widehat{\mu}(te) \overline{\widehat{\nu}(te)}\, dt\,de\\
&=c(n)\int_{\Rn} |x|^{1-n}\widehat{\mu}(x) \overline{\widehat{\nu}(x)}\, dx = c(n,m)\iint|x-y|^{-1}\,d\mu x\,d\nu x>0.\notag
\end{align*}
Hence $\int \mu_e(t) \nu_e(t)\, dt > 0$ for positively many $e$. As $\mu_e \nu_e$ has support in $P_e(A)\cap P_e(B)$, the claim follows.
\end{proof}

For other recent projection results, see \cite{BI1}, \cite{BI2}, \cite{C1}, \cite{C2} and \cite{BFVZ}.

There are many recent results on projections of various special, for example self-similar, classes of sets and measures. I shall not discuss them here but \cite{FFJ} and \cite{S} give good overviews. 

\section{Restricted families of projections}
Here we discuss the question: what kind of projection theorems can we get if the whole Grassmannian $G(n,m)$ is replaced by some lower dimensional subset $G$? A very simple example is the one where $G\subset G(3,1)$ corresponds to a circle in a two-dimensional plane in $\R^3$. For example, we can consider the projections $\pi_{\theta}$ onto the lines $\{t(\cos\theta,\sin\theta,0): t\in\R\}, \theta\in [0,\pi]$. Since $\pi_{\theta}(A)=\pi_{\theta}((\pi(A))$ where $\pi(x,y,z)=(x,y)$, and $\dim A\leq\dim\pi(A)+1$,  it is easy to conclude using Marstrand's projection Theorem \ref{projections} that for any Borel set $A\subset\R^3$, for almost all $\theta\in [0,\pi]$,
\begin{align*}
&\dim \pi_{\theta}(A)\geq \dim A-1\quad \text{if}\ \dim A\leq 2,\\
&\mathcal L^1(\pi_{\theta}(A))>0\quad \text{if}\ \dim A> 2.
\end{align*}
This is sharp by trivial examples; consider product sets $A=B\times\ C, B\subset\R^2,  C\subset\R$.  So we only have an essentially trivial result. The situation changes dramatically if we consider the projections $p_{\theta}$ onto the lines $\{t(\cos\theta,\sin\theta,1): t\in\R\}$. Then the trivial counter-examples do not work anymore and one can now improve the above estimates. The method used for the proof of Theorem \ref{projections} easily gives that if $A\subset\R^3$ is a Borel set with $\dim A\leq 1/2$, then 
$$\dim p_{\theta}(A)\geq \dim A\quad \text{for almost all}\ \theta\in [0,\pi].$$ 
The restriction $1/2$ comes from the fact that instead of \eqref{projineq} we now have only
\begin{equation}\label{projineq2}
\mathcal L^{1}(\{\theta: |p_{\theta}(x)|\leq\delta\})\lesssim \sqrt{\delta/|x|}. 
\end{equation} 

For $\dim A > 1/2$ this becomes much more difficult. Anyway we have 

\begin{thm}\label{kov}
Let $p_{\theta}$ and $q_{\theta}$ be the orthogonal projections onto the line $\{t(\cos\theta,\sin\theta,1): t\in\R\}, \theta\in[0,\pi]$, and its orthogonal complement. Let $A\subset \R^3$ be a Borel set. 
\begin{itemize}
\item[(1)] If $\dim A\leq 1$, then $\dim p_{\theta}(A)=\dim A$ for almost all $\theta\in[0,\pi]$.
\item[(2)] If $\dim A\leq 3/2$, then $\dim q_{\theta}(A)=\dim A$ for almost all $\theta\in[0,\pi]$.
\end{itemize}
\end{thm}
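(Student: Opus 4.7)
The plan is to follow the energy-integral scheme from the proof of Theorem~\ref{projections}. For both parts, fix $0<s<\dim A$, pick by Theorem~\ref{energy} a measure $\mu\in\M(A)$ with $I_s(\mu)<\infty$, and push it forward under $p_\theta$ (respectively under $q_\theta$). The goal is to establish
$$\int_0^\pi I_s(p_{\theta*}\mu)\,d\theta<\infty\qquad\text{(respectively for }q_{\theta*}\mu\text{)},$$
from which the conclusions follow on letting $s\uparrow\dim A$, exactly as in the proof of Theorem~\ref{projections}(1); the reverse inequality is automatic from the Lipschitz character of the projections.

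For part~(1), the pointwise estimate \eqref{projineq2} only suffices for $s<1/2$, so I would switch to the Fourier side. With $e_\theta=(\cos\theta,\sin\theta,1)/\sqrt{2}$ one has $\widehat{p_{\theta*}\mu}(t)=\widehat{\mu}(te_\theta)$, and combining polar coordinates with Plancherel (as in the proof of Theorem~\ref{projections}(2)) converts $\int_0^\pi I_s(p_{\theta*}\mu)\,d\theta$ into
$$c(s)\int_0^\pi\int_\R|\widehat\mu(te_\theta)|^2\,|t|^{s-1}\,dt\,d\theta,$$
a weighted $L^2$-integral of $\widehat\mu$ over the $2$-dimensional cone $C=\{te_\theta:t\in\R,\,\theta\in[0,\pi]\}\subset\R^3$ --- a Fourier \emph{restriction}-type quantity on $C$. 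The one-dimensional curvature of the curve $\theta\mapsto e_\theta$ in $S^2$ is the key resource: via a spherical average / stationary-phase estimate for the oscillatory integral attached to this family, one bounds the cone integral by a constant multiple of $I_s(\mu)$ as long as $s<1$. The barrier at $s=1$ matches the fact that the ``critical'' vectors $z$ making $\theta\mapsto z\cdot e_\theta$ have a near-double zero are exactly the ones producing the $\sqrt\delta$ loss in \eqref{projineq2}.

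For part~(2), the same scheme yields
$$\int_0^\pi I_s(q_{\theta*}\mu)\,d\theta \;=\; c(s)\int_0^\pi\int_{e_\theta^\perp}|\widehat\mu(\xi)|^2\,|\xi|^{s-2}\,d\mathcal{L}^2\xi\,d\theta,$$
and the coarea formula applied to the one-parameter family of planes $\{e_\theta^\perp\}_{\theta\in[0,\pi]}$ recasts the right-hand side as a weighted integral over $\{\xi\in\R^3:\xi_1^2+\xi_2^2>\xi_3^2\}$ with Jacobian factor $\rho(\xi)\sim(\xi_1^2+\xi_2^2-\xi_3^2)^{-1/2}$. Compared with the Fourier representation $I_s(\mu)=c\int|\widehat\mu(\xi)|^2|\xi|^{s-3}\,d\xi$, the excess weight is $|\xi|\,(\xi_1^2+\xi_2^2-\xi_3^2)^{-1/2}$, which is bounded away from $C$ but singular along it. A dyadic decomposition by angular distance $\sim 2^{-k}$ to $C$ in $S^2$ (combined with dyadic shells in $|\xi|$) reduces the problem to controlling the integral of $|\widehat\mu|^2$ on each $2^{-k}$-conical annulus; combining the Frostman bound $\mu(B(x,r))\lesssim r^s$ (which strengthens merely $I_s(\mu)<\infty$) with spherical-average type estimates delivers a gain beating the $2^{k/2}$ weight, and the resulting dyadic sum converges precisely for $s<3/2$.

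The main obstacle in both parts is controlling the thin neighborhood of the cone $C$, where the parameterization degenerates; this is where curvature has to be used substantively, rather than the crude differential inequality leading to \eqref{projineq2}. In practice this reduces to an $L^2$ Fourier-restriction (or extension) estimate on $C$, and is the technical heart of the argument. Once it is in place, the dyadic sums converge, and the thresholds (namely $1$ in part~(1) and $3/2$ in part~(2)) emerge as the scaling balance between the Jacobian blow-up along $C$ and the curvature-driven gain from the one-parameter family.
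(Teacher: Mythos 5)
The reduction to a weighted $L^2$ estimate of $\widehat{\mu}$ over the cone $C=\{te_\theta\}$ is a correct and standard first step, but from that point on your sketch asserts precisely the statement that constitutes the entire difficulty, without proving it. Stationary phase applied to the oscillatory integrals attached to the curved family $\theta\mapsto e_\theta$ is exactly what produces the bound \eqref{projineq2} with its $\sqrt{\delta/|x|}$ loss, i.e.\ the decay exponent $1/2$; it does not upgrade the circular average $\int_0^\pi|\widehat{\mu}(te_\theta)|^2\,d\theta$ to decay like $|t|^{-s}$ for all $s<1$. A restriction-type $L^2$ bound over a one-dimensional curve on $S^2$ is far stronger than the classical spherical-average estimates (which integrate over all of $S^2$), and no input you cite delivers it. This is not a hypothetical worry: the Fourier-restriction route you describe is the one pursued by F\"assler and Orponen \cite{FOr}, \cite{O2} and by D.~M.~Oberlin and R.~Oberlin \cite{OO}, who used the strongest available cone restriction theorems and still obtained only partial improvements beyond the threshold $1/2$, not the sharp exponent $1$. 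The same objection applies to your part (2): the claim that the Frostman condition plus ``spherical-average type estimates'' beats the $2^{k/2}$ Jacobian loss on each conical annulus, with convergence exactly for $s<3/2$, is a restatement of the theorem rather than an argument for it.

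The proofs referenced in the survey are not Fourier-analytic at all. K\"aenm\"aki, Orponen and Venieri \cite{KOV} prove (1), and Orponen and Venieri \cite{OV} prove (2), by converting the projection statement into an incidence and tangency-counting problem for families of circles (more generally, curves) in the plane, and then running the cell-decomposition and tangency bounds from Wolff's work on the Kakeya-type problem for circles \cite{W2}. So to complete your argument you would need to supply a genuinely new $L^2$ restriction estimate on $C$ adapted to Frostman measures --- the technical heart you acknowledge but do not provide --- or else abandon the Fourier route and follow the circle-incidence argument of the cited papers.
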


K\"aenm\"aki, Orponen and Venieri proved (1) in \cite{KOV} and Orponen and Venieri (2) in \cite{OV}. 
They related this problem to circle packing problems and methods of Wolff from \cite{W2}. 

So (1) is the sharp analogue of the corresponding part of Marstrand's projection theorem for these projections. Perhaps (2) is not sharp in the sense that it might hold with $2$ in place of $3/2$. 

One reason for the possibility of such improvements over the first family of projections considered above, the $\pi_{\theta}$, is that the second family, the $p_{\theta}$, is more curved than the first one. That is, the set of the unit vectors generating the first family is the planar curve  $\{(\cos\theta,\sin\theta,0): \theta\in [0,\pi]\}$ while for the second it spans the whole space $\R^3$. More precisely, the curve $\gamma(\theta)=(\cos\theta,\sin\theta,1)/\sqrt{2}\in S^2, \theta\in[0,\pi]$, of the corresponding unit vectors satisfies the curvature condition that for every $\theta\in [0,\pi]$ the vectors $\gamma(\theta), \gamma'(\theta),\gamma''(\theta)$ span the whole space $\R^3$. Partial results were proven earlier by F\"assler and Orponen \cite{FOr}, \cite{O2} and  D. M.  Oberlin and R. Oberlin  \cite{OO} for general $C^2$ curves on $S^2$ satisfying this curvature condition. F\"assler and Orponen conjectured that the  full  Marstrand theorem as in Theorem \ref{kov} (with $3/2$ replaced by $2$) should hold for them.

As we have seen above, if $\rho_{e}:\R^3\to\R, e\in S^2,$ is a family of linear mappings and $\sigma$ is a Borel measure on $S^2$ satisfying
\begin{equation*}
\sigma(\{e: |\rho_{e}(x)|\leq\delta\})\lesssim \delta/|x|, 
\end{equation*} 
then the Marstrand statement $\dim\rho_{\theta}(A)=\min\{\dim A,1\}$ holds for $\sigma$ almost all $e\in S^2$. However such inequality is usually false for less than 2-dimensional measures $\sigma$. Nevertheless Chen constructed in \cite{C2} for all $1<s<2$~  $s$-dimensional Ahlfors-David regular random measures for which it holds, and hence also the Marstrand theorem. He had also many other related results in that paper.

Next we consider projection families in higher dimensions. I state a more general result below but let us start with
$$\pi_t:\R^4\to\R^2, \pi_t(x,y) = x + ty, x,y\in\R^2, t\in\R.$$
This family is closely connected with Besicovitch sets and the Kakeya conjecture in $\R^3$, as we shall later see. The following theorem is due to D. M. Oberlin \cite{Ob3}. It is not explicitly stated there but follows from the proof of Theorem 1.3.

\begin{thm}\label{obproj}
Let $A\subset\R^4$ be a Borel set. 
\begin{itemize}
\item[(1)] If $\dim A \leq 3$, then $\dim\pi_t(A)\geq\dim A - 1$ for almost all $t\in\R$.
\item[(2)]If $\dim A > 3$, then $\mathcal L^2(\pi_t(A))>0$ for almost all $t\in\R$.
\end{itemize}
\end{thm}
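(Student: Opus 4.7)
Both parts will follow a Fourier-analytic scheme analogous to Theorem~\ref{projections} (2) and (3), implemented carefully for the restricted one-parameter family $\{\pi_t\}_{t\in\R}\subset G(4,2)$. For $\mu\in\M(A)$ and $\mu_t=(\pi_t)_*\mu$ the basic Fourier identity is $\widehat{\mu_t}(\xi)=\widehat\mu(\xi,t\xi)$ for $\xi\in\R^2$. For (2) with $\dim A>3$, my plan is to pick a Frostman measure $\mu\in\M(A)$ with $I_s(\mu)<\infty$ for some $s>3$ (Theorem~\ref{energy}) and then to show that for every bounded interval $I\subset\R$,
\[
\int_I \int_{\R^2} |\widehat\mu(\xi,t\xi)|^2 \, d\xi\, dt \,<\,\infty.
\]
By Plancherel and Fubini this forces $\widehat{\mu_t}\in L^2(\R^2)$ for a.e.\ $t\in I$, so $\mu_t$ has an $L^2$ density and $\mathcal L^2(\pi_t(A))>0$ for a.e.\ $t$. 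For (1) the reduction is the same but with the additional weight $|\xi|^{s-2}$ in the integrand; finiteness then yields $I_s(\mu_t)<\infty$ for a.e.\ $t$, which suffices for any $s<\dim A-1$ by starting from a measure with $I_{s+1}(\mu)<\infty$.

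Next I would execute the key change of variables. Writing $\xi=r\omega$ with $\omega\in S^1,\,r>0$, and substituting $u=tr$, one sees $(\xi,t\xi)=(r\omega,u\omega)$, so that $(\omega,r,u)\in S^1\times\R_+\times\R$ parametrizes the three-dimensional Segre cone $C=\{(a,b)\in\R^4:a\parallel b\}$. Combining with Plancherel on each two-plane $V_\omega=\{(r\omega,u\omega):r,u\in\R\}\in G(4,2)$ yields
\[
\int_\R\int_{\R^2}|\widehat\mu(\xi,t\xi)|^2\,d\xi\,dt \,=\,\int_{S^1}\|(P_{V_\omega})_*\mu\|_{L^2(V_\omega)}^2\,d\omega,
\]
so the question becomes a restricted-family Marstrand-type estimate on the $L^2$-norm of 2-plane pushforwards of $\mu$, averaged over the one-parameter family $\{V_\omega\}$. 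I would then attempt to bound this average by $I_s(\mu)$ by writing $\|(P_{V_\omega})_*\mu\|_{L^2}^2=\iint\delta^{(2)}(P_{V_\omega}(z-z'))\,d\mu(z)\,d\mu(z')$ and computing the kernel average
\[
\int_{S^1}\delta^{(2)}(P_{V_\omega}(u,v))\,d\omega \,=\, 2\,\delta(Q(u,v)),\qquad Q(u,v)=u_1 v_2-u_2 v_1,
\]
where $Q$ is a nondegenerate quadratic form on $\R^4$ of signature $(2,2)$ vanishing exactly on $C$. Parseval in $\R^4$ reduces the $\omega$-average to a weighted Fourier integral of $|\widehat\mu|^2$, which one checks equals $c\int_C|\widehat\mu(\xi)|^2|\xi|^{-1}\,d\mathcal H^3(\xi)$, the same cone integral the change of variables produces directly. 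For (1) the extra factor $|\xi|^{s-2}$ leads to an analogous cone integral with weight $|\xi|^{s-3}$.

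The step I expect to be the main obstacle is this cone-restriction bound $\int_C|\widehat\mu|^2|\xi|^{-1}\,d\mathcal H^3\lesssim I_s(\mu)$ for $s>3$. The Kaufman-type control $\int_{B_R}|\widehat\mu|^2\,d\xi\lesssim R^{4-s}I_s(\mu)$ from the hypothesis does \emph{not} trivially restrict from $\R^4$ to the 3-dimensional cone $C$: a priori $|\widehat\mu|^2$ can exhibit Kakeya-type concentration along $C$, and it is precisely such concentration that forces the threshold $s>3$ rather than the weaker $s>2$ a naive codimension count suggests. I would attack the bound via a dyadic decomposition of $\R^4$ into angular slabs $\{\xi:d(\xi,C)\sim 2^{-k}|\xi|\}$, estimating each piece using the ruled structure $C=\bigcup_{\omega\in S^1}V_\omega$ and summing, with the critical scaling gain coming from $s>3$. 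The loss of exactly one dimension in both statements reflects the codimension-$1$ character of the Segre cone inside $\R^4$.
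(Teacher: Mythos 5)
Your setup is the same as the paper's (which follows D.~M.~Oberlin): push forward a Frostman measure, use $\widehat{\mu_t}(\xi)=\widehat{\mu}(\xi,t\xi)$, and reduce both parts to finiteness of a weighted $L^2$ integral of $\widehat{\mu}$ over the cone $C=\bigcup_\omega V_\omega$; your change of variables to $\int_C|\widehat{\mu}|^2|\xi|^{-1}\,d\mathcal H^3$ is just the paper's dyadic estimate \eqref{obeq} written in polar coordinates on the cone, and your identification of the threshold $s>3$ (and of why a naive codimension count suggesting $s>2$ must fail) is correct. The derivation of part (1) from the weighted version is also fine, modulo the endpoint: $I_{s+1}(\mu)<\infty$ alone gives a logarithmically divergent dyadic sum, so you need a Frostman exponent strictly above $s+1$, which is available since $s<\dim A-1$.

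The genuine gap is that you never prove the one estimate everything rests on, namely $\int_{C\cap\{R\leq|\xi|\leq 2R\}}|\widehat{\mu}|^2\,d\mathcal H^3\lesssim R^{4-s}$ (equivalently the paper's \eqref{obeq}, i.e.\ Lemma \ref{oblemma} with $k=1$, $m=2$); you flag it as ``the main obstacle'' and propose an attack by decomposing $\R^4$ into angular slabs $\{\xi: d(\xi,C)\sim 2^{-k}|\xi|\}$ using the ruled structure of $C$. That machinery is not needed and, as sketched, is not a proof. The paper closes this step by a short mollification argument: pick a smooth compactly supported $g\equiv 1$ on $\spt\mu$, so $\widehat{\mu}=\widehat{g\mu}=\widehat{g}\ast\widehat{\mu}$ on the cone; the rapid decay of $\widehat{g}$ disposes of the contribution of $|y|\gg R$, and for $|y|\lesssim R$ one applies the standard local bound $\int_{|y|\leq CR}|\widehat{\mu}(y)|^2\,dy\lesssim R^{4-s}$, which follows directly from $\mu(B(x,r))\leq r^s$. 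In other words, the $1$-neighbourhood of the truncated cone lies in $B(0,CR)$ and the crude global $L^2$ bound on that ball already has exactly the right order $R^{4-s}$; possible concentration of $|\widehat{\mu}|^2$ along $C$ is precisely why one cannot do better, and it is what makes the hypothesis $\dim A>3$ sharp, but it is not an obstruction to the proof. Until you supply this estimate (by the mollification argument or otherwise), the proposal is an accurate reduction rather than a proof.
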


The bounds here are sharp when $\dim A\geq 2$. To see this let $0\leq s \leq 1, C_s\subset\R$ with $\dim C_s=s$, and $A_s = \{(x,y)\in\R^2\times\R^2:x_1\in C_s, y_1=0\}.$ Then $\dim A_s = 2+s, \pi_t(A_s) =  C_s\times\R$ and $\dim \pi_t(A_s) = 1+s$. This shows that (1) is sharp. For (2) we can choose $C_1$ with $\mathcal L^1(C_1)=0$, then $\mathcal L^2(\pi_t(A))=0$. These bounds are not sharp for all $A$ since we have $\dim\pi_t(A)=\dim A$ for almost all $t\in\R$ if $\dim A\leq 1$. Restricting $t$ to some interval $[c,C], 0<c<C<\infty,$ this follows as before from the inequality
$$\mathcal L^{1}(\{t\in[c,C]: |\pi_{t}(x,y)|\leq\delta\})\lesssim \delta/|(x,y)|,$$
which is easy to check. 
If $1\leq\dim A\leq 2$ we can only say that $\dim\pi_t(A)\geq 1$ for almost all $t\in\R$ since $\pi_t(\R\times\{0\}\times\R\times\{0\})=\R$. 

I give a sketch of the proof of Theorem \ref{obproj}. Let $\mu\in\mathcal M(A)$ with 
\begin{equation}
\mu(B(x,r)) \leq r^s\quad \text{for}\ x\in\R^4, r>0, 
\end{equation}
for some $0<s<4$.
 Let $\mu_t\in\mathcal M(\pi_t(A))$ be the push-forward of $\mu$ under $\pi_t$. Then for $\xi\in\Rn$,
$$\widehat{\mu_t}(\xi)=\int e^{-2\pi i\xi\cdot\pi_t(x,y)}\,d\mu (x,y)=\int e^{-2\pi i(\xi,t\xi)\cdot(x,y)}\,d\mu (x,y)=
\widehat{\mu}(\xi,t\xi).$$

It is enough to consider $t$ in some fixed bounded interval $J$. Oberlin proved that for $R>0$,
\begin{equation}\label{obeq}
\int_J\int_{R\leq|\xi|\leq 2R}|\widehat{\mu}(\xi,t\xi)|^2\,d\xi\,dt\lesssim R^{4-s-1}.
\end{equation}
This is applied to the dyadic annuli, $R=2^k, k=1,2,\dots$. The sum converges if $s>3$, and we can choose $\mu$ with such $s$ if $\dim A > 3$. This gives $\int_J\int|\widehat{\mu_t}(\xi)|^2\,d\xi\,dt<\infty$ and yields part (2). To prove part (1) let $0<u<s<\dim A$ and $\mu$ as above. Then \eqref{obeq} yields
\begin{equation*}
\int_J\int|\widehat{\mu_t}(\xi)|^2|\xi|^{u-1-2}\,d\xi\,dt<\infty,
\end{equation*}
so $\dim\pi_t(A)\geq u - 1$ for almost all $t\in J$ and thus $\dim\pi_t(A)\geq\dim A - 1$ for almost all $t\in\R$ by the arbitrariness of $J$ and $u$.

Let us formulate \eqref{obeq} as a more general lemma (a special case of Lemma 3.1 in \cite{Ob3}): 

\begin{lm}\label{oblemma}
 Let $k$ and $m$ be positive integers and $N=(k+1)m$ and let $Q$ be a cube in $\R^k$. Define 
\begin{equation*}
T_t\xi=(t_1\xi,\dots,t_k\xi)\in\R^{km}\quad \text{for}\ \xi\in\R^m, t\in\R^k.
\end{equation*}

If $\mu\in\mathcal M(\R^N)$ with $\mu(B(x,r)) \leq r^s$ for $x\in\R^N, r>0$ for some $0<s<n$, then 
\begin{equation}\label{obeq2}
\int_Q\int_{R\leq|\xi|\leq 2R}|\widehat{\mu}(\xi,T_t\xi)|^2\,d\xi\,dt\lesssim R^{N-s-k}.\end{equation}
\end{lm}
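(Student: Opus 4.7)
The plan is to expand $|\widehat{\mu}(\xi,T_t\xi)|^2$ as a double integral against $\mu\times\mu$, swap the orders of integration, and bound the resulting Fourier kernel by combining the Bessel/stationary-phase decay of the spherical annulus indicator with the Frostman condition on $\mu$. Concretely, writing
$$|\widehat{\mu}(\xi,T_t\xi)|^2 = \iint e^{-2\pi i [\xi\cdot(x_0-y_0) + \sum_{i=1}^{k} t_i\,\xi\cdot(x_i-y_i)]}\,d\mu(x)\,d\mu(y)$$
and applying Fubini, the left-hand side of \eqref{obeq2} equals $\iint K(x-y)\,d\mu(x)\,d\mu(y)$, where
$$K(v):=\int_Q\int_{R\le|\xi|\le 2R} e^{-2\pi i\,\xi\cdot(v_0 + \sum_{i=1}^{k} t_i v_i)}\,d\xi\,dt$$
and $v=(v_0,v_1,\ldots,v_k)\in(\R^m)^{k+1}$.

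Next I would estimate $|K(v)|$. Doing the $\xi$-integration first produces the Fourier transform of the indicator of the spherical annulus $\{R\le|\xi|\le 2R\}\subset\R^m$, which obeys
$$\Bigl|\int_{R\le|\xi|\le 2R} e^{-2\pi i\,\xi\cdot w}\,d\xi\Bigr|\lesssim R^m(1+R|w|)^{-(m+1)/2}.$$
Integrating this in $t$ along the affine $k$-plane $L=\{v_0+\sum t_i v_i:t\in\R^k\}\subset\R^m$, via the change of variables with Jacobian $|v_1\wedge\cdots\wedge v_k|$, yields, in the generic regime $m\ge 2k$, a bound of the form
$$|K(v)|\lesssim \frac{R^{m-k}}{|v_1\wedge\cdots\wedge v_k|}\,(1+R|v_0^\perp|)^{-(m-2k+1)/2},$$
where $v_0^\perp$ is the projection of $v_0$ onto the orthogonal complement of $\spa(v_1,\ldots,v_k)$. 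Near-degenerate configurations of the $v_i$'s and the regime $m<2k$ are handled separately, either by replacing the sharp indicator with a smooth cut-off and integrating by parts in $\xi$, or by the trivial bound $|\widehat{\chi_{\mathrm{ann}}}|\le CR^m$ together with a product estimate in $t$.

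Finally I would integrate $|K|$ against $\mu\times\mu$ and decompose dyadically in the scales $|v_i|\sim 2^{-j_i}$ and $|v_0^\perp|\sim 2^{-\ell}$. Each dyadic shell is essentially a slab in $\R^N$ of mixed codimensions and thicknesses; the Frostman condition $\mu(B(x,r))\le r^s$ bounds its $\mu\times\mu$-measure by covering with balls of radius $\min_i\{2^{-j_i},2^{-\ell}\}$ and applying the Frostman estimate to each ball. Summing the resulting geometric-series contributions across dyadic scales then produces the target bound $R^{N-s-k}$. The main obstacle is this last step: the dyadic Frostman bookkeeping is delicate because $v_0^\perp$ depends on $\spa(v_1,\ldots,v_k)$ (so the ``thin'' direction for $v_0$ shifts with the $v_i$'s), and one must simultaneously balance the Frostman exponents against the kernel decay and the Jacobian factor $|v_1\wedge\cdots\wedge v_k|^{-1}$; careful casework is needed, with the details carried out in Oberlin's original proof of Lemma 3.1 in \cite{Ob3}.
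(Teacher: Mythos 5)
Your overall strategy --- expand $|\widehat{\mu}(\xi,T_t\xi)|^2$ against $\mu\times\mu$, bound the resulting kernel $K$ in absolute value, and then integrate $|K|$ against $\mu\times\mu$ via the Frostman condition --- is not the route taken in the paper, and it contains a gap that is not mere bookkeeping: it cannot produce the bound $R^{N-s-k}$ in the regime where the lemma is actually used. The moment you replace $K(v)$ by the nonnegative majorant $R^{m-k}\,|v_1\wedge\cdots\wedge v_k|^{-1}(1+R|v_0^{\perp}|)^{-(m-2k+1)/2}$ you have discarded the cancellation encoded in the positivity of $|\widehat{\mu}|^2$, and the majorant is simply too large. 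Concretely, on the generic region $|v_1\wedge\cdots\wedge v_k|\sim 1$, $|v_0^{\perp}|\sim 1$ your bound is of size $R^{\,m-k-(m-2k+1)/2}=R^{(m-1)/2}$, and for any measure whose mass is spread out a positive proportion of pairs $(x,y)$ lands in that region; hence your final estimate is at least a constant multiple of $R^{(m-1)/2}\geq 1$, whereas the target $R^{N-s-k}$ is a negative power of $R$ as soon as $s>N-k$. Take $k=1$, $m=2$, and $\mu$ equal to Lebesgue measure on a unit cube in $\R^4$ (so $s=4$): the true left-hand side of \eqref{obeq2} decays rapidly in $R$, and the lemma demands $\lesssim R^{-1}$, but your chain of inequalities bottoms out at $\gtrsim R^{1/2}$. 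Since the lemma is applied precisely with $s>N-k$ to prove Theorem \ref{obproj}(2), the loss is fatal, and no dyadic Frostman casework at the last step can recover it. Deferring that casework to Oberlin's Lemma 3.1 does not help either, because Oberlin's argument (and the sketch in this paper) is not a kernel argument of this type.

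The proof here stays on the Fourier side exactly to avoid breaking the square. One chooses a smooth compactly supported $g$ equal to $1$ on the support of $\mu$, writes $\widehat{\mu}(\xi,T_t\xi)=\widehat{g}\ast\widehat{\mu}\,(\xi,T_t\xi)$ there, and applies Cauchy--Schwarz to get the pointwise bound $|\widehat{g\mu}|^2\lesssim |\widehat{g}|\ast|\widehat{\mu}|^2$, so that $|\widehat{\mu}|^2$ survives intact. The only geometric input is that for fixed $y$ the set of $(\xi,t)$ with $(\xi,T_t\xi)$ within distance $O(1)$ of $y$ has measure $\lesssim R^{-k}$ (because the $t$-derivative of $T_t\xi$ has magnitude $\sim R$ on the annulus), which reduces \eqref{obeq2} to the local estimate $\int_{|y|\leq CR}|\widehat{\mu}(y)|^2\,dy\lesssim R^{N-s}$; that is the only place the Frostman hypothesis is used, and only on balls. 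If you insist on a physical-side argument you would have to retain the oscillation of $K$ through the $t$-integration rather than majorizing it, which in effect reproduces the Fourier-side computation.
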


We obtain \eqref{obeq} from this with $k=1, m=2$. 

In Lemma 3.1 of \cite{Ob3} there is an additional assumption (3.1). This is now trivial: it is applied with $\lambda$ equal to the Lebesgue measure on $Q$. See the proof of Theorem 1.3 in \cite{Ob3} for the identification of our Lemma \ref{oblemma} as a special case of Lemma 3.1 of \cite{Ob3}.

To prove Lemma \ref{oblemma}, choose a smooth function $g$ with compact support which equals 1 on the support of $\mu$. Then $\widehat{g\mu}=\widehat{g}\ast\widehat{\mu}$ and the integral in \eqref{obeq2} equals
$$\int_Q\int_{R\leq|\xi|\leq 2R}|\widehat{g\mu}(\xi,T_t\xi)|^2\,d\xi\,dt
=\int_Q\int_{R\leq|\xi|\leq 2R}\left|\int\widehat{g}((\xi,T_t\xi)-y)\widehat{\mu}(y)\,dy\right|^2\,d\xi\,dt.$$
This can be estimated by standard arguments. When $|y|$ is large as compared to $R$, $|\widehat{g}((\xi,T_t\xi)-y)|$ is small by the fast decay $\widehat{g}$. For $|y|\lesssim R$ one uses 
$$\int_{|y|\leq CR}|\widehat{\mu}(y)|^2\,dy \lesssim R^{s-N},$$
which follows from the assumption $\mu(B(x,r)) \leq r^s$, cf. also \cite{M6}, Section 3.8. Of course, I am skipping several technical details here, see \cite{Ob3}.

We now formulate a more general version of the above projection theorem. Let $k$ and $m$ be positive integers and $N=(k+1)m$. Above we had $k=1, m=2$. Write
$$x=(x_0^1,\dots,x_0^m,x_1^1,\dots,x_1^m,\dots,x_k^1,\dots,x_k^m)\in\R^N, t=(t_1,\dots,t_k)\in\R^k.$$
Consider the linear mappings
\begin{align*}
\pi_t:\R^N\to\R^m, \pi_t(x)&=(x_0^1+\sum_{j=1}^kt_jx_j^1,\dots,x_0^m+\sum_{j=1}^kt_jx_j^m)\\
&=(x_0^1+t\cdot x^1,\dots,x_0^m+t\cdot x^m)=x_0+t\cdot \tilde{x},
\end{align*}
where $x_0=(x_0^1,\dots,x_0^m), x^l=(x_1^l,\dots,x_k^l)$ and $t\cdot \tilde{x}=(t\cdot x^1,\dots,t\cdot x^m)\in\R^m$. Then for $\mu\in \mathcal M(\R^N)$ the push-forward $\mu_t$ of $\mu$ under $\pi_t$ has the Fourier transform for $\xi\in\R^m$, 
$$\widehat{\mu_t}(\xi)=\int e^{-2\pi i\xi\cdot\pi_t(x)}\,d\mu x=\int e^{-2\pi i(\xi\cdot x_0+\xi\cdot(t\cdot \tilde{x}))}\,d\mu x
=\widehat{\mu}(\xi,T_t\xi),$$
where again $T_t\xi=(t_1\xi,\dots,t_k\xi)\in\R^{km}$. Lemma \ref{oblemma} now yields
\begin{equation}\label{obeq1}
\int_Q\int_{R\leq|\xi|\leq 2R}|\widehat{\mu_t}(\xi)|^2\,d\xi\,dt\lesssim R^{N-s-k},
\end{equation}
where $\mu\in\mathcal M(\R^N)$ with $\mu(B(x,r)) \leq r^s$ for $x\in\R^N, r>0$, for some $0<s<n$.  By a similar argument as for Theorem \ref{obproj}, this leads to

\begin{thm}\label{obproj1}
Let $A\subset\R^N$ be a Borel set. 
\begin{itemize}
\item[(1)] If $\dim A \leq N-k$, then $\dim\pi_t(A)\geq\dim A - k(m-1)$ for almost all $t\in\R^k$.
\item[(2)]If $\dim A >N-k$, then $\mathcal L^m(\pi_t(A))>0$ for almost all $t\in\R^k$.
\end{itemize}
\end{thm}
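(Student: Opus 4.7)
The plan is to imitate the proof of Theorem \ref{obproj} verbatim, replacing the $4$-dimensional estimate \eqref{obeq} with its general counterpart \eqref{obeq1}. By exhausting $\R^k$ with countably many bounded cubes, it is enough to produce both conclusions for almost all $t$ in any fixed cube $Q \subset \R^k$. The one numerical identity driving everything is
\begin{equation*}
N - m - k = (k+1)m - m - k = k(m-1),
\end{equation*}
which is what converts the exponent on the right-hand side of \eqref{obeq1} into the correct dimensional deficit.

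For part (2), I would choose $s$ with $N-k < s < \dim A$ and a Frostman measure $\mu \in \mathcal M(A)$ satisfying $\mu(B(x,r)) \leq r^s$. Decomposing the $\xi$-integral dyadically and applying \eqref{obeq1} to each annulus $R \leq |\xi| \leq 2R$ with $R = 2^j$,
\begin{equation*}
\int_Q \int_{|\xi|\geq 1} |\widehat{\mu_t}(\xi)|^2 \, d\xi \, dt \lesssim \sum_{j\geq 0} 2^{j(N-s-k)} < \infty,
\end{equation*}
since $N-s-k < 0$. The low-frequency part is trivially controlled using $|\widehat{\mu_t}| \leq \mu(\R^N)$. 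Hence $\widehat{\mu_t} \in L^2(\R^m)$ for a.e.\ $t \in Q$, so $\mu_t$ is absolutely continuous with $L^2$ density and therefore $\mathcal L^m(\pi_t(A)) > 0$.

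For part (1), I may assume $\dim A > k(m-1)$ (otherwise the bound is vacuous). Fix $0 < u < \dim A - k(m-1)$ and select $s$ with $u + k(m-1) < s < \dim A$; note $s < \dim A \leq N-k$, so Lemma \ref{oblemma} applies to any Frostman measure $\mu$ with growth exponent $s$. On $\{R \leq |\xi| \leq 2R\}$ the weight $|\xi|^{u-m}$ is comparable to $R^{u-m}$, and \eqref{obeq1} gives
\begin{equation*}
\int_Q \int_{R\leq|\xi|\leq 2R} |\widehat{\mu_t}(\xi)|^2 |\xi|^{u-m}\, d\xi\, dt \lesssim R^{u-m} R^{N-s-k} = R^{\,u+k(m-1)-s}.
\end{equation*}
Summing over $R = 2^j$, $j \geq 0$, converges since $u+k(m-1)-s < 0$, while the piece $|\xi| \leq 1$ is harmless because $|\xi|^{u-m}$ is locally integrable in $\R^m$ (as $u > 0$) and $\widehat{\mu_t}$ is bounded. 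By the Fourier representation of the $u$-energy in $\R^m$, this shows $I_u(\mu_t) < \infty$ for a.e.\ $t \in Q$, and Theorem \ref{energy} yields $\dim \pi_t(A) \geq u$. Letting $u \nearrow \dim A - k(m-1)$ along a countable sequence, and $Q \nearrow \R^k$, gives the claim.

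The main obstacle is already packaged inside Lemma \ref{oblemma}; after that, the only task is exponent bookkeeping, anchored by the identity $N - m - k = k(m-1)$, together with the routine separation into low and high frequency regimes. The one place where mild care is needed is ensuring that the parameter $u$ may be chosen both strictly positive and strictly below $\dim A - k(m-1)$, which is possible exactly in the nontrivial range $\dim A > k(m-1)$.
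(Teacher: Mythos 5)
Your proposal is correct and follows exactly the route the paper intends: the paper derives Theorem \ref{obproj1} from \eqref{obeq1} "by a similar argument as for Theorem \ref{obproj}," and your dyadic decomposition, Frostman measure choice, and exponent bookkeeping via $N-m-k=k(m-1)$ are precisely that argument carried out in the general case. No gaps; the low-frequency and limiting-$u$ details you supply are the standard ones the paper leaves implicit.
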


Part (2) is again sharp. To see this, let $A$ consist of the points\\ $(x_0^1,\dots,x_0^m,x_1^1,\dots,x_1^m,\dots,x_k^1,\dots,x_k^m)\in\R^N$ for which $x^1_0\in C$, where $C$ has dimension 1 and measure zero, and $x^1_1=\dots=x^1_k=0$. Part (1) is sharp when $m=1$, but then $k=N-1$ and the standard Marstrand's projection theorem also applies. It also is sharp, for example, when $m=2$ for any $k$ with a similar example as in the case $k=1, m=2$. 

The study of restricted families of projections was started by E. J\"arvenp\"a\"a, M. J\"arvenp\"a\"a, Ledrappier and Leikas in \cite{JJLL}. This work was continued and generalized by the J\"arvenp\"a\"as and Keleti in \cite{JJK}, where they proved sharp inequalities for general smooth non-degenerate families of orthogonal projections onto $m$-planes in $\R^n$. Now the trivial examples such as $\{t(\cos\theta,\sin\theta,0): t\in\R\}, \theta\in [0,\pi]$, are also included, so the bounds are necessarily weaker than in the above special cases.  
Restricted  families appear quite naturally in Heisenberg groups, see  \cite{BDFMT}, \cite{BFMT} and  \cite{FH}. Another motivation for studying them comes from the work of E. J\"arvenp\"a\"a, M. J\"arvenp\"a\"a and Ledrappier and their co-workers on measures invariant under geodesic flows on manifolds, see  \cite{HJJL1} and  \cite{HJJL2}. 

\section{Plane sections and radial projections}

What can we say about the dimensions if we intersect a subset $A$ of $\Rn, \dim A>m,$ with $(n-m)$-dimensional planes? Using Proposition \ref{sectprop} we have for any $V \in G(n,n - m)$,
$$\dim (A \cap (V + x)) \leq \dim A - m\quad \text{for}\ \mathcal{H}^{m}\ \text{almost all}\ x\in V^{\perp},$$
and for any $x \in \Rn$ (see \cite{M1} or \cite{MO}),
$$\dim (A \cap (V + x)) \leq \dim A - m\quad \text{for}\ \gamma_{n,n - m}\ \text{almost all}\ V \in G(n,n - m).$$
The lower bounds are not as obvious, but we have the following result, originally proved by Marstrand in the plane in \cite{Ma1} and then in general dimensions in \cite{M1}:

\begin{thm}\label{sections}
Let $m < s \leq n$ and let $A\subset\R^n$ be $\mathcal H^s$ measurable with $0 < \mathcal{H}^{s}(A) < \infty$. Then 
\begin{itemize}
\item[(1)] For $\mathcal{H}^{s}$ almost all $x \in A$, $\dim (A \cap (V + x)) = s - m$ for $\gamma_{n,n - m}$ almost all $V \in G(n,n - m)$, 
\item[(2)] for $\gamma_{n,n - m}$ almost all $V \in G(n,n - m)$, 
$$\mathcal H^m(\{x\in V^{\perp}: \dim (A \cap (V + x)) = s - m\})>0.$$
\end{itemize}
\end{thm}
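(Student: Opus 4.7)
The approach is to build, for each typical $V$ and generic base point, a Radon measure supported on $A\cap(V+x)$ of finite $(s-m-\e)$-energy and then invoke Theorem \ref{energy}; the upper bounds $\dim(A\cap(V+x))\leq s-m$ in both (1) and (2) are already noted in the paragraph preceding the theorem as consequences of Proposition \ref{sectprop}. Since $0<\mathcal H^s(A)<\infty$, a standard density argument lets one pass to a subset on which $\mu:=\mathcal H^s\lfloor A$ is an upper $s$-regular Frostman measure, so that $I_t(\mu)<\infty$ for every $t<s$. Fix $m<t<s$. For $V\in G(n,n-m)$ let $p_V:\R^n\to V^\perp$ denote the orthogonal projection and define the sliced measure $\mu_{V,x}$, supported on $A\cap(V+x)$, by the weak limit
$$\mu_{V,x}(\varphi):=\lim_{r\to 0}\frac{1}{\alpha_m r^m}\int_{p_V^{-1}(B(x,r))}\varphi\,d\mu,$$
which gives the disintegration $\mu=\int_{V^\perp}\mu_{V,x}\,d\mathcal L^m x$.

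The heart of the proof is the averaged energy estimate
$$\int_{G(n,n-m)}\int_{V^\perp}I_{t-m}(\mu_{V,x})\,d\mathcal L^m x\,d\gamma_{n,n-m}(V)\lesssim I_t(\mu).$$
I would prove it by working with the regularisations $\mu^r_{V,x}:=(\alpha_m r^m)^{-1}\mu\lfloor p_V^{-1}(B(x,r))$, expanding $I_{t-m}(\mu^r_{V,x})$ as a double integral against $\mu\otimes\mu$, and using Fubini to integrate $x$ out; the geometric factor that emerges is $\mathcal L^m\bigl(B(p_V(u),r)\cap B(p_V(v),r)\bigr)\lesssim r^m\mathbf 1_{\{|p_V(u-v)|\leq 2r\}}$. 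Integrating next over $V$ with the Grassmann estimate $\gamma_{n,n-m}(\{V:|p_V(z)|\leq 2r\})\lesssim\min(1,(r/|z|)^m)$, splitting the resulting double integral according to whether $|u-v|\geq r$ or $|u-v|<r$, and using the Frostman bound $\mu(B(\cdot,r))\lesssim r^s$ together with $s>t$ to control the near-diagonal piece, yields a bound $C\,I_t(\mu)+O(r^{s-t})$; a Fatou passage as $r\to 0$ completes the estimate.

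Granted this estimate, Fubini yields that for $\gamma_{n,n-m}$-a.e. $V$ the energy $I_{t-m}(\mu_{V,x})$ is finite for $\mathcal L^m$-a.e. $x\in V^\perp$, so Theorem \ref{energy} gives $\dim(A\cap(V+x))\geq t-m$ on the subset where $\mu_{V,x}\neq 0$; a sequence $t_k\uparrow s$ delivers the bound $\geq s-m$. Since $\mu(\R^n)>0$ and $\mu=\int\mu_{V,x}\,d\mathcal L^m x$, the set $\{x:\mu_{V,x}\neq 0\}$ has positive $\mathcal L^m$-measure for every $V$, and intersecting with the above good set proves (2). For (1), let $B_V:=\{x\in V^\perp:\mu_{V,x}(A)>0,\ I_{t-m}(\mu_{V,x})<\infty\}$; noting that $V+x=V+p_V(x)$, any $x\in A$ with $p_V(x)\in B_V$ satisfies $\dim(A\cap(V+x))\geq t-m$. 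The disintegration identity
$$\mu\bigl(A\cap p_V^{-1}(V^\perp\setminus B_V)\bigr)=\int_{V^\perp\setminus B_V}\mu_{V,x}(A)\,d\mathcal L^m x=0$$
for $\gamma$-a.e. $V$, combined with Fubini in $(x,V)$ against $\mu\otimes\gamma$, then shows that $\mathcal H^s$-a.e. $x\in A$ satisfies $p_V(x)\in B_V$ for $\gamma$-a.e. $V$; a countable exhaustion $t_k\uparrow s$ finishes (1).

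The principal obstacle is the averaged energy inequality above: the existence of the slicing limit defining $\mu_{V,x}$ and the accompanying Fubini/Fatou bookkeeping are standard but delicate, and it is precisely the interplay between the Grassmann bound (which introduces the codimension $m$) and the Frostman bound (which absorbs the near-diagonal contribution only because $s>t$) that produces the dimension $s-m$ on the slices. Once this technical step is in place, the remaining conclusions follow from Fubini and Theorem \ref{energy} by essentially formal manipulations.
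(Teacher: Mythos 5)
The paper does not actually prove this theorem --- it only cites Marstrand \cite{Ma1} and \cite{M1} --- and your argument is precisely the classical sliced-measure proof from those sources (see also \cite{M4}, Chapter 10): the averaged energy inequality $\int\int I_{t-m}(\mu_{V,x})\,d\mathcal L^m x\,d\gamma_{n,n-m}V\lesssim I_t(\mu)$, obtained from the Grassmannian estimate together with the Frostman bound on the near-diagonal piece, is exactly the key lemma there, and your derivation both of it and of (1) and (2) from it is sound. The one caveat worth recording is that the disintegration identity $\mu=\int_{V^\perp}\mu_{V,x}\,d\mathcal L^m x$ holds only for $\gamma_{n,n-m}$-almost every $V$ (it amounts to absolute continuity of the push-forward of $\mu$ onto $V^{\perp}$, which follows from $s>m$ and Theorem \ref{projections}(2)), not for every $V$ as you wrote --- but almost every $V$ is all that parts (1) and (2) require.
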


These statements are essentially equivalent. Clearly, this generalizes part (2) of Theorem \ref{projections}. Now we give exceptional set estimates related to both statements. The first of these is due to Orponen \cite{O1}:

\begin{thm}\label{O}
Let $m<s\leq n$ and let  $A\subset\Rn$ be $\mathcal H^s$ measurable with $0<\mathcal H^s(A)<\infty$. Then there is a Borel set 
$E \subset G(n,n-m)$ such that $\dim E \leq m(n-m) + m - s$ and for $V\in G(n,n-m)\setminus E$,
$$\mathcal H^m(\{x\in V^{\perp}: \dim (A \cap (V + x)) = s - m\})>0.$$
\end{thm}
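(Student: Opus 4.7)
I would reduce the statement to an integrated slice-energy inequality tested against a Frostman measure on the hypothetical exceptional set --- the same template as in the proof of Theorem \ref{excproj}(2), but with the projection $L^2$-Plancherel identity replaced by its slice analogue. The case $s=n$ is trivial by Fubini (with $E=\emptyset$), so assume $m<s<n$.

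For each $\tau\in(0,s-m)$ set
\[
E_\tau := \{V\in G(n,n-m) : \mathcal H^m(\{x\in V^\perp : \dim(A\cap(V+x))\geq \tau\})=0\};
\]
since $E=\bigcup_k E_{s-m-1/k}$ it suffices to bound $\dim E_\tau\leq m(n-m)+m-s$ for each $\tau<s-m$. Suppose, for contradiction, $\dim E_\tau>m(n-m)+m-s$, and pick by Frostman's lemma $\nu\in\mathcal M(E_\tau)$ with $\nu(B(V,r))\leq r^u$ for some $u>m(n-m)+m-s$ and $\mu\in\mathcal M(A)$ with $\mu(B(x,r))\leq Cr^s$. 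Choose an auxiliary exponent $\tau'\in(m(n-m)-u,\,s-m)$; the interval is non-empty precisely by the assumption on $u$. The target is
\[
\int_{G(n,n-m)}\!\!\int_{V^\perp} I_{\tau'}(\mu_{V,x})\,d\mathcal H^m(x)\,d\nu(V)<\infty,
\]
where $\mu_{V,x}$ are the standard slice measures on $A\cap(V+x)$ (\cite{M4}, Ch.~10). Finiteness forces $I_{\tau'}(\mu_{V,x})<\infty$ on an $\mathcal H^m$-positive set of $x$ for $\nu$-a.e.\ $V$, giving $\dim(A\cap(V+x))\geq\tau'$ on such a set, and letting $\tau'\to s-m$ contradicts the definition of $E_\tau$.

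To prove the inequality, combine the slice Plancherel identity
\[
\int_{V^\perp} |\widehat{\mu_{V,x}}(\xi)|^2\,d\mathcal H^m(x)=\int_{V^\perp}|\widehat\mu(\xi+\eta)|^2\,d\mathcal H^m(\eta)\quad(\xi\in V)
\]
with the Fourier formula $I_{\tau'}(\mu_{V,x})=c\int_V|\widehat{\mu_{V,x}}(\xi)|^2|\xi|^{\tau'-(n-m)}d\xi$. After changing variables $w=\xi+\eta\in\R^n$, the left-hand side equals
\[
c\int_{\R^n}|\widehat\mu(w)|^2\Bigl(\int_{G(n,n-m)}|P_V(w)|^{\tau'-(n-m)}\,d\nu(V)\Bigr)dw.
\]
The inner Grassmannian integral is handled using the Frostman-Grassmannian bound $\nu(\{V:|P_V(w)|\leq\delta\})\lesssim(\delta/|w|)^{u-(m-1)(n-m)}$, which comes from the Frostman condition on $\nu$ together with the fact that $\{V:w\in V^\perp\}$ has codimension $n-m$ in $G(n,n-m)$. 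A dyadic scale-summation converges precisely when $\tau'>m(n-m)-u$, yielding $\int|P_V(w)|^{\tau'-(n-m)}d\nu(V)\lesssim|w|^{\tau'-(n-m)}$. Plugging back gives $\lesssim I_{\tau'+m}(\mu)<\infty$ since $\tau'+m<s$.

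The main obstacle is the joint arithmetic of exponents: one simultaneously needs $\tau'<s-m$ (for finiteness of the final $\mu$-energy) and $\tau'>m(n-m)-u$ (for the dyadic Grassmannian sum to converge). The sharp hypothesis $u>m(n-m)+m-s$ is precisely the statement that the interval $(m(n-m)-u,\,s-m)$ for admissible $\tau'$ is non-empty, which is where the optimal bound in the theorem is pinned. A secondary issue is that, for $V$ in the exceptional set, $(P_{V^\perp})_*\mu$ need not be absolutely continuous, so the slice Plancherel identity requires a preliminary truncation of $\mu$ to the Frostman-regular subset $A_N=\{x\in A:\mu(B(x,r))\leq Nr^s\text{ for all }r>0\}$, on which $\mathcal H^s(A\setminus\bigcup_N A_N)=0$.
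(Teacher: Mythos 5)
The paper itself gives no proof of Theorem \ref{O}; it only cites Orponen \cite{O1}, so the comparison here is with the standard argument that the survey's surrounding text points to. Your analytic core is exactly that argument and is correct: the slice Plancherel identity, the Fourier formula for $I_{\tau'}(\mu_{V,x})$, the change of variables producing the weight $|P_V(w)|^{\tau'-(n-m)}$, the Frostman--Grassmannian bound with exponent $u-(m-1)(n-m)$ (the correct transcription of \eqref{projineq1} to $G(n,n-m)$), the dyadic threshold $\tau'>m(n-m)-u$, and the final bound by $I_{\tau'+m}(\mu)<\infty$. This is the slicing analogue of the scheme the paper sketches for Theorem \ref{excproj}(1), and it is where the sharp exponent comes from, as you say.

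There are, however, two genuine gaps in converting the integral estimate into the stated conclusion. First, the reduction $E=\bigcup_k E_{s-m-1/k}$ is false in the direction you need: only $\bigcup_k E_{s-m-1/k}\subset E$ holds. A plane $V$ can satisfy $\mathcal H^m(\{x:\dim(A\cap(V+x))\geq s-m-1/k\})>0$ for every $k$ while the decreasing intersection $\{x:\dim(A\cap(V+x))\geq s-m\}$ is $\mathcal H^m$-null, so bounding each $\dim E_\tau$ does not bound $\dim E$. The countable union must be taken one level down: let $F_{\tau'}$ be the set of $V$ with $\int_{V^\perp}I_{\tau'}(\mu_{V,x})\,d\mathcal H^m x=\infty$; your estimate bounds $\dim F_{\tau'}$, countable stability of Hausdorff dimension bounds $\dim\bigcup_{\tau'\in\Q\cap(0,s-m)}F_{\tau'}$ by the same number, and for $V$ outside this union all the slice energies are finite simultaneously for $\mathcal H^m$-a.e.\ $x$ in the \emph{single fixed} set $\{x:\mu_{V,x}\neq0\}$, which is what yields $\dim(A\cap(V+x))\geq s-m$ on a positive-measure set. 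Second, and relatedly, ``finiteness forces $I_{\tau'}(\mu_{V,x})<\infty$ on a positive set, giving $\dim\geq\tau'$'' is a non sequitur: the zero measure has zero energy, so finiteness is vacuous unless $\mu_{V,x}\neq0$. You need the companion mass identity $\int_{V^\perp}\mu_{V,x}(\Rn)\,d\mathcal H^m x=\mu(\Rn)>0$, which holds exactly when $(P_{V^\perp})_{*}\mu$ is absolutely continuous; truncating $\mu$ to the regular subsets $A_N$ does nothing to ensure this for a bad $V$. The correct repair is to absorb the planes with $(P_{V^\perp})_{*}\mu\notin L^2$ into $E$: by the proof of Theorem \ref{excproj}(2), applied to $V^\perp\in G(n,m)$, they form a set of dimension at most $m(n-m)+m-s$, the same bound, so nothing is lost.
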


The bound $m(n-m) + m - s = \dim G(n,n-m) +m - s$ is the same as in Theorem \ref{excproj}(2). Since it is sharp there, it also is sharp here.

The second estimate is due to Orponen and the author \cite{MO}:

\begin{thm}\label{main2}
Let $m<s\leq n$ and let  $A\subset\Rn$ be $\mathcal H^s$ measurable with $0<\mathcal H^s(A)<\infty$. Then there is a Borel set $B \subset \R^{n}$ such that $\dim B \leq m$ and for $x\in\Rn\setminus B$,
$$\gamma_{n,n-m}(\{V\in G(n,n-m):\dim A\cap(V+x)=s-m\}) >0.$$ 
\end{thm}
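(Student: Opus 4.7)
The plan is to apply Frostman's lemma in its dual form. To show $\dim B \le m$, it suffices to check that for every $t > m$ and every compactly supported Borel measure $\nu$ on $\R^n$ with $\nu(B(x,r)) \le r^t$ we have $\nu(B) = 0$; equivalently, for $\nu$-a.e.\ $x$ the set $\{V \in G(n,n-m) : \dim(A \cap (V+x)) \ge s-m\}$ has positive $\gamma_{n,n-m}$-measure, the reverse inequality being automatic from Proposition \ref{sectprop}. Fix $u < s-m$ close to $s-m$, pick a Frostman measure $\mu \in \M(A)$ with $\mu(B(y,r)) \le r^{s'}$ for some $s' \in (u+m, s)$, and let $\mu^{V,x}$ denote the slice measure on $A \cap (V+x)$, well defined for $\gamma$-a.e.\ $V$ and $P_{V^{\perp} *}\mu$-a.e.\ fiber. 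The heart of the argument is the double energy bound
\begin{equation*}
E(u) := \int_{\R^n} \int_{G(n,n-m)} I_u(\mu^{V,x})\, d\gamma_{n,n-m}(V)\, d\nu(x) < \infty;
\end{equation*}
granted this, Fubini forces $I_u(\mu^{V,x}) < \infty$ for $\nu$-a.e.\ $x$ on a positive-$\gamma$-measure set of $V$, yielding $\dim(A \cap (V+x)) \ge u$ there, and exhausting $u \nearrow s-m$ along a countable sequence completes the proof.

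To bound $E(u)$ I would argue by Fourier analysis in the spirit of the proof of Theorem \ref{main1}. Writing the slice energy as $I_u(\mu^{V,x}) = c_{n,m,u} \int_V |\widehat{\mu^{V,x}}(\eta)|^2 |\eta|^{u-(n-m)}\, d\eta$ and using the slice--Fourier identity expressing $\widehat{\mu^{V,x}}(\eta)$ as a partial inverse Fourier transform of $\widehat{\mu}$ in the $V^{\perp}$-direction evaluated at $P_{V^{\perp}}(x)$, the $\nu$-integration in $x$ inserts a factor $\widehat{\nu}$ restricted to $V^{\perp}$. The change of variables $\xi = \eta + \omega_1$, $\omega' = \omega_2 - \omega_1 \in V^{\perp}$ then recasts $E(u)$ as
\begin{equation*}
E(u) = c \int_{G(n,n-m)} \int_{\R^n}\int_{V^{\perp}} |P_V(\xi)|^{u-(n-m)} \widehat{\mu}(\xi) \overline{\widehat{\mu}(\xi+\omega')} \widehat{\nu}(\omega') \, d\omega' \, d\xi \, d\gamma_{n,n-m}(V).
\end{equation*}
Using that $P_V(\xi) = P_V(\xi+\omega')$ whenever $\omega' \in V^{\perp}$, a Cauchy--Schwarz symmetrization in $\xi$ collapses $\widehat{\mu}(\xi)\overline{\widehat{\mu}(\xi+\omega')}$ into $|\widehat{\mu}(\xi)|^2$. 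Averaging the remaining weight over the Grassmannian by means of
\begin{equation*}
\int_{G(n,n-m)} |P_V(\xi)|^{u-(n-m)}\, d\gamma_{n,n-m}(V) \lesssim |\xi|^{u-(n-m)},
\end{equation*}
which follows from $\gamma_{n,n-m}(\{V : |P_V(\xi)| \le \delta\}) \lesssim (\delta/|\xi|)^{n-m}$ in direct analogy with \eqref{projineq}, reduces the matter to an integral of $|\widehat{\mu}(\xi)|^2 |\xi|^{u-(n-m)}|\widehat{\nu}(\omega')|$ against a coupled measure on $(\xi,\omega')$. A final weighted Cauchy--Schwarz, matching $|\widehat{\mu}|^2|\xi|^{u+m-n}$ against $|\widehat{\nu}|^2|\omega'|^{t'-n}$ with $t' \in (m,t)$, produces the bound $E(u) \lesssim I_{u+m}(\mu)^{1/2}\, I_{t'}(\nu)^{1/2}<\infty$; the slack $t' > m$ is exactly what supplies the integrability in the $\omega'$-direction.

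The main obstacle, I expect, lies in the careful execution of the weighted Cauchy--Schwarz against the singular Frostman measure $\nu$. In the classical Marstrand slicing (Theorem \ref{sections}) the outer measure on $V^{\perp}$ is Lebesgue and Plancherel trivializes the analogous computation to a single $I_{u+m}(\mu) < \infty$ estimate; replacing it by $\nu$ keeps $\widehat{\nu}$ in the integrand and forces one to balance its mean-square decay $\int |\widehat{\nu}|^2|\cdot|^{t'-n}<\infty$ against the bilinear Fourier object produced by the slice--Fourier formula. The sharp threshold $\dim B \le m$ is a direct consequence of this balance, as any $t \le m$ would break the $\omega'$-integrability. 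A further, more routine, technicality is the rigorous justification of the slice--Fourier identity for a general Frostman $\mu$; this I would handle by mollifying $\mu$ to a Schwartz density and passing to the limit in $E(u)$ via Fatou.
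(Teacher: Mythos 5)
Your overall strategy --- dualize via Frostman and bound a double energy $E(u)$ --- has a fatal gap at the very first reduction. Finiteness of $I_u(\mu^{V,x})$ yields $\dim (A\cap(V+x))\geq u$ only when $\mu^{V,x}\neq 0$, and the zero measure has zero energy, so the bound $E(u)<\infty$ is consistent with every single slice through $x$ being empty. The sliced measures are defined (by disintegration) only for almost every fibre with respect to the image of $\mu$ under $P_{V^{\perp}}$, and for a fixed point $x\notin A$ there is no a priori reason why $P_{V^{\perp}}(x)$ should land, for positively many $V$, in the positive-measure set of fibres that actually carry sliced mass of the right dimension. Producing such planes is the real content of the theorem, and it is precisely what the paper's argument supplies: the proof in \cite{MO} runs by contradiction, assuming the exceptional set $B$ has $\dim B>m$, putting a Frostman measure on it, and combining Theorem \ref{sections} with Theorem \ref{main1} (positive $\gamma_{n,m}$-measure of $V$ with $\mathcal L^m(P_V(A)\cap P_V(B))>0$) to find planes that pass through points of $B$ and are simultaneously ``typical'' for $A$. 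Your proposal contains no substitute for this non-vanishing step.

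Even setting that aside, the final weighted Cauchy--Schwarz does not close. The variable $\omega'$ ranges over the $m$-dimensional space $V^{\perp}$, so $\int_{V^{\perp}}|\widehat{\nu}(\omega')|^2|\omega'|^{t'-n}\,d\omega'$ is not $I_{t'}(\nu)$: the restriction of $\widehat{\nu}$ to $V^{\perp}$ is the Fourier transform of the projection of $\nu$ to an $m$-plane, and the $t'$-energy of a measure on an $m$-plane is infinite for every $t'>m$. If instead you run Cauchy--Schwarz honestly on $V^{\perp}$, the conjugate weight $\int_{V^{\perp}}|\omega'|^{-a}\,d\omega'$ needs $a>m$ to converge at infinity, while the companion factor, after averaging over the Grassmannian, is comparable to $I_{a+m}(\nu)$ and so needs $a\leq t'-m$; together these force $t'>2m$, not $t'>m$. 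So the route you describe would at best prove $\dim B\leq 2m$, and the sharper threshold $\dim B\leq m$ genuinely requires the projection-intersection machinery rather than a direct bilinear energy estimate.
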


This probably is not sharp. I expect that the sharp bound for $\dim B$ in the case $m=n-1$ would again be $2(n-1) - s$, as for the orthogonal projections and as in Orponen's radial projection theorem \ref{radial} below. Moreover, one could hope for an exceptional set estimate including both cases, that is, estimate on the dimension of the exceptional pairs $(x,V)$. 

I give a sketch of the proof of Theorem \ref{main2} in the plane. Suppose that it is not true and that there is a set $B$ with $\dim B>1$ such that through the points of  $B$ almost all lines  meet $A$ in a set of dimension less than $s-1$. On the other hand, by Theorem \ref{sections} typical lines through the points of $A$ meet $A$ in a set of dimension $s-1$. By Fubini-type arguments and using Theorem \ref{main1} we can find such typical  lines meeting both $A$ and $B$ leading to a contradiction.

Here we investigated the dimensions of the intersections of our set with lines through a point. But if we only want to know whether these lines meet the set, we are studying radial projections. For these more can be said.  For $x\in\Rn$ define
$$\pi_x:\Rn\setminus\{x\}\to S^{n-1},\quad \pi_x(y)=\frac{y-x}{|y-x|}.$$
Then by the standard proofs the statements of Marstrand's projection theorem are valid for almost all $x\in\Rn$.
Orponen proved in \cite{O5} and \cite{O8} the following sharp estimate for the exceptional set of $x\in\Rn$.
\begin{thm}\label{radial}
Let  $A\subset\Rn$ be a Borel set with $\dim A>n-1$. Then there is a Borel set $B\subset\Rn$ with $\dim B\leq 2(n-1)-\dim A$ such that for every $x\in\Rn\setminus B,~ \mathcal H^{n-1}(\pi_x(A))>0$. Moreover, if $\mu\in\mathcal M(\Rn)$ and $I_s(\mu)<\infty$ for some $n-1<s<n$, then the push-forward of $\mu$ under $\pi_x$ is absolutely continuous with respect to $\mathcal H^{n-1}|\Sn$ for $x$ outside a set of Hausdorff dimension $2(n-1)-s$.
\end{thm}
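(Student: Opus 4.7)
The plan is Frostman duality plus a bilinear energy estimate on the sphere, in the spirit of the proof of Theorem \ref{main1}. The first statement follows from the second: given $\dim A>n-1$ one can pick $\mu\in\M(A)$ with $I_s(\mu)<\infty$ for some $s>n-1$, and absolute continuity of $\sigma_x:=\pi_x\#\mu$ with respect to $\H^{n-1}|\Sn$ forces $\H^{n-1}(\pi_x(A))>0$. I focus on the second (quantitative) statement.

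Argue by contradiction: suppose the set $B\subset\Rn$ of ``bad'' $x$ (those for which $\sigma_x$ is not absolutely continuous) has $\dim B>2(n-1)-s$. Frostman's lemma yields $\nu\in\M(B)$ with $\nu(B(x,\rho))\leq\rho^t$ for some $t>2(n-1)-s$. I would show that $\sigma_x$ actually has an $L^2$ density with respect to $\H^{n-1}|\Sn$ for $\nu$-a.e.\ $x$, contradicting $\spt\nu\subset B$. Via the spherical Hardy--Littlewood characterization of $L^2$ density, Fatou in $r\to 0$, and Fubini, this reduces to the uniform bound
\begin{equation*}
\sup_{0<r<r_0}\iint\left[\int\frac{\mathbf{1}_{\{|\pi_x(y)-\pi_x(z)|\leq r\}}}{r^{n-1}}\,d\nu(x)\right] d\mu(y)\,d\mu(z) <\infty.
\end{equation*}

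The geometric heart is a Frostman tube estimate. From the trigonometric identity
\[
\sin\angle(y-x,z-x) \;=\; \frac{|y-z|\,d(x,L_{yz})}{|x-y|\,|x-z|},
\]
with $L_{yz}$ the line through $y$ and $z$, the set $\{x:|\pi_x(y)-\pi_x(z)|\leq r\}$ is an angular tube around $L_{yz}$ whose transverse radius at distance $a$ from $\{y,z\}$ is of order $ra^2/|y-z|$ in the far-field regime $a\geq|y-z|$ and of order $ra$ in the near-endpoint cones at $y$ or $z$. A dyadic decomposition in $a$, together with the Frostman bound $\nu(B(\cdot,\rho))\leq\rho^t$ applied at the appropriate scale on each tube slice, and a separate argument in the near-endpoint regions, should produce an estimate of the form
\[
\nu\big(\{x:|\pi_x(y)-\pi_x(z)|\leq r\}\big)\;\lesssim\; r^{n-1}\,|y-z|^{\,t-2(n-1)}.
\]
Plugging this in, the iterated integral collapses to a constant multiple of $I_{2(n-1)-t}(\mu)$, which is finite since $2(n-1)-t<s$ and $\mu$ is compactly supported with $I_s(\mu)<\infty$.

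The main obstacle is precisely this tube estimate: naive ball-covers of a tube of length $\ell$ and radius $\rho$ only give $\nu(\mathrm{tube})\lesssim\ell\rho^{t-1}$, which is too lossy for the sharp exponent. The sharp bound emerges only after carefully interlocking the Frostman condition with the variable-radius geometry of the angular tube across every dyadic distance scale, which is the technical heart of Orponen's arguments in \cite{O5} and \cite{O8}. This same balancing is what makes the exponent $2(n-1)-s$ sharp, matching the corresponding orthogonal-projection bound of Theorem \ref{excproj}(2) via the Kaufman--Mattila examples of \cite{KM}.
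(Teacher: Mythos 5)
First, a point of comparison: the survey does not actually prove Theorem \ref{radial} --- it only cites \cite{O5} and \cite{O8} --- so your proposal has to stand on its own. Its soft parts do stand: deducing the first claim from the second (via $s_k\uparrow\dim A$ and a countable union of exceptional sets), and the reduction by Fatou and Fubini to bounding $r^{-(n-1)}\iint\nu(T_r(y,z))\,d\mu y\,d\mu z$ uniformly in small $r$, where $T_r(y,z)=\{x:|\pi_x(y)-\pi_x(z)|\leq r\}$, are standard and correct. The gap is that the tube estimate $\nu(T_r(y,z))\lesssim r^{n-1}|y-z|^{t-2(n-1)}$ on which everything rests is not merely ``the technical heart'' left to the reader: it is false. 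Test it with $\nu=\mathcal L^n|_{B(0,1)}$, i.e.\ $t=n$. Your own description of the geometry gives, with $d=|y-z|$,
$$\mathcal L^n\bigl(T_r(y,z)\cap B(0,1)\bigr)\approx\int_d^1\Bigl(\frac{ra^2}{d}\Bigr)^{n-1}da\approx r^{n-1}d^{-(n-1)},$$
which for $d<1$ is strictly larger than the claimed $r^{n-1}d^{t-2(n-1)}=r^{n-1}d^{-(n-2)}$. The discrepancy is not a removable loss: with the true Lebesgue bound your scheme requires $I_{n-1}(\mu)<\infty$, which is exactly where the hypothesis $s>n-1$ must enter. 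Your claimed bound uses $s>n-1$ nowhere --- it needs only $t>2(n-1)-s$ to conclude $I_{2(n-1)-t}(\mu)<\infty$ --- so it would ``prove'' the theorem for every $s>n-2$. But for $n-2<s<n-1$ the statement is false: then $\dim\pi_x(A)\leq\dim A<n-1$ for \emph{every} $x$, so $\mathcal H^{n-1}(\pi_x(A))=0$ everywhere and the exceptional set is all of $\Rn$, of dimension $n>2(n-1)-s$. Hence the proposed lemma cannot be true.

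The pointwise estimate in fact fails much more badly for general Frostman $\nu$, and the failure modes are exactly the enemies of the theorem. If $\nu$ is a $t$-dimensional measure carried by $L_{yz}\setminus[y,z]$, then $\nu(T_r(y,z))=\nu(\Rn)$ for every $r>0$; if $\nu$ has a $t$-plane's worth of mass inside the aperture-$r$ cone with apex at $y$, then $\nu(T_r(y,z))\gtrsim r^{t-1}$, so $r^{-(n-1)}\nu(T_r)\to\infty$. No pointwise-in-$(y,z)$ bound can survive these configurations; what must be shown is an \emph{integrated} bound in which the hypothesis $s>n-1$ is used to guarantee that $\mu\times\mu$ puts little mass on pairs $(y,z)$ aligned with directions where $\nu$ concentrates. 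That interplay between the two measures is the actual content of Orponen's proof, and it is also why Theorem \ref{radial1} carries the ``contained in a line'' alternative. As written, your outline replaces this content with a lemma that is false and an acknowledgment that the real argument lies elsewhere; the proof is missing precisely where the theorem lives.
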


Orponen proved in \cite{O8} also the following rather surprising result:

\begin{thm}\label{radial1}
Let  $A\subset\R^2$ be a Borel set with $\dim A>0$. Then the set 
$$\{x\in\R^2: \dim\pi_x(A) < \dim A/2\}$$
has Hausdorff dimension 0 or it is contained in a line.
\end{thm}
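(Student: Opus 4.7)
The argument naturally splits into two cases. If $A$ is contained in some line $\ell$, then for every $x\notin\ell$ the restriction $\pi_x|_\ell$ is smooth and bi-Lipschitz on every compact subset of $\ell$ (its differential along $\ell$ is nonzero because $x\notin\ell$), so by passing to a compact $K\subset A$ with $\dim K$ arbitrarily close to $\dim A$ we obtain $\dim\pi_x(A)\geq\dim K=\dim A>\dim A/2$; hence $E\subset\ell$, which is one of the two permitted alternatives. From now on I assume $A$ lies in no line, and suppose towards a contradiction that $\dim E>0$ \emph{and} $E$ is not contained in any line.

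The strategy is to combine a slicing statement with a clean two-point argument. The key claim is that \emph{for every $x\in E$ there exists a line $L_x$ through $x$ with $\dim(A\cap L_x)>\dim A/2$}; heuristically this is a Marstrand-type slicing in the polar coordinates centered at $x$, in which the "base direction" $\pi_x(A)$ has Hausdorff dimension strictly less than $\dim A/2$, so some fiber must absorb more than $\dim A/2$ of the total dimension $\dim A$. Granted the existence of $L_x$ for every $x\in E$, pick distinct $x_1,x_2\in E$. If $L_{x_1}\neq L_{x_2}$, then at least one of the two, say $L_{x_2}$, fails to pass through the other point $x_1$ (otherwise both would equal $\overline{x_1x_2}$, contradicting distinctness). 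The map $\pi_{x_1}|_{L_{x_2}}$ is then smooth and locally bi-Lipschitz, so it preserves Hausdorff dimension on compact subsets; choosing a compact $K\subset A\cap L_{x_2}$ with $\dim K>\dim A/2$ yields
$$\dim\pi_{x_1}(A)\geq\dim\pi_{x_1}(K)=\dim K>\dim A/2,$$
contradicting $x_1\in E$. Hence $L_{x_1}=L_{x_2}$ for every pair, which forces all $L_x$ to coincide with one common line $L$; since $x\in L_x$ for every $x\in E$, we conclude $E\subset L$, contrary to our standing hypothesis.

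The main obstacle is the slicing claim. A naive application of Proposition \ref{sectprop} in polar coordinates supplies only one direction of the Federer slicing inequality and therefore merely an upper bound on typical fiber dimensions; the desired converse, that \emph{some} fiber has dimension $>\dim A/2$, does not follow formally, because the corresponding upper bound $\dim A\leq\dim\pi_x(A)+\sup_e\dim(A\cap L_e)$ is known to require \emph{packing} dimension on the base, and one could in principle have $\dim_P\pi_x(A)\geq\dim A/2$ even while $\dim_H\pi_x(A)<\dim A/2$. Bridging this gap is where the genuine difficulty lies: one apparently needs a finer input, such as a quantitative form of Bourgain's projection theorem (Theorem \ref{bour}) combined with a discretized incidence/additive-combinatorics argument, which is the route taken in \cite{O8}.
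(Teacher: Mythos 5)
The survey does not actually prove this theorem --- it is quoted from Orponen \cite{O8} with no argument given --- so your attempt can only be judged on its own terms, and on those terms it is not a proof. Everything is made to rest on the claim that for every $x$ in the exceptional set $E$ there is a line $L_x$ through $x$ with $\dim (A\cap L_x)>\dim A/2$, and you concede yourself that this claim is left unproved. The parts you do carry out (the case $A\subset\ell$, and the two-point argument showing that the lines $L_x$, if they existed, would all coincide) are correct but carry none of the weight of the theorem; as you correctly observe, Proposition \ref{sectprop} only gives the ``wrong'' direction of the slicing inequality, and the converse bound $\dim A\leq\dim\pi_x(A)+\sup_L\dim(A\cap L)$ fails for Hausdorff dimension on the base. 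So the central step is a genuine gap, not a technicality.

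Worse, the gap is very likely not bridgeable: the intermediate statement you chose appears to be false rather than merely hard. Notice that your two-point argument never uses the hypothesis $\dim E>0$. If the slicing claim held at every $x\in E$, the argument would show that $E$ is \emph{always} contained in a line as soon as it has two points --- a strictly stronger conclusion that would make the ``Hausdorff dimension $0$'' alternative in the statement redundant. The theorem is stated (and described in the survey as rather surprising) precisely as a genuine dichotomy, which is strong evidence that there exist sets $A$, not contained in a line, whose exceptional set $E$ is a non-collinear set of dimension $0$; for any such $A$ your claim must fail at some $x\in E$. The underlying reason is exactly the one you flag: a single center $x$ with $\dim\pi_x(A)<\dim A/2$ forces no individual fiber to be large, because only the packing dimension of $\pi_x(A)$ controls the fibers from above. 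A workable proof cannot route through one exceptional center at a time; it has to extract a contradiction from \emph{pairs} (or larger configurations) of exceptional centers in non-degenerate position, via Bourgain's discretized projection theorem (Theorem \ref{bour}) and an incidence-type argument, which is what \cite{O8} does. As written, your plan identifies the difficulty honestly but does not overcome it, so the proposal should be regarded as incomplete.
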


Obviously the second alternative is needed, since if $A$ is contained in a line, the above set is the same line.

\section{General intersections}
The following theorem was proved in \cite{M3}:

\begin{thm}\label{genint}
Let $s$ and $t$ be positive numbers with $s+t > n$ and $t>(n+1)/2$. Let $A$ and $B$ be Borel subsets of $\Rn$ with $\mathcal H^s(A)>0$ and $\mathcal H^t(B)>0$. Then 
for almost all $g\in O(n)$,
\begin{equation}\label{eq4}
\mathcal L^n(\{z\in\Rn: \dim A\cap (g(B)+z)\geq s+t-n\})>0.
\end{equation}
\end{thm}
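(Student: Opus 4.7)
The approach is the Fourier-analytic construction of a generic intersection measure, averaged over the translation $z\in\R^n$ and the rotation $g\in O(n)$. The role of the hypothesis $t>(n+1)/2$ is to invoke Mattila's spherical average estimate for Fourier transforms of Frostman measures, which controls the most singular Fourier term in the averaged-energy integral.

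By Theorem \ref{energy}, choose $\mu\in\mathcal M(A)$ and $\nu\in\mathcal M(B)$ with $I_{s'}(\mu)<\infty$ and $I_{t'}(\nu)<\infty$ for some $s'<s$, $t'<t$ still satisfying $s'+t'>n$ and $t'>(n+1)/2$. For $(g,z)\in O(n)\times\R^n$ let $\nu^{g,z}$ denote the push-forward of $\nu$ under $y\mapsto g(y)+z$, supported in $g(B)+z$, so that $\widehat{\nu^{g,z}}(\xi)=e^{-2\pi iz\cdot\xi}\hat\nu(g^{-1}\xi)$. Fix a non-negative radial $\phi\in C_c^\infty(\R^n)$ with $\int\phi=1$ and set $\phi_\e(x)=\e^{-n}\phi(x/\e)$. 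The approximate intersection measure
$$d(\mu\cap_\e\nu^{g,z})(x):=\bigl(\phi_\e*\nu^{g,z}\bigr)(x)\,d\mu(x)$$
is concentrated in the $\e$-neighbourhood of $A\cap(g(B)+z)$, and Fubini together with $\int\phi_\e=1$ yields the mass identity
$$\int_{\R^n}\|\mu\cap_\e\nu^{g,z}\|\,dz=\mu(\R^n)\,\nu(\R^n)$$
for every $g$ and every $\e>0$.

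The crux is the averaged-energy bound
\begin{equation*}
\int_{O(n)}\int_{\R^n}I_{s'+t'-n}\bigl(\mu\cap_\e\nu^{g,z}\bigr)\,dz\,dg\le C,
\end{equation*}
uniform in $\e>0$. Writing the Riesz energy in Fourier form via $I_\alpha(\lambda)=c\int|\hat\lambda(\xi)|^2|\xi|^{\alpha-n}\,d\xi$, applying Plancherel in $z$ and averaging in $g$ (which replaces $|\hat\nu(g^{-1}\eta)|^2$ by the rotationally invariant spherical average $\sigma_{|\eta|}(\nu)$ of $|\hat\nu|^2$), the left-hand side is bounded, up to a constant, by
$$\int d\xi\,|\xi|^{s'+t'-2n}\int d\eta\,|\hat\mu(\xi-\eta)|^2\,|\widehat{\phi_\e}(\eta)|^2\,\sigma_{|\eta|}(\nu).$$
The main technical obstacle is the near-diagonal contribution $|\xi|\sim|\eta|$: after changing variables $\xi=\eta+\zeta$ and writing $\eta$ in polar coordinates, the inner spherical Riesz potential $\int_{S^{n-1}}|\zeta+r\omega|^{s'+t'-2n}\,d\omega$ develops a singularity at $r=|\zeta|$ whenever $s'+t'-n\le 1$, and Mattila's spherical average estimate for $|\hat\nu|^2$ is precisely what absorbs this singularity in the range $t'>(n+1)/2$; the remaining non-diagonal contribution is handled by $\int_0^\infty r^{t'-1}\sigma_r(\nu)\,dr=cI_{t'}(\nu)<\infty$ and the Frostman energy bound for $\mu$.

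From the mass identity and the uniform energy bound, together with Fubini, for almost every $g\in O(n)$ the $z$-set on which $\|\mu\cap_\e\nu^{g,z}\|$ is bounded below and $I_{s'+t'-n}(\mu\cap_\e\nu^{g,z})$ is bounded above, uniformly in $\e$, has positive $\mathcal L^n$-measure. For each such pair $(g,z)$, weak-$*$ compactness in the space of measures together with lower semicontinuity of the Riesz energy produces a weak-$*$ subsequential limit $\mu\cap\nu^{g,z}$ as $\e\to 0$, supported in $A\cap(g(B)+z)$, of positive total mass and finite $(s'+t'-n)$-energy. Theorem \ref{energy} then yields $\dim(A\cap(g(B)+z))\ge s'+t'-n$ on a positive $\mathcal L^n$-set of $z$ for almost every $g$. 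Letting $s'\nearrow s$ and $t'\nearrow t$, subject to $s'+t'>n$ and $t'>(n+1)/2$, gives the desired bound $s+t-n$.
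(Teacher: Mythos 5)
The paper itself gives no proof of Theorem \ref{genint}: it is quoted from [M3], with only the remark that the hypothesis $t>(n+1)/2$ ``comes from some Fourier transform estimates.'' Your architecture --- mollified intersection measures $(\phi_\varepsilon\ast\nu^{g,z})\,d\mu$, the Fubini mass identity, an averaged energy bound over $(g,z)$ via Plancherel in $z$ and rotation-averaging in $g$, then weak-$*$ limits and lower semicontinuity of the energy --- is indeed the architecture of the cited proof. But the one step you single out as the crux is exactly where your argument does not close. You assert that the near-diagonal region $|\xi|\sim|\eta|$ is absorbed by the spherical average estimate for $|\widehat{\nu}|^2$ when $t'>(n+1)/2$. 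Quantitatively this cannot work: on the dyadic block $|\zeta|\approx R$ the near-diagonal contribution is about $R^{\alpha-\gamma}\int_{|\zeta|\approx R}|\widehat{\mu}(\zeta)|^2\,d\zeta$ with $\alpha=s'+t'-n$, where $\gamma$ is the decay exponent in $\int_{S^{n-1}}|\widehat{\nu}(rv)|^2\,dv\lesssim I_{t'}(\nu)r^{-\gamma}$; since $\int_{|\zeta|\approx R}|\widehat{\mu}|^2\,d\zeta\approx R^{n-s'}\times(\text{dyadic piece of }I_{s'}(\mu))$, summability forces $\gamma\geq t'$. The paper itself records that the spherical average estimate with exponent $t'$ is \emph{false} for $t'>(n-1)/2$, so in the regime $t'>(n+1)/2$ no pointwise spherical decay can supply what you need; with the available $\gamma<t'$ you end up needing $I_{s'+1}(\mu)<\infty$, which you do not have. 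The true role of $(n+1)/2$, as the paper hints in its distance-set discussion, is the physical-space narrow-annulus estimate $\nu(\{y: r<|x-y|<r+\delta\})\lesssim\delta$ for $\nu$-typical $x$, which controls concentration of the radialized difference measure of $\nu$ on thin annuli; this is a genuinely different mechanism from the one you describe, and it is the heart of the theorem.

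There is a second, independent gap at the end. For each pair $(s',t')$ you obtain, for a.e.\ $g$, a positive-measure set of $z$ (depending on $(s',t')$) where $\dim A\cap(g(B)+z)\geq s'+t'-n$; letting $s'\nearrow s$, $t'\nearrow t$ produces a decreasing sequence of positive-measure sets whose intersection may be null, so the stated conclusion with exponent $s+t-n$ does not follow. The standard repair is to fix once and for all Frostman measures with $\mu(B(x,r))\leq r^s$ and $\nu(B(x,r))\leq r^t$ (so that $I_{s'}(\mu)<\infty$ and $I_{t'}(\nu)<\infty$ for \emph{all} $s'<s$, $t'<t$ simultaneously), build a single intersection measure $\lambda_{g,z}$, and apply the energy bounds for a countable sequence of exponents to that one measure on the one positive-measure set of $z$ where it is nonzero. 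Two smaller points in the same step: the mass identity alone does not prevent the mass from escaping in the limit $\varepsilon\to 0$ (one needs, e.g., a uniform $L^2(dz)$ bound on $\|\mu\cap_\varepsilon\nu^{g,z}\|$ to get uniform integrability), and the weak-$*$ limit is supported in $\overline{A}\cap(g(\overline{B})+z)$, so one must first replace $A$ and $B$ by compact subsets of positive measure.
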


The condition $t>(n+1)/2$ comes from some Fourier transform estimates. Probably it is not needed. 

This was preceded by the papers of Kahane \cite{K} and the author \cite{M2} in which it was shown that the above theorem is valid for any $s+t>n$ provided larger transformation groups are used. For example, it suffices to add also typical dilations $x\mapsto rx, r>0$. 

Here we really need the inequality $\dim A\cap (g(B)+z)\geq s+t-n$, the opposite inequality can fail very badly: for any $0\leq s\leq n$ there exists a Borel set $A\subset \Rn$ such that 
$\dim A\cap f(A)=s$ for all similarity maps $f$ of $\Rn$. This follows from \cite{F4}. The reverse inequality holds if  $\dim A\times B = \dim A + \dim B$, see \cite{M4}, Theorem 13.12. This latter condition is valid if, for example, one of the sets is Ahlfors-David regular, see \cite{M4}, 8.12. For such reverse inequalities no rotations $g$ are needed (or, equivalently, they hold for every $g$).

The following two exceptional set estimates were proven in \cite{M7}:

\begin{thm}\label{genint1}
Let $s$ and $t$ be positive numbers with $s+t > n+1$. Let $A$ and $B$ be Borel subsets of $\Rn$ with $\mathcal H^s(A)>0$ and $\mathcal H^t(B)>0$. Then 
there is a Borel set $E\subset O(n)$ such that 
$$\dim E\leq 2n-s-t+(n-1)(n-2)/2=n(n-1)/2-(s+t-(n+1))$$ 
and for  $g\in O(n)\setminus E$,
\begin{equation}
\mathcal L^n(\{z\in\Rn: \dim A\cap (g(B)+z)\geq s+t-n\})>0.
\end{equation}
\end{thm}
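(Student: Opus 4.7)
The plan is to imitate the proof of Theorem \ref{genint} from \cite{M3}, replacing Haar measure on $O(n)$ by a Frostman measure $\sigma$ supported on a hypothetical exceptional set $E \subset O(n)$, and to drive a contradiction through a Fourier energy estimate. Since $s+t > n+1$, at least one of $s,t$ exceeds $(n+1)/2$; by the symmetry $A \cap (g(B)+z) = g(B+g^{-1}z) \cap A$ we may assume $t > (n+1)/2$. By Theorem \ref{energy} I would then choose $\mu \in \M(A)$ and $\nu \in \M(B)$ with $\mu(B(x,r)) \leq r^{s'}$, $\nu(B(x,r)) \leq r^{t'}$, for some $s' < s$ and $t' < t$ both close to $s,t$ and with $t' > (n+1)/2$. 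Suppose toward a contradiction that $\dim E > 2n - s - t + (n-1)(n-2)/2$ and fix $\sigma \in \M(E)$ with $\sigma(B(g,r)) \leq r^u$ for some $u$ strictly above that bound.

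Next, for $(g,z)\in O(n)\times\Rn$ construct, by the standard mollification in \cite[Ch.~13]{M4}, a sliced measure $\mu_{g,z}$ concentrated on $A \cap (g(B) + z)$. Once we show
$$\int_{O(n)} \int_{\Rn} I_{s+t-n-2\e}(\mu_{g,z}) \, dz \, d\sigma(g) < \infty \qquad \text{for every } \e > 0,$$
it follows that for $\sigma$-a.e.\ $g \in E$ the conclusion $\dim A \cap (g(B) + z) \geq s+t-n-2\e$ holds on a Lebesgue-positive set of $z$, and letting $\e \to 0$ contradicts $g \in E$. The Plancherel-Fubini identity underlying the proof of Theorem \ref{genint} rewrites the inner $z$-integral as a multiple of
$$\int_{\Rn} |\widehat\mu(\xi)|^2 |\widehat\nu(g^T\xi)|^2 |\xi|^{(s+t-n-2\e)-n} \, d\xi.$$

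The crux is the $\sigma$-average of $|\widehat\nu(g^T\xi)|^2$. For fixed $\xi\neq 0$ the pushforward $\tau_\xi$ of $\sigma$ under $g\mapsto g^T\xi/|\xi|\in\Sn$ has fibers equal to cosets of the stabilizer $O(n-1)$, of dimension $(n-1)(n-2)/2$; covering such a fiber by $\lesssim r^{-(n-1)(n-2)/2}$ balls of radius $r$ then yields $\tau_\xi(B(\omega,r))\lesssim r^{u_1}$ with $u_1 := u-(n-1)(n-2)/2 > 2n-s-t$. Feeding $\tau_\xi$ into a spherical Fourier average estimate for Frostman measures on $\Sn$ (a weighted refinement of the $L^2$ spherical averaging bounds in \cite[Sec.~3.8]{M6}), valid because $t' > (n+1)/2$, one obtains
$$\int_{O(n)} |\widehat\nu(g^T\xi)|^2 \, d\sigma(g) = \int_{\Sn} |\widehat\nu(|\xi|\omega)|^2 \, d\tau_\xi(\omega) \lesssim |\xi|^{-(t' + u_1 - (n-1))}$$
uniformly in $\xi$. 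Combined with $\int|\widehat\mu(\xi)|^2|\xi|^{s'-n}\,d\xi\asymp I_{s'}(\mu)<\infty$, the full double integral is bounded, after matching exponents, by a constant multiple of $I_{s'}(\mu)$ precisely when $u_1 > 2n - s - t$ modulo terms that vanish as $s'\nearrow s$, $t'\nearrow t$, $\e\searrow 0$, which is exactly our assumption on $u$.

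The main obstacle is the uniform-in-$\xi$ spherical Fourier estimate: one must verify that the implied constant in $\int_{\Sn}|\widehat\nu(r\omega)|^2\,d\tau(\omega) \lesssim r^{-(t'+\dim\tau-(n-1))}$ does not degenerate as $\tau$ ranges over the parametric family $\{\tau_\xi\}$, and that the Frostman character of $\tau_\xi$ is controlled uniformly in $\xi$. This is where the hypothesis $t > (n+1)/2$ (secured by the symmetry swap made possible by $s+t>n+1$) is consumed, exactly as in \cite{M3}; a subsidiary technicality is justifying the Plancherel-Fubini identity for the sliced measures when averaged against the singular measure $\sigma$, which is handled by the standard mollification-and-limit argument.
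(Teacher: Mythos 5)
First, a caveat: this survey does not prove Theorem \ref{genint1} at all --- it states the result and refers to \cite{M7} for the argument --- so there is no in-paper proof to compare yours against. Your overall architecture does match the strategy of \cite{M7}: argue by contradiction with a Frostman measure $\sigma$ of exponent $u$ on the putative exceptional set, observe that the pushforward $\tau_\xi$ of $\sigma$ under $g\mapsto g^{-1}(\xi/|\xi|)$ satisfies $\tau_\xi(B(\omega,r))\lesssim r^{u-(n-1)(n-2)/2}$ because the fibres are cosets of the stabilizer $O(n-1)$ (this is exactly where the term $(n-1)(n-2)/2$ in the bound comes from, and you have it right), and feed a generalized spherical-average decay estimate into an energy bound for the intersection measures $\mu_{g,z}$.

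The quantitative core of your outline, however, does not work as written. The identity you call ``Plancherel--Fubini'' is wrong: $\int_{\Rn} I_\alpha(\mu_{g,z})\,dz$ is not a multiple of $\int|\widehat\mu(\xi)|^2|\widehat\nu(g^{-1}\xi)|^2|\xi|^{\alpha-n}\,d\xi$; the correct identity (see \cite{M4}, Chapter 13) is the convolution form $c\iint|\widehat\mu(\xi)|^2|\widehat\nu(g^{-1}\eta)|^2|\xi+\eta|^{\alpha-n}\,d\xi\,d\eta$, equivalently $c\iiiint|x-x'-g(y-y')|^{-\alpha}\,d\mu x\,d\mu x'\,d\nu y\,d\nu y'$. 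The unweighted integral $\int|\widehat\mu(\xi)|^2|\widehat\nu(g^{-1}\xi)|^2\,d\xi$ equals instead $\int\mu_{g,z}(\Rn)^2\,dz$, which is needed for a step you omit entirely: showing $\mu_{g,z}\neq 0$ for a positive measure set of $z$ (finiteness of an energy integral is vacuous if the sliced measures vanish). Because the identity is wrong, your exponent matching does not produce the claimed threshold: inserting the decay $|\xi|^{-(t'+u_1-(n-1))}$ into your weighted integral and comparing with $I_{s'}(\mu)$ yields the condition $u_1\geq s-s'-1$, which is vacuous as $s'\nearrow s$, rather than $u_1>2n-s-t$; so the bound on $\dim E$ is never actually derived. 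Moreover, the generalized spherical-average estimate you invoke is the real crux and is left unproved, and as stated it cannot be ``valid because $t'>(n+1)/2$'': already for $\tau$ equal to surface measure ($u_1=n-1$) it reduces to $\int_{\Sn}|\widehat\nu(rv)|^2\,dv\lesssim r^{-t'}$, which fails in general for $t'>(n-1)/2$. Relatedly, the hypothesis $s+t>n+1$ is not there to let you assume $t>(n+1)/2$ (note Theorem \ref{genint1} drops that hypothesis of Theorem \ref{genint}); its role is to make $s+t-n>1$, which is what renders the singular factor $|\xi+\eta|^{(s+t-n)-n}$ in the correct identity integrable over the critical region $\eta\approx-\xi$ against an $(n-1)$-dimensional sphere.
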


Notice that $n(n-1)/2$ is the dimension of $O(n)$. 
The condition $s+t > n+1$ is not needed in the case where one of the sets has small dimension and in this case we have a better upper bound for $\dim E$, although we then need a slight technical reformulation:

\begin{thm}\label{genint2}
Let $A$ and $B$ be Borel subsets of $\Rn$ with $\dim A=s, \dim B=t$ and suppose that $s\leq (n-1)/2$. If $0<u<s+t - n$, then there is a Borel set $E\subset O(n)$ with 
$$\dim E\leq n(n-1)/2-(s+t-n)$$ 
such that for  $g\in O(n)\setminus E$,
\begin{equation}\label{eq18}
\mathcal L^n(\{z\in\Rn: \dim A\cap (g(B)+z)\geq u\})>0.
\end{equation}
\end{thm}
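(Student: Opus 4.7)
My plan is to argue by contradiction, using Mattila's Fourier-analytic framework for intersection theorems \cite{M4}, \cite{M7} averaged against a Frostman measure on the putative exceptional set. Suppose $\dim E > n(n-1)/2 - (s+t-n)$; by Frostman's lemma fix $\sigma \in \mathcal M(E)$ with $\sigma(B(g,r)) \lesssim r^\alpha$ for some $\alpha > n(n-1)/2 - (s+t-n)$, and pick compactly supported Frostman measures $\mu \in \mathcal M(A)$, $\nu \in \mathcal M(B)$ of exponents $s' < s$, $t' < t$ (close enough to $s,t$). For $(g,z) \in O(n) \times \R^n$, let $\lambda_{g,z}$ denote Mattila's sliced intersection measure of $\mu \otimes \nu$ along the $n$-plane $\{(x,y) : x - gy = z\} \subset \R^{2n}$, projected to $\R^n$ via $x$; its support lies in $A \cap (g(B)+z)$. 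By Theorem \ref{energy} it is enough to prove
$$\mathcal I := \int_{O(n)} \int_{\R^n} I_u(\lambda_{g,z}) \, dz \, d\sigma(g) < \infty,$$
since (by Fubini together with a standard positive-mass estimate paralleling the one in Theorem \ref{genint}) this produces some $g \in E$ with $I_u(\lambda_{g,z}) < \infty$ and $\lambda_{g,z}(\R^n) > 0$ on a positive $\mathcal L^n$-measure set of $z$, giving $\dim A \cap (g(B)+z) \geq u$ in violation of the definition of $E$.

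A Fourier computation (a mollification of $\lambda_{g,z}$, Plancherel in the slicing variable $z$, and the Fourier representation of the Riesz kernel) yields, for each fixed $g$,
$$\int_{\R^n} I_u(\lambda_{g,z}) \, dz = c \iint_{\R^n \times \R^n} |\xi - \zeta|^{u-n}\,|\widehat\mu(g\xi)|^2\,|\widehat\nu(\zeta)|^2 \, d\xi\,d\zeta,$$
where the orthogonality of $g$ has been used (via the substitutions $\xi \mapsto g\xi$, $\zeta \mapsto g\zeta$) to absorb it into $\widehat\mu$ while leaving $|\xi - \zeta|^{u-n}$ invariant. Averaging in $g$ against $\sigma$ and interchanging integrals gives
$$\mathcal I = c \iint |\xi - \zeta|^{u-n}\,\bigg(\int_{O(n)}|\widehat\mu(g\xi)|^2\,d\sigma(g)\bigg)\,|\widehat\nu(\zeta)|^2 \, d\xi\,d\zeta.$$
The $O(n)$-integral is then translated to the sphere: the orbit map $\Pi_\xi \colon g \mapsto g\xi/|\xi|$, $O(n) \to S^{n-1}$, is a submersion with $(n-1)(n-2)/2$-dimensional fibres (cosets of $\mathrm{Stab}(\xi) \cong O(n-1)$), so a standard covering argument shows that $\tau_\xi := (\Pi_\xi)_\#\sigma$ is $\beta$-Frostman on $S^{n-1}$ with $\beta := \alpha - (n-1)(n-2)/2 > 2n-1-s-t$, and
$$\int_{O(n)}|\widehat\mu(g\xi)|^2 \, d\sigma(g) = \int_{S^{n-1}}|\widehat\mu(|\xi|\omega)|^2 \, d\tau_\xi(\omega).$$

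The main obstacle is the resulting spherical-average inequality
$$\int_{S^{n-1}}|\widehat\mu(r\omega)|^2 \, d\tau(\omega) \lesssim r^{-(s' + \beta - (n-1))} \qquad (r \gtrsim 1)$$
for a $\beta$-Frostman measure $\tau$ on $S^{n-1}$; this upgrades the classical Lebesgue spherical average $\int_{S^{n-1}}|\widehat\mu(r\omega)|^2\,d\omega \lesssim r^{-s'}$---whose sharp $r^{-s'}$ rate is available precisely in the regime $s \leq (n-1)/2$---to a $\beta$-Frostman measure on the sphere. This is exactly where the hypothesis $s \leq (n-1)/2$ is decisive: without it only the weaker rate $r^{-(n-1)/2}$ is available, which is what forces the one-larger bound of Theorem \ref{genint1}. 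Granting the spherical-average inequality, inserting it into $\mathcal I$ and applying the Riesz composition $|\cdot|^{u-n} \ast |\cdot|^{-(s'+\beta-(n-1))} = c|\cdot|^{u-s'-\beta+(n-1)}$ to the $\xi$-integral (using the crude bound $\int|\widehat\mu(g\xi)|^2\,d\sigma(g) \leq \sigma(O(n))\mu(\R^n)^2$ to handle the region where the spherical bound is too singular at the origin) reduces the convergence of $\mathcal I$ to $\int|\widehat\nu(\zeta)|^2 |\zeta|^{u-s'-\beta+(n-1)} d\zeta < \infty$, which follows from $I_{t'}(\nu) < \infty$ once $\beta > u + n - 1 - s' - t'$. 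Since $\beta > 2n-1-s-t$ and $s', t'$ may be taken arbitrarily close to $s, t$, this is equivalent to $u < s+t-n$---the standing hypothesis on $u$---and the proof is complete modulo the spherical-averaging lemma.
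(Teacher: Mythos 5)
Your overall architecture is the natural one and is the route the paper points to (the paper itself defers the proof to \cite{M7}, flagging only the spherical decay estimate as the decisive ingredient): a Frostman measure $\sigma$ of exponent $\alpha>n(n-1)/2-(s+t-n)$ on the exceptional set, Mattila's intersection measures $\lambda_{g,z}$, the identity
$$\int_{\Rn} I_u(\lambda_{g,z})\,dz \,=\, c\iint|\xi-\zeta|^{u-n}\,|\widehat{\mu}(g\xi)|^2\,|\widehat{\nu}(\zeta)|^2\,d\xi\,d\zeta,$$
the pushforward of $\sigma$ along the orbit map to a $\beta$-Frostman measure on $\Sn$ with $\beta=\alpha-(n-1)(n-2)/2>2n-1-s-t$, and a spherical average estimate as the crux. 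All of that is sound, and you have correctly located where the hypothesis $s\leq(n-1)/2$ must enter.

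The argument does not close, however. First, the bookkeeping at the end is wrong: $\int|\widehat{\nu}(\zeta)|^2|\zeta|^{u-s'-\beta+n-1}\,d\zeta<\infty$ follows from $I_{t'}(\nu)=c\int|\widehat{\nu}(\zeta)|^2|\zeta|^{t'-n}\,d\zeta<\infty$ only when $u-s'-\beta+n-1\leq t'-n$, that is $\beta\geq 2n-1+u-s'-t'$, not $\beta>u+n-1-s'-t'$. Since the contradiction hypothesis only supplies $\beta>2n-1-s-t$, you fall short by essentially $u$, so the computation as written proves nothing for any $u>0$. Equivalently, your lemma gives $\int_{O(n)}|\widehat{\mu}(g\xi)|^2\,d\sigma(g)\lesssim|\xi|^{-\gamma}$ with $\gamma=s'+\beta-(n-1)$, which is only marginally more than $n-t$, while the Riesz composition step demands $\gamma>u+n-t'$. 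This is not a defect that a sharper covering argument repairs: the rate $r^{-(s'+\beta-(n-1))}$ is exactly what the ``locally constant on $1/r$-caps'' comparison with the surface-measure average yields (one pays $r^{n-1-\beta}$ for replacing $dv$ by $\tau$), and it is attained --- for instance for $\mu$ the length measure on a segment and $\tau$ concentrated near the great subsphere orthogonal to it --- whereas the full rate $r^{-s'}$ that your bookkeeping would need is false for $\beta<n-1$. So no pointwise-in-$|\xi|$ bound on the $O(n)$-average of the required strength exists, and the missing ingredient --- how one actually exploits $s\leq(n-1)/2$ to beat the trivial $r^{n-1-\beta}$ loss, which is the substance of the argument in \cite{M7} --- is not supplied here.
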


The formulation in \cite{M7} is slightly weaker, but it easily implies the above. What helps here is the following sharp decay estimate for quadratic spherical averages for Fourier transforms of measures with finite energy:

$$\int_{|v|=1}|\widehat{\mu}(rv)|^2\,dv \leq C(n,s)I_s(\mu)r^{-s},\quad r>0,\ 0<s\leq (n-1)/2.$$

Such an estimate is false for $s>(n-1)/2$. There are sharp estimates in the plane by Wolff \cite{W3},  and good, but perhaps not sharp, estimates in higher dimensions by Erdo\u gan \cite{E}. More precisely, for $s\geq n/2$ and $\epsilon>0$,
\begin{equation}\label{wolff-e}
\int_{|v|=1}|\widehat{\mu}(rv)|^2\,dv \leq C(n,s)I_s(\mu)r^{\epsilon-(n+2s-2)/4},\quad r>0.
\end{equation}
This is very useful for distance sets, as discussed below, but gives very little for the intersections.
The proof uses restriction and Kakeya methods and results. In particular, the case $n\geq 3$ relies on Tao's bilinear restriction theorem.
These are discussed in \cite{M6}.  

Let us speculate about the possible sharp estimates in the plane. In Theorem \ref{genint1} we have the upper bound $4-(s+t)$ and in Theorem \ref{genint2} we have $3-(s+t)$. Could the second estimate be valid whenever $s+t>2$? This would mean that the dimension is $0$ when $s+t>3$. Could the exceptional set even be countable then? I don't think so, but I don't have a counter-example. Anyway, it need not be empty whatever the dimensions are. That is, using only translations we cannot say much for general sets. The following example follows from \cite{M2}, or see \cite{Ke1} for having $A=B$: there are compact subsets $A$ and $B$ of $\Rn$ such that $\dim A = \dim B =n$ and $A\cap (B+z)$ contains at most one point for every $z\in\Rn$. 

A problem related both to projections and intersections is the distance set problem. For $A\subset\Rn$ define the distance set

$$D(A) = \{|x - y|: x,y \in A\}\subset [0,\infty). $$
The following Falconer's conjecture seems plausible:
\begin{conjecture}\label{Falconer-conj}
If $n\geq 2$ and $A\subset\Rn$ is a Borel set with $\dim A>n/2$, then $\mathcal L^1(D(A))>0$, or even $\operatorname{Int}(D(A))\neq\emptyset$.
\end{conjecture}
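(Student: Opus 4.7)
The plan is to follow the Fourier-analytic strategy that Falconer himself used, refined through the lens of spherical average estimates. Fix $s$ with $n/2<s<\dim A$ and, using Theorem \ref{energy} together with \eqref{dim}, pick $\mu\in\M(A)$ with $I_s(\mu)<\infty$. Introduce the \emph{distance measure} $\nu$ on $[0,\infty)$, defined as the push-forward of $\mu\times\mu$ under $(x,y)\mapsto |x-y|$. To establish $\LL^1(D(A))>0$ it suffices to show that $\nu$ is absolutely continuous with $L^2$ density, which by Plancherel amounts to proving $\widehat{\nu}\in L^2(\R)$; to obtain the stronger $\operatorname{Int}(D(A))\neq\emptyset$ one instead aims for $\widehat{\nu}\in L^1(\R)$, so that the density is continuous and not identically zero near some point of $D(A)$.

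The next step is to express $\widehat{\nu}$ in terms of spherical averages of $\widehat{\mu}$. A standard computation using polar coordinates and the fact that the Fourier transform of the surface measure on $rS^{n-1}$ is essentially $r^{n-1}\widehat{\sigma}(r\,\cdot\,)$ yields
\begin{equation*}
\int_1^\infty |\widehat{\nu}(r)|^2\,r^{n-1}\,dr \;\lesssim\; \int_1^\infty r^{n-1}\Bigl(\int_{S^{n-1}}|\widehat{\mu}(rv)|^2\,dv\Bigr)r^{n-1}\,dr,
\end{equation*}
so everything boils down to the decay of the spherical $L^2$ average $\sigma_\mu(r):=\int_{S^{n-1}}|\widehat{\mu}(rv)|^2\,dv$. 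Combining the sharp bound $\sigma_\mu(r)\lesssim I_s(\mu)r^{-s}$ for $s\leq (n-1)/2$ with the Wolff--Erdo\u gan bound \eqref{wolff-e} for $s\geq n/2$ gives $\sigma_\mu(r)\lesssim r^{\e-(n+2s-2)/4}$, which after substitution reduces the conjecture (for positive length) to proving
\begin{equation*}
\sigma_\mu(r) \;\lesssim\; I_s(\mu)\,r^{-s}\qquad\text{for all}\ n/2<s<n.
\end{equation*}

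From here, the strategy is to attack this missing spherical-average bound directly, rather than through distance sets. The main obstacle is precisely this estimate: pushing $\sigma_\mu(r)\lesssim r^{-s}$ past the threshold $s=(n-1)/2$ is equivalent, up to $\e$-losses, to essentially optimal $L^2$ restriction/Kakeya estimates for the sphere, which in turn are bound up with the Kakeya conjecture discussed later in this survey. In the plane one could try to combine Wolff's circular maximal function technology with an incidence/additive-combinatorial input (\`a la the Guth--Iosevich--Ou--Wang approach) to squeeze past the $4/3$ threshold all the way to $1$; in higher dimensions one would need analogous improvements on multilinear restriction and decoupling. For the interior version, one would additionally need a pointwise (not just $L^2$-averaged) decay for $\widehat{\nu}$, which at present requires $s$ strictly larger than $n/2$ by a definite margin, and which I expect to be the most recalcitrant part of the conjecture.
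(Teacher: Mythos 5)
This statement is Falconer's distance set conjecture; the paper records it as an open conjecture, offers no proof, and only cites the partial results of Falconer ($\dim A>(n+1)/2$) and Wolff--Erdo\u gan ($\dim A>n/2+1/3$). Your text is likewise not a proof but a reduction, and the step you reduce to is one the paper explicitly rules out: you ask for $\int_{S^{n-1}}|\widehat{\mu}(rv)|^2\,dv\lesssim I_s(\mu)r^{-s}$ for all $n/2<s<n$, while the paper states in Section 6 that this estimate ``is false for $s>(n-1)/2$'' --- and the conjecture lives entirely in the range $s>n/2>(n-1)/2$. So the plan terminates at a statement known to be false, not merely open.

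There is a second, more structural problem. Even the correct form of the reduction does not need the full decay $r^{-s}$: writing $\sigma_\mu(r)=\int_{S^{n-1}}|\widehat{\mu}(rv)|^2\,dv$, the $L^2$-density argument for the distance measure requires finiteness of the Mattila integral $\int_1^\infty \sigma_\mu(r)^2 r^{n-1}\,dr$ (note your displayed inequality has $\sigma_\mu(r)r^{2(n-1)}$ where it should have $\sigma_\mu(r)^2 r^{n-1}$), and combining this with $\int_1^\infty\sigma_\mu(r)r^{s-1}\,dr\approx I_s(\mu)<\infty$ shows that what one actually needs is $\sigma_\mu(r)\lesssim r^{-(n-s)}$. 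By Wolff's sharp planar bound this holds exactly when $s\geq 4/3$ in $\R^2$, and the Wolff--Erdo\u gan estimate \eqref{wolff-e} gives exactly the threshold $n/2+1/3$ in general; moreover the extremal examples show the spherical-average route \emph{cannot} be pushed to $n/2$. So your strategy, carried out optimally, reproves the theorem already stated in the paper and provably stops there. Any genuine attack on the conjecture has to abandon the pure $L^2$ spherical-average framework (as the subsequent good/bad decomposition methods do), and your proposal does not supply such an idea. For the ``interior'' half of the conjecture the situation is the same one step earlier: $\widehat{\nu}\in L^1$ via Cauchy--Schwarz needs $s>(n+1)/2$, which is Falconer's and the paper's $\operatorname{Int}(D(A))\neq\emptyset$ result, not the conjectured $n/2$.
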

Falconer \cite{F5} proved in 1985 that $\dim A>(n+1)/2$ implies $\mathcal L^1(D(A))>0$, and we also have then $\operatorname{Int}(D(A))\neq\emptyset$ by Sj\"olin and myself \cite{MS}. Here appears the same bound $(n+1)/2$ as for the intersections, and for the same reason. In both cases for a measure $\mu$ with finite $s$-energy estimates for the measures of the narrow annuli, $\mu(\{y:r<|x-y|<r+\delta\})$, for $\mu$ typical centers $x$ are useful. They are rather easily derived with the help of the Fourier transform if $s\geq (n+1)/2$.

The best known result is due to Wolff \cite{W3} for $n=2$ and to Erdo\u gan \cite{E} for $n\geq 3$:
\begin{thm}
If $n\geq 2$ and $A\subset\Rn$ is a Borel set with $\dim A>n/2+1/3$, then $\mathcal L^1(D(A))>0$.
\end{thm}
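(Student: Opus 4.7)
The plan is to produce a positive Borel measure supported on $D(A)$ that is absolutely continuous with respect to $\mathcal L^1$. Choose $s$ with $n/2+1/3<s<\dim A$ and, invoking Theorem \ref{energy}, a measure $\mu\in\mathcal M(A)$ with $I_s(\mu)<\infty$. Let $\delta_\mu$ denote the push-forward of $\mu\times\mu$ under the map $(x,y)\mapsto|x-y|$, so $\delta_\mu$ is a finite positive Borel measure on $[0,\infty)$ supported in $D(A)$. Since $\delta_\mu(\R)=\mu(\R^n)^2>0$, it suffices to show that $\delta_\mu\ll\mathcal L^1$; for this I shall in fact prove $\delta_\mu\in L^2(\R)$, from which $\mathcal L^1(\mathrm{supp}\,\delta_\mu)>0$ is immediate.

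The key reduction, due to Mattila, expresses the $L^2$ norm of the distance distribution via the spherical averages of $\widehat\mu$. Setting
$$\sigma_\mu(r):=\int_{S^{n-1}}|\widehat\mu(rv)|^2\,d\sigma(v),$$
one has the bound
$$\|\delta_\mu\|_{L^2(\R)}^2\lesssim \int_0^\infty \sigma_\mu(r)^2\,r^{n-1}\,dr=:M_s(\mu),$$
obtained by writing the radial distribution of $\mu\ast\tilde\mu$ in polar coordinates and applying Plancherel. It therefore suffices to prove $M_s(\mu)<\infty$.

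To estimate $M_s(\mu)$, I would apply the Wolff--Erdo\u gan inequality \eqref{wolff-e} to only \emph{one} of the two factors $\sigma_\mu(r)$ in the integrand, keeping the other in its polar form and using
$$I_s(\mu)=c(n,s)\int_0^\infty \sigma_\mu(r)\,r^{s-1}\,dr.$$
After splitting the $r$-integral at $r=1$ (the low-frequency piece being trivial since $\sigma_\mu$ is bounded), the finiteness of $M_s(\mu)$ reduces to the exponent inequality
$$n-1+\epsilon-\tfrac{n+2s-2}{4}\leq s-1,$$
which rearranges to $s\geq n/2+1/3+2\epsilon/3$. Because $s>n/2+1/3$, one can select $\epsilon>0$ small enough to validate this, yielding $M_s(\mu)\lesssim I_s(\mu)^2<\infty$. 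Hence $\delta_\mu\in L^2(\R)$ and $\mathcal L^1(D(A))>0$.

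The main obstacle is the spherical-average decay estimate \eqref{wolff-e} itself, which is the deep analytic input: Wolff's proof for $n=2$ rests on refined Kakeya-type incidence bounds for tubes, while Erdo\u gan's extension to $n\geq 3$ passes through Tao's bilinear restriction theorem. Everything else — the Mattila reduction from distance sets to spherical averages, the polar-coordinate expression of $I_s$, and the arithmetic matching of exponents that singles out the threshold $n/2+1/3$ — is comparatively routine and essentially forced by scaling.
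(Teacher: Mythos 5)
Your proposal is correct and is exactly the argument the paper alludes to when it says ``the proof is based on the estimate \eqref{wolff-e}'': the Mattila-integral reduction $\mathcal L^1(D(A))>0$ provided $\int_1^\infty\sigma_\mu(r)^2r^{n-1}\,dr<\infty$, followed by applying the spherical decay bound to one factor of $\sigma_\mu(r)$ and the polar-coordinate energy identity to the other, which yields the threshold $s>n/2+1/3$ precisely as you compute. The only cosmetic caveat is that Mattila's reduction is usually stated for a power-weighted version of $\delta_\mu$ (coming from the $|x|^{-(n-1)/2}$ asymptotics of $\widehat{\sigma^{n-1}}$) rather than for $\|\delta_\mu\|_{L^2}$ itself, but this does not affect the conclusion.
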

The proof is based on the estimate \eqref{wolff-e}.

The relation to projections appears when we look at the pinned distance sets:
$$D_x(A) = \{|x - y|: y \in A\}\subset [0,\infty),\quad x\in\Rn. $$
Peres and Schlag proved in \cite{PS} that these too have positive Lebesgue measure for many $x$ provided $\dim A>(n+1)/2$. We can think of $D_x(A)$ as the image of $A$ under the projection-type mapping $y\mapsto |x-y|$.

Various partial results on distance sets have recently been proved, among others, by Iosevich and Liu \cite{IL1}, \cite{IL2}, Luc\'a and Rogers \cite{LR}, Orponen \cite{O4} and Shmerkin \cite{S2}, \cite{S3}.

\section{Besicovitch and Furstenberg sets}

We say that a set in $\R^n, n\geq2,$ is a \emph{Besicovitch set}, or a Kakeya set, if it has zero Lebesgue measure and it contains a line segment of unit length in every direction. This means that for every $e\in S^{n-1}$ there is $b\in\Rn$ such that $\{te+b:0<t<1\}\subset B$. It is not obvious that Besicovitch sets exist but they do in every $\R^n, n\geq 2$:

\begin{thm}\label{bes} 
For any $n\geq 2$ there exists a Borel set~$B\subset\mathbb{R}^n$ such that $\mathcal{L}^n(B)=0$ and $B$ contains a whole line in
every direction. Moreover, there exist compact Besicovitch sets in $\Rn$. 
\end{thm}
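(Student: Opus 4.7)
The theorem has two claims: existence of a Borel null set containing a whole line in every direction, and existence of compact Besicovitch sets. My plan is to treat the planar case $n=2$ directly by a Perron-tree construction and then lift to $\R^n$, $n\ge 3$, by Cartesian product with a cube.

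For $n=2$ I would use the classical Besicovitch--Perron sliding-triangles argument. Start with a closed triangle $T\subset\R^2$ with apex $v$ and a horizontal base $I$. Joining $v$ to the $2^k-1$ equispaced subdivision points of $I$ splits $T$ into $2^k$ narrow sub-triangles $T_1,\dots,T_{2^k}$; translating these horizontally and pairing them dyadically so that sibling pairs maximally overlap produces, by an explicit induction, a figure of area at most $(3/4)^k|T|+O(2^{-k})$ that still contains, for every $x\in I$, a full translate of the segment $[v,x]$. Since $\{\text{directions of }[v,x]:x\in I\}$ sweeps out a nontrivial angular interval, finitely many rotated copies of such figures together cover all of $S^1$. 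A diagonal limit as $k\to\infty$ (arranging the translations to be nested, coding the directional data by a countable dense set, and then compactifying) yields a compact $B_0\subset\R^2$ with $\mathcal L^2(B_0)=0$ containing a unit segment in every direction. For the \emph{full line} version, either re-run the argument with triangles replaced by infinite cones, or simply form $B_0^{*}=\bigcup_{j\in\Z}(B_0+j\mathbf{e})$ for a horizontal unit vector $\mathbf{e}$ and take a countable union of rotated copies of such $B_0^{*}$, which remains a Borel null set.

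For $n\ge 3$ I would set $B:=B_0\times[0,1]^{n-2}$, which is compact with $\mathcal L^n(B)=\mathcal L^2(B_0)=0$ by Fubini. Given a direction $e=(e',e'')\in\R^2\times\R^{n-2}$ with $|e|=1$, two cases arise. If $e'\ne 0$, set $r=|e'|\in(0,1]$ and pick a sub-segment of length $r$ of the unit segment that $B_0$ provides in direction $e'/r$, say $\{a+s e':s\in[0,1]\}\subset B_0$. Choose $b\in[0,1]^{n-2}$ coordinate by coordinate so that $b+s e''\in[0,1]^{n-2}$ for all $s\in[0,1]$ (take $b_i=0$ if $e''_i\ge 0$, and $b_i=-e''_i$ otherwise; this works since $|e''|\le 1$). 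Then $\{(a,b)+s e:s\in[0,1]\}$ is a unit segment in $B$ in direction $e$. If $e'=0$, pick any $a\in B_0$ and the same $b$; the segment $\{(a,b)+s e:s\in[0,1]\}$ lies in $\{a\}\times[0,1]^{n-2}\subset B$. The unbounded line-containing version goes through identically starting from the planar full-line set and replacing the cube by $\R^{n-2}$.

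The conceptual difficulty is entirely in the planar step, specifically the combinatorial area estimate in the Perron-tree construction and the refinement needed to push the area all the way to zero rather than merely arbitrarily small; this is where the well-known efficient pairing of translated sub-triangles and a careful limit procedure are essential. The higher-dimensional step is then a routine Fubini together with a direction case split, the only subtlety being that the cube $[0,1]^{n-2}$ must be able to accommodate the component $e''$ of any $e\in S^{n-1}$, which holds for all $n\ge 3$ because $|e''|\le 1$.
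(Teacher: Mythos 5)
Your route is genuinely different from the paper's: the paper does not use the Perron tree at all, but rather the point--line duality $l(a,b)=\{(x,a+bx)\}$ together with a planar set $C$ (a rotated four-corner Cantor set) whose projections $\pi_t(C)=\{a+tb:(a,b)\in C\}$ are null for a.e.\ $t$, so that $B=\bigcup_{(a,b)\in C}l(a,b)$ is null by Fubini and consists of \emph{whole lines} by construction. Your Perron-tree half is fine in outline (the nesting/limiting procedure needed to push the area to exactly zero is nontrivial but classical, and you flag it), and your lift to $\R^n$ via $B_0\times[0,1]^{n-2}$ with the case split on $e'$ is correct.

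The genuine gap is in the whole-line claim, which is the first assertion of the theorem. The set $B_0^{*}=\bigcup_{j\in\Z}(B_0+j\mathbf{e})$ contains, in a given direction $e$, only the translates $\{a_e+se:s\in[0,1]\}+j\mathbf{e}$; these parallel unit segments are collinear only when $e$ is parallel to $\mathbf{e}$, so $B_0^{*}$ contains a full line in essentially one direction. A rotated copy $\rho(B_0^{*})$ likewise yields a full line only in the direction $\rho(\mathbf{e})$, so a countable union of rotated copies produces full lines in only countably many directions (plus unit segments in all directions), not a whole line in \emph{every} direction. The alternative you mention --- rerunning the sliding-triangle argument with infinite cones --- does not make sense either, since that argument is an area-minimization of bounded figures and infinite cones have infinite measure from the start. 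The standard repair is precisely the paper's duality: a point $(a,b)$ dualizes to an entire line, so nullity of the projections $\pi_t(C)$ gives a null union of full lines at no extra cost. You would need to import that argument (or some other mechanism) to close this part.
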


\begin{proof}
It is enough to prove this in the plane, then $B\times\R^{n-2}$ is fine in $\Rn$. 
We shall use projections and duality between points and lines. More precisely, parametrize the lines, except those parallel to the $y$-axis, by $(a,b)\in \R^2$:
$$l(a,b)=\{(x,a+bx): x\in\R\}.$$ 
Then if $C\subset\R^2$ is some parameter set and $B=\cup_{(a,b)\in C}l(a,b)$, one checks that
$$B\cap\{(t,y):y\in\R\} = \{t\}\times\pi_t(C)$$
where
$$\pi_t:\R^2\to\R^2,\quad \pi_t(a,b)=a+tb,$$
is essentially an orthogonal projection. Suppose that we can find $C$ such that $\pi(C)=[0,1]$, where $\pi(a,b)=b$, and $\mathcal L^1(\pi_t(C))=0$ for almost all $t$. Then $\mathcal L^2(B)=0$ by Fubini's theorem and taking the union of four rotated copies of $B$ gives the desired set. It is not trivial that such sets $C$ exist but they do. For example, a suitably rotated copy of the product of a standard Cantor set with dissection ratio $1/4$ with itself is such, cf., for example, \cite{M6}, Chapter 10. Restricting $x$ above to a compact subinterval of $\R$ yields a compact Besicovitch set.
\end{proof}

The idea to construct Besicovitch sets using duality between lines and points is due to  Besicovitch from 1964 in \cite{B}, although he gave a geometric construction already in 1919. It was further developed by Falconer in \cite{F3}. We shall see more of this below.

\begin{conjecture}[Kakeya conjecture]
All Besicovitch sets in $\Rn$ have Hausdorff dimension $n$.
\end{conjecture}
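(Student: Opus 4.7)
The plan is to reduce the conjecture to a $\delta$-discretized incidence estimate for tubes and then attack it by combining multi-scale analysis with the polynomial method. First, I would pass from the Hausdorff-dimension formulation to the Kakeya maximal function conjecture: define $f^*_\delta(e) = \sup_T \delta^{-(n-1)} \int_T |f|$, where the supremum runs over $\delta$-tubes of unit length with direction $e \in \Sn$. A standard covering argument (partition the Besicovitch set into balls of side $\delta$, sum the contributions of tubes through each) shows that the bound $\|f^*_\delta\|_{L^n(\Sn)} \lesssim_\varepsilon \delta^{-\varepsilon} \|f\|_{L^n(\Rn)}$ implies $\dim B = n$ for every Besicovitch set $B$. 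So it suffices to establish this maximal inequality, which dualizes to counting incidences between a nearly maximal family of $\delta$-separated $\delta$-tubes and balls of radius $\delta$.

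For the geometric/combinatorial core, I would iterate a trichotomy dividing tube configurations into transverse, plany, and sticky pieces. The transverse case should fall to multilinear restriction-Kakeya estimates of Bennett-Carbery-Tao type, where tubes pointing in quantitatively different directions intersect in small common volumes and a direct $L^n$ computation wins. The plany and sticky cases, where tubes cluster into near-planar fans or coherent bushes, I would rescale to lower-dimensional Furstenberg-type problems, applying the restricted-family projection theorems of the type given in Theorem~\ref{bour} and Theorem~\ref{obproj1} to control how such tube families project into slabs. This is the scheme that produces the best classical bounds: Bourgain's bush argument ($\dim B \geq (n+1)/2$), Wolff's hairbrush ($\dim B \geq (n+2)/2$), and the arithmetic refinements of Katz-Tao.

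To close the remaining gap, I would import the polynomial method. In the finite-field setting Dvir produces a low-degree polynomial vanishing on the Kakeya set and derives a Bezout-type contradiction; over $\Rn$ the analog is Guth's polynomial partitioning, which decomposes $\Rn$ into cells that each tube meets in few pieces. This splits the problem into a cellular case — closed by induction on scale — and a near-surface case, where the tubes concentrate in a $\delta$-neighborhood of an algebraic hypersurface. The near-surface case I would attack by a grains decomposition in the vein of Guth-Zahl, combined with a planebrush argument à la Katz-Zahl and the sticky/plany refinements recently used by Wang-Zahl in $\R^3$.

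The hard part, and precisely why the conjecture remains open in dimension $\geq 4$, is the sticky/near-surface case: tubes concentrated in a $\delta$-tubular neighborhood of an $(n-1)$-dimensional algebraic variety while still realizing $\delta^{-(n-1)}$ distinct directions. Here naive induction becomes circular, because the rescaled subproblem is essentially the same Kakeya problem in one dimension lower with no dimensional gain. Overcoming this requires a genuinely new geometric rigidity statement — essentially a sharp quantitative slicing/projection theorem ruling out such concentrations — and I would expect the Fourier-analytic exceptional-set estimates developed in Sections 3--5 of this survey, combined with an algebraic-geometry input from the polynomial method, to be the decisive ingredients. Without such a new input, I do not see how to avoid the standard logarithmic loss that has stopped every previous approach.
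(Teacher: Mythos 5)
You have not produced a proof, and no proof is possible to check against here: the statement is a \emph{conjecture}, and the paper explicitly records that it is open for $n\geq 3$ (only the planar case is settled, via Davies' Theorem \ref{davies}, which the paper derives from point--line duality and Marstrand's projection theorem rather than from any of the machinery you invoke). What you have written is an accurate survey of the known partial approaches --- the reduction to the Kakeya maximal inequality, Bourgain's bush, Wolff's hairbrush giving $(n+2)/2$ as in Theorem \ref{wolf}, the Katz--Tao arithmetic method, Dvir's finite-field argument, and polynomial partitioning --- but the decisive step is missing, and you say so yourself: the sticky/plany/near-surface case is precisely where every one of these methods stalls, and your text concedes that closing it ``requires a genuinely new geometric rigidity statement'' that you do not supply. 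A proof attempt that ends by identifying the open sub-problem and declaring that you do not see how to resolve it is a research program, not a proof.

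Two further cautions. First, your appeal to Theorem \ref{bour} and Theorem \ref{obproj1} to handle the plany/sticky cases is not something you can take off the shelf: the paper itself explains (in the discussion around Theorem \ref{proj-bes}) that the one-parameter projection families $\pi_t$ relevant to Besicovitch sets are exactly the ones for which no sufficiently strong projection theorem is known, and that Oberlin's Theorem \ref{obproj1} only recovers the trivial lower bound $\dim B\geq 2$. So the projection-theoretic input you hope to use is itself conjectural at the strength you need. Second, even granting the maximal-function reduction, your cellular/near-surface induction is circular as stated, since the rescaled subproblem has the same form with no gain --- which is the known obstruction, not an oversight you can wave away. If you want to write something verifiable against this paper, prove the planar case: that argument (Theorem \ref{davies} via Theorem \ref{proj-bes} with $s=1$ and Marstrand's Theorem \ref{projections}(1)) is complete and short.
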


The Kakeya conjecture is open for $n\geq 3$. I shall discuss partial results later, but let us first see how it follows in the plane and how it is related to projection theorems. The following theorem was proved by Davies in \cite{D}:

\begin{thm}\label{davies}
For every Besicovitch set $B\subset\R^n$, $\dim B \geq 2$. In particular, the Kakeya conjecture is true in the plane.
\end{thm}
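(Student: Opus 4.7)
My plan is to handle the planar case via the duality between lines and points (running the construction in Theorem \ref{bes} backwards) together with Marstrand's projection theorem, and then to reduce higher dimensions to the plane by projecting $B$ onto a 2-plane.

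\textbf{Planar case.} Let $B\subset\R^2$ be a Besicovitch set. After a rotation I may assume a positive-measure set of directions in $B$ have slope in $[0,1]$; a measurable selection then yields a Borel set $E\subset[0,1]$ with $\mathcal L^1(E)>0$ and a Borel function $b\mapsto a(b)$ such that the line $l(a(b),b)=\{(x,a(b)+bx):x\in\R\}$ contains a unit segment inside $B$ whose projection to the $x$-axis is an interval $I_b$ of length at least $1/\sqrt{2}$. The set $C=\{(a(b),b):b\in E\}$ is the graph of a Borel function over a positive-measure set, so $\dim C\geq 1$. Writing $\pi_t(a,b)=a+tb$ and $B_t=\{y:(t,y)\in B\}$, by construction $\pi_t(C_t)\subset B_t$ where $C_t=\{(a(b),b):b\in E,\ t\in I_b\}$. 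A Fubini computation using $\int_E\mathcal L^1(I_b)\,db\geq \mathcal L^1(E)/\sqrt{2}$ produces a positive-measure set $T\subset\R$ on which $C_t$ still contains the graph of a Borel function over a positive-measure slope set, so $\dim C_t\geq 1$ for $t\in T$. The map $\pi_t$ is, up to an affine reparametrization, the orthogonal projection onto the direction $(1,t)/\sqrt{1+t^2}$; as $t$ ranges over $T$ these directions cover a positive-measure arc of $S^1$, so Theorem \ref{projections}(1) gives $\dim\pi_t(C_t)\geq 1$, hence $\dim B_t\geq 1$, for almost every $t\in T$. Since $\{t:\dim B_t\geq 1\}$ then has Hausdorff dimension $1$, Proposition \ref{sectprop} yields $\dim B\geq 1+1=2$.

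\textbf{Higher dimensions.} For $n\geq 3$ and a Besicovitch set $B\subset\R^n$, pick any $V\in G(n,2)$. For each $e\in V\cap S^{n-1}$, $B$ contains a unit segment in direction $e$, whose image under $P_V$ is a unit segment in $V$ in the same direction; hence $P_V(B)$ contains a unit segment in every direction of $V$. Either $\mathcal L^2(P_V(B))>0$, so $\dim P_V(B)=2$, or $P_V(B)$ is a planar Besicovitch set and the planar case applies, and either way $\dim B\geq\dim P_V(B)\geq 2$. The chief obstacle is the measurable-selection-plus-Fubini step in the planar case: one must convert the qualitative statement ``for every direction there is a segment in $B$'' into a Borel graph $C$ whose $\pi_t$-images land inside $B_t$ for a $1$-dimensional set of slices $t$; once this is in place Marstrand's theorem and Proposition \ref{sectprop} finish the job.
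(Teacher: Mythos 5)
Your overall strategy --- dualizing to a parameter set of lines, applying Marstrand's projection theorem, and finishing with Proposition \ref{sectprop}, together with the reduction of $n\geq 3$ to the plane by projecting onto a $2$-plane --- is essentially the route the paper takes (it invokes the argument of Theorem \ref{proj-bes} with $s=1$; your $P_V$ reduction is a fine alternative for the higher-dimensional step). But there is a genuine gap at the key step of your planar argument. You apply Theorem \ref{projections}(1) to conclude that $\dim\pi_t(C_t)\geq 1$ for almost every $t\in T$. Marstrand's theorem, applied to a \emph{fixed} set, says that almost every direction is good \emph{for that set}; here the set $C_t$ varies with $t$, and you need the particular direction $e(t)=(1,t)/\sqrt{1+t^2}$ to be good for the particular set $C_t$. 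Nothing prevents $e(t)$ from lying in the ($t$-dependent) exceptional null set of $C_t$ for every $t$, and a Fubini argument in the $(t,e)$-square does not rescue this, because the diagonal $\{(t,e(t)):t\in T\}$ is itself a null set there. Applying Marstrand to the full graph $C$ instead does not work either, since $\pi_t(C)$ need not be contained in the slice $B_t$: only the segments actually crossing the vertical line $x=t$ contribute to that slice.

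This is precisely the ``technicality'' the paper resolves in the proof of Theorem \ref{proj-bes}, by removing the $t$-dependence through a countable decomposition: replace $B$ by a $G_\delta$ superset of the same dimension, and for each rational $q$ let $C_q$ be the set of $(a,b)$ such that the whole segment $\{(q+s,a+bs):0\leq s\leq 1/2\}$ lies in $B$. Each $C_q$ is $G_\delta$, the sets $\pi(C_q)$ cover the slope set, so some \emph{fixed} $C_q$ satisfies $\mathcal L^1(\pi(C_q))>0$ and hence $\dim C_q\geq 1$; Marstrand applied to this single set gives $\dim\pi_t(C_q)\geq 1$ for almost every $t\in[0,1/2]$, while $\{q+t\}\times\pi_t(C_q)\subset B$ for \emph{all} $t\in[0,1/2]$. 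This device also dispenses with your measurable-selection step, which as written needs further justification (a Borel selection $b\mapsto a(b)$ is not automatic). With the rational-$q$ decomposition substituted for the selection-plus-Fubini step, your proof goes through.
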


The proof of this is, up to some technicalities, reversing the above argument for the proof of Theorem \ref{bes} and using Marstrand's projection Theorem \ref{projections}(1), see the proof of Theorem \ref{proj-bes} below. But let us now look more generally relations between projection theorems and lower bounds for the Hausdorff dimension of Besicovitch sets.

We can parametrize the lines in $\Rn$, except those orthogonal to the $x_1$-axis, by $(a,b)\in \R^{n-1}\times\R^{n-1}$:
$$l(a,b)=\{(x,a+bx): x\in\R\}.$$ 
Then again if $C\subset\R^{2(n-1)}$ is parameter set and $B=\cup_{(a,b)\in C}l(a,b)$ we have for $t\in\R$,
$$B\cap\{(t,y):y\in\R^{n-1}\} = \{t\}\times\pi_t(C)$$
where
$$\pi_t:\R^{2(n-1)}\to\R^{n-1},\quad \pi_t(a,b)=a+tb,\quad t\in\R.$$
These are projections of Section 4 with $k=1, m=n-1$. 
Suppose now that $\pi(C)=[0,1]^{n-1}$, where $\pi(a,b)=b$. Then in particular, $\dim C\geq n-1$. The projection theorem we would need to solve the Kakeya conjecture should tell us that $\dim\pi_t(C)=n-1$ for almost all $t\in \R$. Then we could conclude by Proposition \ref{sectprop} that $\dim B=n$. In the plane such projection theorem is true; it is just Marstrand's projection theorem. However, in higher dimensions we don't know of any such projection theorem since we now only have a one-dimensional family of projections. Notice that the space of all orthogonal projections from $\R^{2(n-1)}$ onto $(n-1)$-planes is $(n-1)^2$-dimensional. More precisely, we can state

\begin{thm}\label{proj-bes} Let $0<s\leq n-1$ and $\pi(x,y)=y$ for $x,y\in\R^{n-1}$. Suppose that the following projection theorem holds: For every Borel set $C\subset\R^{2(n-1)}$ with $\mathcal H^{n-1}(\pi(C))>0$ we have $\dim\pi_t(C)\geq s$ for almost all $t\in \R$. Then for every Besicovitch set $B\subset\Rn$, we have $\dim B\geq s+1$. In particular, if this projection theorem holds for $s=n-1$, the Kakeya conjecture is true.
\end{thm}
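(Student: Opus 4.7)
My plan is to invert the duality between lines and points underlying the construction in Theorem \ref{bes}: parametrize a positive-measure family of the line segments of $B$ by a set $C \subset \R^{2(n-1)}$, apply the hypothesized projection inequality to $C$, and recover a lower bound on the dimensions of horizontal slices of $B$ via Proposition \ref{sectprop}.

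First I would perform a geometric reduction. Hausdorff dimension is rotation invariant, so after rotating $B$ I may assume it contains a unit segment in every direction $(1,b)/\sqrt{1+|b|^2}$ for $b$ in some compact set $Q \subset \R^{n-1}$ of positive $(n-1)$-measure. Each such segment has the form $\{(x, a+bx) : x \in J\}$ for some $a \in \R^{n-1}$ and some $x$-interval $J \subset \R$ of length $1/\sqrt{1+|b|^2}$, which is bounded below on the compact set $Q$. Covering $\R$ by countably many dyadic subintervals of that length and pigeonholing, I pass to a fixed compact interval $I \subset \R$ and a compact subset $K \subset Q$ with $\mathcal H^{n-1}(K) > 0$ such that for every $b \in K$ some $a \in \R^{n-1}$ satisfies $\{(x, a+bx) : x \in I\} \subset B$.

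With this in hand, set
$$C = \{(a,b) \in \R^{n-1} \times K : \{(x, a+bx) : x \in I\} \subset B\}.$$
Then $\pi(C) = K$, so $\mathcal H^{n-1}(\pi(C)) > 0$, and the hypothesis of the theorem yields $\dim \pi_t(C) \geq s$ for almost every $t \in \R$. For each $t \in I$ and each $(a,b) \in C$, the point $(t, a+tb) = (t, \pi_t(a,b))$ lies in $B$, so the slice $B_t := \{y \in \R^{n-1} : (t,y) \in B\}$ contains $\pi_t(C)$, hence $\dim B_t \geq s$ for almost every $t \in I$. Since $I$ has positive $\mathcal H^1$-measure, the set $\{t : \dim B_t \geq s\}$ has Hausdorff dimension at least $1$, and Proposition \ref{sectprop} delivers $\dim B \geq 1+s$.

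The main technical hurdle I foresee is arranging that $C$ is Borel, so that the projection hypothesis legitimately applies. Assuming $B$ itself is Borel (which one may always do for the purposes of the Kakeya conjecture, since Theorem \ref{bes} supplies compact Besicovitch sets) and $B$ is taken to be $G_\delta$, the membership condition defining $C$ reduces to a countable intersection of Borel conditions by continuity in $x$; for a general Besicovitch set one instead obtains $C$ analytic, which still suffices because the Frostman/energy arguments underlying the projection theorem extend to analytic sets. The remaining steps are purely geometric.
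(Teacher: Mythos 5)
Your argument is correct and follows essentially the same route as the paper's proof: reduce to a $G_\delta$ Besicovitch set, pigeonhole the segments over a countable family of base $x$-intervals to obtain a Borel parameter set $C$ with $\mathcal H^{n-1}(\pi(C))>0$, apply the hypothesized projection theorem to $C$, and conclude via Proposition \ref{sectprop}. The differences (dyadic intervals versus rational left endpoints, the initial rotation, passing to a compact $K$) are cosmetic.
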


\begin{proof} We may assume that $B$ is a $G_{\delta}$-set, since any set in $\R^{n-1}$ is contained in a $G_{\delta}$-set with the same dimension.  
For $a\in\R^{n-1},b\in[0,1]^{n-1}$ and $q\in\Q$ denote by $I(a,b,q)$ the line segment $\{(q+t,a+bt):0\leq t\leq 1/2\}$ of length less than  $1$. Let $C_q$ be the set of~$(a,b)$ such that $I(a,b,q)\subset B$. Then each $C_q$ is a $G_{\delta}$-set, because for any open set $G$ the set of~$(a,b)$ such that $I(a,b,q)\subset G$ is open. 
Since for every $b\in[0,1]^{n-1}$ some $I(a,b,q)\subset B$, we have $\pi(\cup_{q\in\Q}C_q)=[0,1]^{n-1}$, so there is $q\in\Q$ for which 
$\mathcal{H}^{n-1}(\pi(C_q))>0$. Then by our assumption, for almost all $t\in\mathbb{R}$, $\dim\pi_t(C_q)\geq s$. 
We now have for $0\leq t\leq 1/2$,
\begin{equation*}
\{q+t\}\times\pi_t(C_q)=\{(q+t,a+bt):(a,b)\in C_q\}\subset B\cap \{(x,y):x=q+t\}.
\end{equation*}
Hence for a positive measure set of $t$, vertical $t$-sections of~$B$ have dimension at least $s$. By 
 Proposition \ref{sectprop} we obtain that $\dim B\geq s+1$.
\end{proof}

Let us try to apply Oberlin's projection theorem \ref{obproj1} together with Theorem \ref{proj-bes}. We have to apply it in $\R^{2(n-1)}$ with $k=1, m=n-1$. We have 
$\dim C \geq n-1$, so we get $\dim\pi_t(C)\geq n-1-(n-2)=1$, thus yielding the lower bound $2$ for the Hausdorff dimension of Besicovitch sets. But this also follows by Theorem \ref{davies}, and by other methods, see \cite{M6}.
Unfortunately no known method seems to give any better projection theorem for the family $\pi_t$. From $\mathcal H^{n-1}(C)>0$ we could only hope to get $\dim\pi_t(C)\geq (n-1)/2$, at least when $n$ is odd. To see this let $p=(n-1)/2$ and  $C=\{(a,b)\in\R^{n-1}\times\R^{n-1}:a_1=\dots=a_p=b_1=\dots =b_p=0\}$. Then $\mathcal H^{n-1}(C)=\infty$ and $\pi_t(C)=\{x\in\R^{n-1}:x_1=\dots=x_p=0\}$, so $\dim\pi_t(C)=(n-1)/2$. Even if this estimate were true it would only give the lower bound $(n+1)/2$ for the dimension of Besicovitch sets. This has been known since the 1980s by different methods, see \cite{M6}, Section 23.4. The only hope for better estimates via projections would seem to be that instead of only using the information $\mathcal H^{n-1}(C)>0$ we should use that $C$ has positive measure projection on the second factor of $\R^{n-1}\times\R^{n-1}$ Often having one big projection does not help much. However F\"assler and Orponen were able to make use of that in \cite{FO}, and since we are dealing with a very special family of mappings maybe it could help here too. Moreover, in the known cases the generic dimension of the projections agrees with the largest one. 

Yu proved in \cite{Y} that the Kakeya conjecture is equivalent to the following: for any Besicovitch set $B\subset\Rn$ and for any $0<m<n$,~ $\dim P_V(B)$ is constant for $V\in G(n,m)$. The idea is simple but clever: lift your Besicovitch set $B$ from $\Rn$ to $\R^{2n-1}$ in the way it projects back to $\Rn$ as $B$ and it projects to some $n$-dimensional subspace of $\R^{2n-1}$ as a Besicovitch set where all the defining lines go through the origin. Then this latter projection has positive $n$-dimensional measure.

So the Kakeya conjecture is true in the plane and open in higher dimensions. The following results give the best known lower bounds for the Hausdorff dimension of Besicovitch sets.

Wolff, based on some earlier work of Bourgain, proved in  \cite{W1}

\begin{thm}\label{wolf}
The Hausdorff dimension of every Besicovitch set in $\Rn$ is at
least $(n+2)/2$.
\end{thm}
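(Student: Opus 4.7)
My plan is to follow Wolff's hairbrush argument. A standard $\delta$-discretization, using a Frostman measure on $B$ together with pigeonholing on a dyadic scale, reduces the statement to the following: for every family $\mathcal{T}$ of $N\sim\delta^{-(n-1)}$ unit-length $\delta$-tubes in $\R^n$ pointing in $\delta$-separated directions,
\[\mathcal{L}^n\!\Bigl(\bigcup_{T\in\mathcal{T}} T\Bigr) \gtrsim |\log\delta|^{-C}\,\delta^{(n-2)/2}.\]
Exponentiating, this lower-bounds $\dim B$ (and in fact the lower Minkowski dimension) by $n-(n-2)/2=(n+2)/2$. I would first carry out this reduction carefully; it is routine and parallels Chapter 22 of \cite{M6}.

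The core of the argument is the hairbrush. After a preliminary pigeonhole that trims $\mathcal{T}$ so that every point of $\bigcup T$ is covered by roughly the same number of tubes, choose a ``stem'' $T_0\in\mathcal{T}$ and call $T\in\mathcal{T}$ a \emph{bristle} of $T_0$ if the $1$-dimensional length of $T\cap T_0$ is bounded below by a universal constant. A selection argument produces a stem $T_0$ with at least $\sim\delta^{-(n-2)}$ bristles. The decisive geometric observation is that any two bristles lie, up to thickness $\delta$, in a common $2$-plane through the axis of $T_0$; the family of such $2$-planes is parametrized by $S^{n-2}$. Thus the bristles partition into \emph{leaves}, one per $2$-plane. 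Within each leaf we face a planar $\delta$-tube configuration, so the discretized two-dimensional case (the quantitative form of Theorem~\ref{davies}) supplies a planar union of area $\gtrsim|\log\delta|^{-1}$, which thickens to $n$-volume $\gtrsim \delta\cdot |\log\delta|^{-1}$ inside the $\delta$-slab around the leaf.

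The final step is to add the contributions of the different leaves. Since all leaves share the axis of $T_0$, the naive sum overcounts badly near the stem. This is the genuinely hard step, and it is fixed by a \emph{two-ends reduction}: restrict every bristle to the portion at distance $\gg \delta^{\varepsilon}$ from $T_0$, where the hosting $2$-planes are genuinely transverse. A second-moment estimate, bounding $\|\sum_T\chi_T\|_{L^2}^2$ by counting intersecting pairs of bristles living in distinct leaves, then shows that the effective number of mutually independent leaves is $\sim\delta^{-(n-2)/2}$ rather than $\delta^{-(n-2)}$, producing the square-root savings that is the exact origin of the exponent $(n+2)/2$. Everything else is bookkeeping around Davies's planar theorem and the Frostman-style discretization; the leaf-overlap/incidence estimate is the new ingredient of \cite{W1} and is the only place where dimension $n\geq 3$ really makes itself felt.
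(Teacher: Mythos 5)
The paper itself offers no proof of this theorem: it is Wolff's result, cited as \cite{W1}, and the survey only records the Kakeya maximal inequality \eqref{hairbrush} from which the dimension bound ``follows rather easily.'' Your choice of the hairbrush method is therefore the right one --- it is Wolff's own --- but as written the sketch has two genuine problems. First, reducing the theorem to a lower bound for the Lebesgue measure of a union of \emph{full} $\delta$-tubes only controls the (lower) Minkowski dimension of $B$; it does not bound the Hausdorff dimension, because a covering of $B$ may use balls of wildly different sizes, and after the dyadic pigeonhole each tube meets the selected block of the covering in only a fraction $\lambda$ of its length. To get Hausdorff dimension you must prove the $\lambda$-dependent version --- equivalently the $L^{(n+2)/2}$ maximal inequality \eqref{hairbrush} --- and track how the lower bound degrades in $\lambda$; this is not routine bookkeeping, and it is also why the two-ends reduction is needed in the first place (to prevent the shaded part of a tube from concentrating near one point, which would destroy the bush and hairbrush counts).

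Second, the combinatorial accounting at the core is off. A bristle and the stem are individually coplanar with the axis $\ell_0$ of $T_0$ (two \emph{distinct} bristles are in general not in a common $2$-plane), and the number of bristles one can extract at angle $\sim\theta$ from a popular stem is $\gtrsim\theta/(\lambda\delta)$ up to logarithms, where $\lambda$ denotes the measure of the union --- not $\delta^{-(n-2)}$. More importantly, the square-root saving does not come from an $L^2$ estimate showing that only $\delta^{-(n-2)/2}$ leaves are ``independent'': after the two-ends truncation the $\delta$-neighborhoods of the distinct $2$-planes through $\ell_0$ are essentially pairwise disjoint, one sums \emph{all} of their planar (C\'ordoba/Davies) contributions to get hairbrush volume $\gtrsim(\#\text{bristles})\cdot\delta^{n-1}\gtrsim\theta\delta^{n-2}/\lambda$, and comparing this with the trivial upper bound $\lambda$ yields the self-improving inequality $\lambda^{2}\gtrsim\theta\delta^{n-2}$, hence $\lambda\gtrsim\delta^{(n-2)/2}$. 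So the mechanism you describe for the exponent $(n+2)/2$ is not the one that works, and the step asserting ``the effective number of mutually independent leaves is $\sim\delta^{-(n-2)/2}$'' would not survive an attempt to make it precise.
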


Wolff's method is geometric. He proved the following Kakeya maximal function inequality which yields Theorem \ref{wolf} rather easily. 

\begin{equation}\label{hairbrush}
\|\mathcal K_{\delta}f\|_{L^{\frac{n+2}{2}}(\Sn)}\leq C(n,\epsilon)\delta^{\frac{2-n}{2+n}-\epsilon}\|f\|_{L^{\frac{n+2}{2}}(\Rn)}
\end{equation}
for all $\delta, \epsilon>0$. Here 
$$\mathcal K_{\delta}f(e)=\sup_{a\in\R^n}\frac{1}{\mathcal L^n(T_e^{\delta}(a))}\int_{T_e^{\delta}(a)}|f|\,d\mathcal L^n,$$
where $T_e^{\delta}(a)$ is the tube with center $a\in\Rn$, direction $e\in \Sn$, width $\delta$ and length $1$.

Wolff's estimate $\dim B \geq 3$ is still the best known in $\R^4$. 

Bourgain introduced in \cite{B2} a combinatorial method, further developed by Katz and Tao \cite{KT1} in \cite{KT2}, which led to the following:

\begin{thm}
For any Besicovitch set $B$ in $\Rn$, $\dim B\geq (2-\sqrt{2})(n-4)+3$. 
\end{thm}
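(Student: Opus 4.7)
The plan is to argue by contradiction via the $\delta$-discretized Kakeya problem, combining geometric structure theorems for near-extremal tube configurations with arithmetic combinatorial inequalities in the spirit of Bourgain.

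First I would reduce to a discrete statement: for every small $\delta>0$, a Besicovitch set of Hausdorff dimension $\alpha$ yields, after thickening, a collection $\mathcal{T}$ of $\delta\times 1$ tubes in a $\delta$-separated set of directions of $S^{n-1}$ (so $|\mathcal{T}|\approx\delta^{-(n-1)}$) whose union has Lebesgue measure at most about $\delta^{n-\alpha}$. The theorem reduces to showing this configuration is impossible unless $\alpha\geq(2-\sqrt{2})(n-4)+3$. Standard two-ends/pigeonholing reductions restrict attention to a subcollection $\mathcal{T}'$ all of whose tubes meet a common shading set $E(T)\subset T$ of prescribed relative density.

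Next I would extract the three structural properties of a near-extremal tube configuration that Katz and Tao isolate. After further pigeonholing one may assume $\mathcal{T}'$ is \emph{sticky} (the direction-to-tube map is approximately Lipschitz at scale $\delta$, so nearby directions give spatially close tubes), \emph{plany} (at a typical high-multiplicity point, the directions of tubes through that point concentrate in a neighbourhood of some hyperplane), and \emph{grainy} (the union decomposes into rectangular ``grains'' of prescribed aspect ratios at each dyadic scale). Each property is proved by combining an $L^p$ orthogonality/pigeonholing argument with a geometric input such as Wolff's hairbrush estimate \eqref{hairbrush} or Bourgain's bush lemma; any failure of one of these properties directly yields a better bound than the target via an $X$-ray-type inequality, so one is free to assume all three.

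The third step is the arithmetic core. Stickiness lets one parametrize the direction map by a Lipschitz function on a $(n-1)$-dimensional $\delta$-set, and planiness together with graininess converts the incidence structure of $\mathcal{T}'$ into an additive configuration in $\R^{n-1}$: certain slices of the tube set behave like sets with small triple sum--difference $A+A-A$. A Plünnecke--Ruzsa style inequality, which is the crucial combinatorial input, then forces a system of inequalities relating $\alpha$, the grain dimensions, and the direction-set dimension. Optimising this system reduces to a quadratic in the aspect ratios whose balancing produces precisely the exponent $(2-\sqrt{2})(n-4)+3$, with the constant $2-\sqrt{2}$ arising as a root of the quadratic.

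The hardest step will be the structural reduction of Step~2: showing rigorously that a Kakeya set of near-minimal dimension must simultaneously be sticky, plany, and grainy requires a delicate iterated multi-scale pigeonholing, and is the genuinely deep contribution of Katz--Tao. By comparison, once the structural picture is in place the sum--difference inequality and the final optimisation producing the constant $2-\sqrt{2}$ are comparatively routine bookkeeping.
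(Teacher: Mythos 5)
The survey states this theorem without proof, citing Bourgain \cite{B2} and Katz--Tao \cite{KT1}, \cite{KT2}, so there is no in-paper argument to compare against; judged against those sources, your outline conflates two distinct programs and would not produce the stated bound. The sticky/plany/grainy structural trichotomy you place at the centre of Step~2 is the machinery of Katz, {\L}aba and Tao for Besicovitch sets in $\R^3$, and of Katz--Zahl -- that is, of the $5/2+\epsilon$ theorem stated immediately after this one in the survey; it is not what produces $(2-\sqrt{2})(n-4)+3$, and it is not known to run in general dimension $n$ in the form you describe. The actual proof is Bourgain's ``arithmetic method'' as sharpened in \cite{KT1} and \cite{KT2}: after the $\delta$-discretization and two-ends reduction you correctly describe, one slices the union of tubes by parallel hyperplanes such as $\{x_1=0\}$, $\{x_1=1/2\}$, $\{x_1=1\}$, obtaining $\delta$-discretized sets $A$, $B$ and a relation $G\subset A\times B$ (pairs joined by a tube of the family) for which the sum set $\{a+b:(a,b)\in G\}$ lands in the small middle slice while the difference set $\{a-b:(a,b)\in G\}$ must essentially exhaust the $\approx\delta^{-(n-1)}$ directions. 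The deep ingredient is then a purely combinatorial improved sums--differences inequality -- proved by elementary but clever arguments (vertical-line tests, repeated refinement, and iteration over several slices), not by Pl\"unnecke--Ruzsa alone -- and the constant $2-\sqrt{2}$ emerges as the limiting exponent of this iteration (a fixed point of the recursion), not from optimising grain aspect ratios. No hairbrush estimate, planiness, or graininess enters.

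A second, smaller issue: even granting the structural reductions of your Step~2, the claim that ``any failure of one of these properties directly yields a better bound than the target'' is exactly the delicate point that in the literature is carried out only in $\R^3$, only for an $\epsilon$-improvement, and only after enormous effort; asserting it in $\R^n$ for the target exponent $(2-\sqrt{2})(n-4)+3$ is a genuine gap, since the known X-ray and hairbrush inequalities do not reach that exponent for large $n$. To prove the stated theorem you should discard Step~2 entirely and instead develop the slicing set-up together with the iterated sums--differences estimates of \cite{KT2}, where the quadratic whose root gives $\sqrt{2}$ actually lives.
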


This is the best known lower bound for $n\geq 5$ . Quite recently Katz and Zahl \cite{KZ} were able to establish an epsilon improvement on Wolff's bound $5/2$ in $\R^3$. Thus in $\R^3$ the best known estimate is

\begin{thm}
For any Besicovitch set $B$ in $\R^3$, $\dim B\geq 5/2+\epsilon$ where $\epsilon$ is a small constant. 
\end{thm}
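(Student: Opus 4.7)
The plan is to argue by contradiction: assume $B\subset\R^3$ is a Besicovitch set with $\dim B\le 5/2+\epsilon_0$ for some small $\epsilon_0>0$ to be determined, and derive a contradiction. By the standard passage to a $\delta$-discretized Kakeya problem, this assumption produces, for arbitrarily small $\delta>0$, a $\delta$-separated family $\{T_e\}_{e\in E}$ of unit $\delta$-tubes, one per $\delta$-direction (so $|E|\sim\delta^{-2}$), whose union has $\mathcal L^3$-measure at most $\delta^{1/2-\epsilon_0}$ up to logarithmic factors. The baseline $\dim B\ge 5/2$ is already delivered by Wolff's hairbrush inequality \eqref{hairbrush}: one picks a stem tube $T_0$, observes that the tubes meeting $T_0$ within a fixed angular window lie essentially in a $\delta$-slab around a plane through $T_0$, and applies planar $L^2$ orthogonality together with the two-dimensional Kakeya estimate from Theorem~\ref{davies}. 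My goal is to upgrade this argument by exploiting the saturation of each of its steps.

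Next I would perform a structural analysis of near-extremizers. Standard pigeonholing forces a quantitative \emph{planiness} alternative: at many scales, for most stems $T_0$, almost all tubes of the bush of $T_0$ lie in a $\delta^{1-\eta}$-neighborhood of a single two-plane $\Pi(T_0)$ through $T_0$. Combined with a scale-induction, this further yields a form of \emph{stickiness}, meaning that tubes with nearby directions are assigned essentially the same plane. Together the two properties say that $B$ is coverable by a controlled collection of thin plates, each containing a nearly maximal planar Kakeya configuration.

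The contradiction is then extracted from incidence geometry of $\delta$-tubes that are essentially tangent to a family of $2$-planes in $\R^3$, in the spirit of the work of Katz--Laba--Tao on sticky Besicovitch sets and of Guth's polynomial method. Uniform planiness across scales would force the direction-to-plane map $e\mapsto\Pi(e)$ to factor essentially through a $1$-parameter family of orientations on $S^2$, which can accommodate only $O(\delta^{-1})$ separated directions, while a Besicovitch set demands $\delta^{-2}$ of them. Playing the planey case against the non-planey case (where Wolff's $L^2$ argument can be sharpened strictly past $5/2$) then yields the desired bound $\dim B\ge 5/2+\epsilon_0$.

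The main obstacle is the multi-scale bookkeeping in the second and third steps. The planiness hypothesis produced by saturation is only approximate and holds only at a positive-density set of scales, so one must set up a hierarchy of scales and $\eta$-parameters such that the structural rigidity is preserved through the scale induction and the incidence-geometric gain survives the cumulative $\eta$-losses. Tuning these parameters so that a strictly positive $\epsilon_0$ falls out in the end, and closing the plany/non-plany dichotomy into a single quantitative estimate, is the delicate heart of the Katz--Zahl argument.
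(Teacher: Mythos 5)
You should first be aware that the survey does not prove this theorem at all: it is quoted from Katz and Zahl \cite{KZ}, and the surrounding text explicitly describes their argument as very involved, singling out the algebraic polynomial methods as a new ingredient. So there is no in-paper proof to compare against; what you have written is an outline of the external argument and has to be judged on its own terms.

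As an outline it does capture several genuine features of the Katz--Zahl strategy: the reduction to a $\delta$-discretized family of tubes, Wolff's hairbrush inequality \eqref{hairbrush} as the $5/2$ baseline, and the structural analysis of near-extremizers via stickiness and planiness in the spirit of Katz, {\L}aba and Tao. But the step where you actually extract a contradiction is wrong as stated, and that is exactly where all the difficulty lives. Planiness does not force the direction-to-plane map $e\mapsto\Pi(e)$ to factor through a one-parameter family of orientations, and it does not cap the number of admissible directions at $O(\delta^{-1})$: the Heisenberg-group and $SL_2$-type configurations are sticky, plany and grainy while still carrying a full $\delta^{-2}$-separated direction set, which is precisely why the analysis cannot stop at planiness. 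Ruling out these structured near-counterexamples requires additional arithmetic input of sum--product and ring-conjecture type in the spirit of \cite{B3}, and in Katz--Zahl also polynomial partitioning and a careful treatment of the non-sticky case in order to reach the Hausdorff (rather than Minkowski) dimension. Your sketch also assumes without argument that the plany versus non-plany dichotomy closes with a uniform $\epsilon_0$, which is the quantitative heart of the matter. In short, the skeleton is recognizable, but the decisive step is missing and the mechanism you propose in its place is false.
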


The arguments of Katz and Zahl are very involved and complicated combining many earlier ideas. A new feature are the algebraic polynomial methods, first used by Dvir in \cite{Dv} to solve the Kakeya conjecture in finite fields. The polynomial methods have recently been used in many connections, an excellent treatise on these is Guth's book \cite{G}. Orponen applied them to projections in \cite{O9}. 

Let us now look at some relations between unions of lines and line segments. Keleti made the following conjecture in \cite{Ke2}:

\begin{conjecture}\label{keleti1} 
If $A$ is the union of a family of line segments in $\R^n$ and $B$ is the union of the corresponding lines, then $\dim A=\dim B$.
\end{conjecture}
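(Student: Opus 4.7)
The inequality $\dim A \le \dim B$ is immediate from $A \subseteq B$; the content of the conjecture is the reverse. My plan is to pass through a direction-augmented lift. Index the family by $\gamma \in \Gamma$, with line $L_\gamma$ of direction $e_\gamma \in S^{n-1}$ containing the unit segment $S_\gamma$, and set
$$
\tilde A = \{(p, e_\gamma) : p \in S_\gamma,\ \gamma \in \Gamma\},\qquad \tilde B = \{(p, e_\gamma) : p \in L_\gamma,\ \gamma \in \Gamma\},
$$
both in $\R^n \times S^{n-1}$. The first payoff is a \emph{shift argument}. Because each unit segment $S_\gamma$ tiles its line $L_\gamma$ under integer translations in direction $e_\gamma$, the isometries $\Phi_k(p,e) = (p + ke, e)$ of $\R^n \times S^{n-1}$ satisfy
$$
\tilde B = \bigcup_{k \in \Z} \Phi_k(\tilde A).
$$
Countable stability and isometry-invariance of Hausdorff dimension then yield $\dim \tilde B = \dim \tilde A$, and since the projection $\pi: \R^n \times S^{n-1} \to \R^n$ is Lipschitz and carries $\tilde B$ onto $B$, we get $\dim B \le \dim \tilde B = \dim \tilde A$. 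The conjecture would therefore follow from $\dim \tilde A = \dim A$.

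The main obstacle is precisely this last identity. The fibre $\pi^{-1}(p) \cap \tilde A = \{e_\gamma : p \in S_\gamma\}$ records all segment-directions witnessed at $p$; the trivial bound $\dim \tilde A \le \dim A + (n-1)$ is hopelessly weak, and Proposition~\ref{sectprop} runs in the wrong direction to supply the reverse. The crux of the proof is thus: for $\mathcal H^s$-almost every $p \in A$ with $s = \dim A$, are only ``few'' distinct segment-directions present at $p$? Kakeya-type configurations suggest this is delicate—every point of a Besicovitch set may see a positive-dimensional family of directions—so this cannot be established by soft measure-theoretic means alone.

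A plausible route to controlling the fibres is a dyadic direction decomposition: partition $S^{n-1}$ into countably many caps $D_i$ of small diameter, split $\Gamma$ accordingly, and reduce to the ``nearly parallel'' case on each $D_i$. In the genuinely parallel case (single direction $e$) the union of lines is a cylinder $K_e + \R e$ over the base-point set $K_e \subseteq e^\perp$ with $\dim(K_e + \R e) = \dim K_e + 1$, and the union of segments has the same dimension by~\eqref{producth}; one then hopes to perturb away from the parallel limit using quantitative transversality, exploiting that segments of nearly-equal directions are nearly-parallel cylinders whose intersections are controllably small. The hard part will be that within a single cap the multiplicity issue persists in full force: one must rule out that a large-dimensional subset of $A$ is re-used by segments of transverse directions within $D_i$, a statement that is essentially a quantitative refinement of the slicing theorems of Theorem~\ref{marthm} applied in the direction variable, and which seems to require genuinely new input beyond the tools surveyed above. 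This is why the conjecture remains open for $n \ge 3$.
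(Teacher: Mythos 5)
You are being asked about Conjecture \ref{keleti1}, which the paper does not prove: it records that the statement is known only in the plane (Theorem \ref{keleti}, due to Keleti, proved elsewhere by a direct covering argument) and is open for $n\ge 3$, so no complete argument could be expected here, and to your credit you say plainly that your key step is missing. Before that step, your reduction is essentially sound: after a countable decomposition of the family by a lower bound on the segment lengths you may assume unit segments, and then the shift identity $\tilde B=\bigcup_{k\in\Z}\Phi_k(\tilde A)$ is correct. (One small slip: $\Phi_k(p,e)=(p+ke,e)$ is \emph{not} an isometry of $\R^n\times S^{n-1}$, since the translation depends on $e$; it is, however, bi-Lipschitz with constants depending on $k$, which is all you need for $\dim\Phi_k(\tilde A)=\dim\tilde A$.)

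The genuine gap is worse than ``the hard step is unproved'': your sufficient condition $\dim\tilde A=\dim A$ is false in general, so the lift replaces the conjecture by a strictly stronger statement that fails even in trivial cases. Take $A=\overline{B(0,1)}\subset\R^2$ written as the union of all unit segments it contains; for every $p$ with $|p|<1/2$ and every $e\in S^1$ the segment $[p-e/2,p+e/2]$ lies in the ball, so $\tilde A\supset B(0,1/2)\times S^1$ and $\dim\tilde A=3>2=\dim A$, while the conjecture holds trivially for this family because $\dim B=2$. Thinning to one segment per line (say, the chord-centred one) does not help: the fibre over every $p$ with $|p|\le 1/2$ is still all of $S^1$. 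The point is that the projection $\pi$ genuinely collapses the direction fibres, and the conjecture is perfectly consistent with $\dim\tilde A>\dim A$, so the chain $\dim B\le\dim\tilde B=\dim\tilde A$ cannot close; any repair must quantify how much $\pi$ collapses $\tilde A$ versus $\tilde B$, which is exactly the multiplicity problem you isolate and is the real content of the conjecture. Your closing heuristic (dyadic caps, the parallel cylinder case via \eqref{producth}, perturbation off the parallel limit) is a reasonable sketch of why the problem is hard, but it is not an argument; the partial results that do exist (the planar case, and the hyperplane versions in \cite{FM} and \cite{HKM}) proceed by different, direct methods rather than through this lift.
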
 

This is true in the plane, as proved by Keleti:

\begin{thm}\label{keleti} 
Conjecture \ref{keleti1} is true in $\R^2$.
\end{thm}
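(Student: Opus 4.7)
The inclusion $A\subseteq B$ gives $\dim A\leq\dim B$ for free, so the content of the theorem is the reverse inequality $\dim B\leq\dim A$. The plan is to exploit the point--line duality already used in the proof of Theorem~\ref{bes}: after splitting the family and swapping coordinate axes if needed, every line in the family has the form $\ell(a,b)=\{(x,a+bx):x\in\R\}$ for some $(a,b)\in F\subseteq\R^2$ with $|b|\leq 1$, and $B=\bigcup_{(a,b)\in F}\ell(a,b)$. A further countable decomposition reduces matters to $F$ bounded. The strategy is then to prove $\dim B\leq\dim F+1$ and $\dim A\geq\min\{\dim F,1\}+1$, and to observe that these two bounds already force $\dim A\geq\dim B$.

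For the upper bound, the polynomial map $\Phi(a,b,t)=(t,a+bt)$ is locally Lipschitz and satisfies $B=\Phi(F\times\R)$. A direct covering argument---covering $F$ by sets $E_j$ of diameter $\delta_j$ and noting that $\Phi(E_j\times[-N,N])$ lies in a parallelogram of width $\lesssim_N\delta_j$---gives $\mathcal{H}^{s+1}(B\cap[-N,N]^2)\lesssim_N\mathcal{H}^s(F)$ for every $s>\dim F$, whence
\begin{equation*}
\dim B\leq\dim F+1.
\end{equation*}

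For the lower bound, each segment projects to a nondegenerate $x$-interval $[\alpha(a,b),\beta(a,b)]$, so setting $F_{p,q}=\{(a,b)\in F:\alpha(a,b)\leq p<q\leq\beta(a,b)\}$ for $p,q\in\Q$ gives $F=\bigcup_{p<q}F_{p,q}$, and some $F_{p,q}$ realizes $\dim F$; after translating in $x$, assume $p=0$, $q=1$. For every $t\in[0,1]$ and every $(a,b)\in F_{p,q}$ the point $(t,a+bt)$ lies on the underlying segment, hence in $A$, so
\begin{equation*}
A\cap\{x=t\}\supseteq\{t\}\times\pi_t(F_{p,q}),\qquad\pi_t(a,b)=a+bt.
\end{equation*}
Marstrand's theorem (Theorem~\ref{projections}(1)) applied to the one-parameter family $\pi_t$---which after the reparametrization $t=\tan\theta$ is precisely the family of orthogonal projections onto non-vertical lines through the origin and satisfies the transversality bound \eqref{projineq} uniformly on bounded intervals of $t$---gives $\dim\pi_t(F_{p,q})\geq\min\{\dim F,1\}$ for almost every $t\in[0,1]$. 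Proposition~\ref{sectprop} with the $x$-coordinate as base then yields $\dim A\geq 1+\min\{\dim F,1\}$. If $\dim F\leq 1$ this is $\geq\dim F+1\geq\dim B$; if $\dim F>1$ it is $\geq 2\geq\dim B$ since $B\subseteq\R^2$. Either way $\dim A\geq\dim B$.

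The step I expect to be the main obstacle is making the invocation of Marstrand's theorem for the family $\pi_t$ fully rigorous, since $F$ sits in the parameter plane rather than in a Grassmannian: one must either pass to the angle parameter $\theta$ (controlling the Jacobian coming from the reparametrization, which is bounded once $t$ is confined to a bounded interval, as arranged at the outset) or verify the projection-type inequality \eqref{projineq} directly for $\pi_t$. Both the covering estimate used for the upper bound and the countable decomposition of segments by rational endpoints are routine once the dualization is set up.
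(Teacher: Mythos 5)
The paper does not actually prove Theorem \ref{keleti}; it attributes the result to Keleti \cite{Ke2}, whose argument is an elementary covering/rescaling argument carried out directly on the unions of segments and lines, valid for completely arbitrary families. Your route---dualize to a parameter set $F\subset\R^2$, prove $\dim B\leq\dim F+1$ by a Lipschitz covering estimate, and prove $\dim A\geq 1+\min\{\dim F,1\}$ by slicing $A$ vertically, identifying the slices with $\pi_t(F_{p,q})$, and applying Theorem \ref{projections}(1) plus Proposition \ref{sectprop}---is a genuinely different argument, and it is the natural one in the context of this survey (it is exactly the mechanism behind Theorems \ref{davies} and \ref{proj-bes}). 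The individual estimates are right: the covering bound for $\Phi(E_j\times[-N,N])$ does give $\dim B\leq\dim F+1$ for arbitrary $F$; the rational-endpoint decomposition $F=\bigcup_{p<q}F_{p,q}$ is legitimate (though countable stability only gives $\sup_{p,q}\dim F_{p,q}=\dim F$, not attainment, so you should work with $\dim F_{p,q}>\dim F-\epsilon$ and let $\epsilon\to0$); and the family $\pi_t(a,b)=a+tb$, $t\in[0,1]$, does satisfy \eqref{projineq}, so Marstrand's theorem applies to it.

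The genuine gap is measurability. Conjecture \ref{keleti1} is stated for an \emph{arbitrary} family of segments, so the parameter sets $F_{p,q}$ need not be Borel or analytic. Theorem \ref{projections}(1) rests on Frostman's lemma (equivalently Theorem \ref{energy}), which is proved only for Borel (more generally Souslin) sets; for arbitrary sets the implication ``$\mathcal H^s(E)>0\Rightarrow$ there is a Frostman measure on $E$'' can fail, and with it Marstrand's projection theorem, since a non-analytic set of large dimension may have only countable compact subsets. So your argument proves the theorem whenever the family of segments has an analytic parameter set, but not in the stated generality; this is precisely why Keleti's proof avoids projection theorems, and why the extension in \cite{FM} (segments replaced by positive-measure subsets of lines) has to invoke the ``strong'' Marstrand Theorem \ref{marthm}, which fixes a single null exceptional set working simultaneously for all positive-measure subsets. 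To repair your proof either add a Borel/analytic hypothesis on the family, or replace the Marstrand step by an argument that survives for arbitrary $F_{p,q}$.
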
 

If Keleti's conjecture is true in $\Rn$ for all $n\geq 3$,  it gives a lot of new information on the dimension of Besicovitch sets:

\begin{thm}[Keleti \cite{Ke2}]
(1) If Conjecture \ref{keleti1} is true for some $n$, then, for this $n$, every Besicovitch set in $\Rn$ has Hausdorff dimension at least $n-1$.

(2) If Conjecture \ref{keleti1} is true for all  $n$, then every Besicovitch set in $\Rn$ has upper Minkowski and packing dimension  $n$ for all  $n$.
\end{thm}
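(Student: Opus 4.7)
The plan for Part~(1) is to apply Conjecture~\ref{keleti1} to the unit segments sitting inside $B$, reducing the problem to a lower bound for the dimension of the union of the corresponding full lines. Assume $B \subset \R^n$ is compact with $B \subset B(0,M)$. By a standard Borel selection argument, choose, for each $e \in S^{n-1}$, a unit segment $I_e = \{te + b_e : 0 \leq t \leq 1\} \subset B$ Borel measurably, and set $A := \bigcup_e I_e \subset B$ and $L := \bigcup_e \ell_e$, where $\ell_e = b_e + \R e$. Conjecture~\ref{keleti1} gives $\dim A = \dim L$, so since $A \subset B$ it suffices to prove $\dim L \geq n-1$.

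The key observation is that each $\ell_e$ passes through the bounded region $B(0,M)$. For $R > 2M$, write $\ell_e \cap \partial B(0,R) = \{q_e^\pm\}$ with $q_e^+ = b_e^\perp + r_e\, e$, where $b_e^\perp$ is the foot of the perpendicular from the origin to $\ell_e$ (so $|b_e^\perp| \leq M$) and $r_e = \sqrt{R^2 - |b_e^\perp|^2} \in [R-M, R]$. A short computation yields the separation estimate
\[
\|q_e^+ - q_{e'}^+\| \geq (R-M)|e-e'| - 2M \quad \text{for all } e, e' \in S^{n-1}.
\]
Equivalently, the rescaled map $e \mapsto q_e^+/R$ is $O(M/R)$-close to the identity on $S^{n-1}$. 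A content-comparison argument, pulling back any cover of $L \cap \partial B(0,R)$ by sets of diameter $\leq M$ to a cover of $S^{n-1}$ by sets of diameter $\leq 4M/R$, then gives $\mathcal{H}^{n-1}_M(L \cap \partial B(0,R)) \gtrsim R^{n-1}$. Since $\mathcal{H}^{n-1}_\delta$ is monotone nonincreasing in $\delta$, letting $R \to \infty$ forces $\mathcal{H}^{n-1}(L) = \infty$, hence $\dim L \geq n-1$.

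For Part~(2), the plan is a product trick. Given Besicovitch $B \subset \R^n$ and $k \geq 1$, the Cartesian product $B^k \subset \R^{nk}$ is again a Besicovitch set: it has zero $\mathcal{L}^{nk}$-measure, and for a direction $(e_1, \ldots, e_k) \in S^{nk-1}$ (with $e_i \in \R^n$, $\sum_i |e_i|^2 = 1$), a unit segment in $B^k$ is obtained by taking in each factor a sub-segment of length $|e_i|$ of a unit segment in $B$ in direction $e_i/|e_i|$ (and a single point when $e_i = 0$). Applying Part~(1) in $\R^{nk}$ (justified by the blanket hypothesis that Conjecture~\ref{keleti1} holds for all $n$) gives $\dim B^k \geq nk - 1$. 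Combining with the product inequalities~\eqref{productm} and~\eqref{productp} and the chain $\dim \leq \dim_P \leq \dim_M$,
\[
nk - 1 \leq \dim B^k \leq \dim_P B^k \leq k \dim_P B, \qquad nk - 1 \leq \dim_M B^k \leq k \dim_M B,
\]
giving $\dim_M B, \dim_P B \geq n - 1/k$. Letting $k \to \infty$ yields $\dim_M B = \dim_P B = n$.

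The hardest step will be the Hausdorff content estimate in Part~(1): because $e \mapsto b_e$ is only measurable, the map $e \mapsto q_e^+$ admits no Lipschitz upper bound, so converting the one-sided separation into a genuine lower bound for $\mathcal{H}^{n-1}_M(L)$ requires a careful preimage-cover comparison. The subtle point is ruling out covers in which most of the $(n-1)$-Hausdorff content of $S^{n-1}$ is absorbed by preimages of diameter much smaller than $M/R$, since in that regime the one-sided bound gives no useful control on the diameters of the image sets.
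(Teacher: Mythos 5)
Your Part (2) is essentially the paper's own argument (take powers $B^k$ and use the product inequalities \eqref{productm} and \eqref{productp} together with $\dim\leq\dim_P\leq\dim_M$), and your explicit verification that $B^k$ is a Besicovitch set in $\R^{nk}$ is correct; that half is fine once Part (1) is in place. The problem is Part (1). After invoking Conjecture \ref{keleti1} you have reduced the theorem to the unconditional claim that the union $L$ of full lines, one in each direction and all meeting $B(0,M)$, satisfies $\dim L\geq n-1$. This claim is itself essentially an open problem for $n\geq 3$: in the dual parametrization the vertical slices of $L$ are the images $\pi_t(C)$, $\pi_t(a,b)=a+tb$, of a parameter set $C$ with $\dim C\geq n-1$, and the best known projection theorem for this restricted one-parameter family (Theorem \ref{obproj1}) gives only $\dim\pi_t(C)\geq 1$, hence only $\dim L\geq 2$. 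Moreover your proposed route to it fails: the claimed estimate $\mathcal H^{n-1}_M(L\cap\partial B(0,R))\gtrsim R^{n-1}$ is false. A single point $p\in\partial B(0,R)$ can serve as $q_e^{+}$ for \emph{every} direction $e$ in a cap of angular radius $\sim M/R$ about $-p/|p|$, since any line through $p$ in such a direction passes within $M$ of the origin. Choosing, for each direction, a line through the nearest point of an $M$-net of $\partial B(0,R)$ makes $L\cap\partial B(0,R)$ a finite set, whose $\mathcal H^{n-1}_M$-content is zero. So the issue you flag as ``the hardest step'' (content absorbed by preimages of diameter $\ll M/R$) is not a technicality to be handled by a careful cover comparison; it is a genuine obstruction, and the one-sided separation bound cannot exclude it.

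The idea you are missing is the paper's projective transformation $F(\tilde x,x_n)=\tfrac{1}{x_n}(\tilde x,1)$. It sends the punctured line $\{te+(a,0):t\neq 0\}$ (with $e=(\tilde e,e_n)$, $e_n\neq 0$) onto the punctured line $\{u(a,1)+\tfrac{1}{e_n}(\tilde e,0):u\neq 0\}$, whose full extension passes through $\tfrac{1}{e_n}(\tilde e,0)$; as $e$ varies these points fill the hyperplane $\{x_n=0\}$. Thus after applying $F$ (which preserves Hausdorff dimension and maps segments of $B$ to segments of $F(B)$), the union of the \emph{full line extensions} of the transformed segments visibly contains a hyperplane, so it has dimension at least $n-1$ for trivial reasons, and Conjecture \ref{keleti1} transfers this bound back to the union of the segments, i.e.\ to $F(B)$. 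In other words, the conjecture is applied to a line family engineered so that the full-line union is large by inspection, rather than to the original family, for which largeness of the full-line union is itself the hard question.
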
 

\begin{proof}
Let $F$ be the projective transformation
$$F(\tilde x,x_n)=\frac{1}{x_n}(\tilde x,1),\quad (\tilde x,x_n)\in\R^{n-1}\times\R, x_n\neq 0.$$
Then for $e\in\Sn, e_n\neq 0, a\in \R^{n-1}, F$ maps the punctured line $l(e,a)=\{te+(a,0):t\neq 0\}$ onto the punctured line 
$\{u(a,1)+\frac{1}{e_n}(\tilde e,0): u\neq 0\}$. If $B$ contains a line segment on $l(e,a_e), e\in\Sn$, then $F(B)$ contains a line segment on $F(l(e,a_e)), e\in\Sn$. The line extensions of these latter punctured lines cover $\{x:x_n=0\}$ so $\dim F(B)\geq n-1$ provided Conjecture \ref{keleti1} is true. Clearly, $F$ does not change Hausdorff dimension, whence $\dim B\geq n-1$ and (1) holds.

(2) follows by the well-known trick of taking products and by the product inequalities \eqref{productm} and \eqref{productp}. Suppose that Conjecture \ref{keleti1} is true for all  $n$ and there exists a Besicovitch set $B$ in $\Rn$ with $\dim_PB<n$ for some $n$. Then $B^k\subset\R^{kn}$ would satisfy by \eqref{productp}
$$\dim B^k \leq \dim_PB^k \leq k\dim_PB<kn-1$$
for large $k$. This contradicts part (1) since $B^k$ is a Besicovitch set in $\R^{kn}$.
\end{proof}

Using Theorem \ref{marthm} Falconer and I proved in \cite{FM} that in Theorem \ref{keleti} line segments can be replaced by sets of positive one-dimensional measure. Later H\'era, Keleti and  M\'ath\'e in \cite{HKM} proved that sets of dimension one are enough. These methods and results extend to subsets of hyperplanes in $\Rn$, but they do not extend to lower dimensional planes. In particular they do not apply to Besicovitch sets in higher dimensions. 

More generally, we can investigate the following question: suppose $E$ is a Borel family of affine $k$-planes in $\Rn$. How does the Hausdorff dimension of $E$ (with respect to a natural metric) affect the Lebesgue measure and the Hausdorff dimension of the union $L(E)$ of  these planes, or of $B\cap L(E)$ if we know that $B$ intersects every $V\in E$ in a positive measure or in dimension $u$? Oberlin used in \cite{Ob3} the projection theorems of Section 4 to prove that  $\dim E > (k+1)(n-k) - k$ implies $\mathcal L^n(L(E))>0$, and this is sharp. He also proved some lower bounds for the dimension, which are sharp when $k=n-1$ and $0<s\leq 1$, then the lower bound is $n-1+s$, but they probably are not always sharp. 

H\'era, Keleti and  M\'ath\'e studied in \cite{HKM} questions of the above type and proved many interesting generalizations of the above results. For example they proved the following

\begin{thm}
Let $1\leq k<n$ be integers and $0\leq s\leq 1$. If $E$ is a non-empty family of affine $k$-planes in $\Rn$ with $\dim E=s$ and $B\subset L(E)$ such that $\dim B\cap V=k$ for every $V\in E$, then
$$\dim B = \dim L(E) = s+k.$$
\end{thm}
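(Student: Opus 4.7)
The theorem asserts $\dim B=\dim L(E)=s+k$. Since $B\subset L(E)$ the inequality $\dim B\leq\dim L(E)$ is automatic, so only the upper bound $\dim L(E)\leq s+k$ and the lower bound $\dim B\geq s+k$ require work.

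\emph{Upper bound.} I would argue by a direct covering in the affine Grassmannian $A(n,k)$. After writing $L(E)\cap B(0,R)=\bigcup_{V\in E_R}V\cap B(0,R)$ with $E_R=\{V\in E:V\cap B(0,R)\neq\emptyset\}$ and reducing (by exhausting $\Rn$ with balls) to a single $R$, fix $\delta>0$ and cover $E_R$ in a local chart by balls $B(V_i,r_i)$ with $\sum_i r_i^{s+\delta}\lesssim 1$. Every plane in $B(V_i,r_i)$ meets $B(0,R)$ inside the $Cr_i$-tubular neighborhood of $V_i\cap B(0,R)$, which can be covered by $\lesssim_R r_i^{-k}$ Euclidean balls of radius $Cr_i$. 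Summing the $(s+k+\delta)$-contributions gives $\sum_i r_i^{-k}(Cr_i)^{s+k+\delta}\lesssim_R\sum_i r_i^{s+\delta}\lesssim 1$, so $\dim L(E)\leq s+k+\delta$; let $\delta\downarrow 0$.

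\emph{Lower bound setup.} The plan is to construct a Frostman-type measure on $B$ by combining Frostman measures on $E$ and on each $B\cap V$. Fix $t<s$, $u<k$. By Frostman's lemma, pick $\mu\in\M(E)$ with $\mu(B(V,r))\leq r^t$, and for each $V\in E$, since $\dim(B\cap V)=k>u$, pick $\nu_V\in\M(B\cap V)$ with $\nu_V(B(x,r))\leq r^u$; a standard measurable-selection argument makes $V\mapsto\nu_V$ Borel. Set $\sigma := \int_E\nu_V\,d\mu(V)\in\M(B)$. Then for $x\in\Rn$ and $r>0$,
$$\sigma(B(x,r))=\int_E\nu_V(B(x,r)\cap V)\,d\mu(V)\leq r^u\cdot\mu\bigl(\{V\in E:\dist(x,V)\leq r\}\bigr).$$
Concluding $\dim B\geq t+u$ via Theorem \ref{energy}, and ultimately $\dim B\geq s+k$ by sending $t\uparrow s$, $u\uparrow k$, therefore reduces to the \emph{slab estimate} $\mu(\{V\in E:\dist(x,V)\leq r\})\lesssim r^t$ holding for $\sigma$-almost every $x$.

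\emph{Main obstacle.} The slab estimate can fail pointwise—at a focal point shared by many planes (think of a pencil of lines through a common vertex) the $\mu$-measure equals $\mu(E)$ for every $r$—so the crux is to show that its failure set is $\sigma$-null. This is exactly where the hypothesis $s\leq 1$ enters. My plan is a local ``straightening'' of $E$: in a chart of $A(n,k)$ write each $V\in E$ as $a(V)+V_0(V)$ with $V_0(V)\in G(n,k)$ and $a(V)\in V_0(V)^\perp$, and decompose $E$ into countably many Borel pieces on which $V\mapsto V_0(V)$ has small oscillation. On each piece, after an orthogonal change of coordinates, $E$ is $\eta$-close to a family of translates of a common direction $V_0$ parametrized by a set $C=\{a(V)\}\subset V_0^\perp$ whose dimension equals that of the piece; the almost-product structure, together with the unchanged condition $\dim(B\cap V)=k$, then lets Proposition \ref{sectprop} applied to the (nearly) Cartesian product $V_0^\perp\times V_0$ yield $\dim B\geq k+\dim C$. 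The technical heart is controlling the $\eta$-perturbation uniformly as one exhausts $E$ by these pieces. The hypothesis $s\leq 1$ is what makes this work: it means $\mu$ is at most one-dimensional in $A(n,k)$, so the slab estimate becomes a statement about a single ``transversal'' direction and survives the straightening with a negligible error. Summing over pieces and sending $t\uparrow s$, $u\uparrow k$ closes the argument.
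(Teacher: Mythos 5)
First, a remark on scope: the survey states this theorem without proof, citing H\'era, Keleti and M\'ath\'e \cite{HKM}, so there is no in-paper argument to compare against; I am judging your proposal on its own terms. Your covering argument for the upper bound $\dim L(E)\leq s+k$ is correct and routine. The lower bound, however, has a genuine gap at the straightening step. Your claim that on each localized piece the family ``is parametrized by a set $C=\{a(V)\}\subset V_0^\perp$ whose dimension equals that of the piece'' is false. Take $n=2$, $k=1$ and let $E$ be the pencil of all lines through the origin: $\dim E=1$, yet every Borel piece of $E$, however small the oscillation of directions, still consists of concurrent lines, so all translation parameters $a(V)$ vanish and $\dim C=0$. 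Your Fubini step (Proposition \ref{sectprop}) then yields only $\dim B\geq k$, losing all of $s$. This is precisely the focal-point obstruction you yourself flagged as the ``main obstacle,'' and the proposed straightening does not overcome it: the dimension of a family of planes can live entirely in the direction component, where no amount of localization converts it into a family of translates. The closing appeal to ``$s\leq 1$ means $\mu$ is at most one-dimensional, so the slab estimate survives with negligible error'' is not an argument; the slab estimate $\mu(\{V:\dist(x,V)\leq r\})\lesssim r^t$ fails identically, at every scale, at the vertex of a pencil. (The theorem is nevertheless true in that example --- polar coordinates plus Proposition \ref{sectprop} give $\dim B=2$ --- so it is the method, not the statement, that breaks.)

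The actual role of $s\leq 1$ is different from the one you assign it, and the survey's own machinery points to it. Parametrize the planes near a fixed $V_0$ as graphs $V=\{(x,l_Vx+c_V):x\in\R^k\}$, so that $E$ corresponds to $C=\{(l_V,c_V)\}\subset\R^{k(n-k)}\times\R^{n-k}$ with $\dim C=s$, and the fiber of $L(E)$ over $\{x=t\}$ is $\pi_t(C)$ with $\pi_t(l,c)=lt+c$. This $k$-parameter family of linear maps satisfies the transversality bound $\mathcal L^{k}(\{t\in Q:|\pi_t(l,c)|\leq\delta\})\lesssim\delta/|(l,c)|$, exactly as in the discussion following Theorem \ref{obproj}, and Kaufman's energy argument (as in the proof of Theorem \ref{projections}(1)) then gives $\dim\pi_t(C)=\dim C$ for almost every $t$ \emph{precisely when} $\dim C\leq 1$. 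This is where $s\leq 1$ enters, and it handles pencils correctly because the slope coordinates $l_V$ carry the dimension even when the intercepts $c_V$ do not. Feeding a dimension-$k$ set of good fibers into Proposition \ref{sectprop} gives $\dim L(E)\geq s+k$; incorporating $B$, whose trace on each plane has dimension $k$ but possibly $\mathcal L^k$-measure zero and hence can miss almost every fiber, requires an exceptional-set version of this projection estimate rather than the almost-everywhere statement --- that is the remaining technical content of \cite{HKM}, and it is absent from your sketch.
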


Again, the right hand equality can fail if $s>1$; consider for example more than 1-dimensional families of lines in a plane. But the left hand inequality might hold always. However it is unknown for $s>1$.

{\bf Furstenberg sets} are kind of fractal versions of Besicovitch sets. We consider them only in the plane. For Besicovitch sets we had a line segment in each direction. We would still have dimension 2 if we would replace line segments with sets of dimension 1. But things get much more difficult if we replace them with lower dimensional sets. We say that $F\subset\R^2$ is a Furstenberg $s$-set, $0<s\leq 1$, if for every $e\in S^1$ there is a line $L_e$ in direction $e$ such that $\dim F\cap L_e \geq s$. What can be said about the dimension of $F$? Wolff \cite{W4}, Section 11.1, showed that 
\begin{equation}\label{woff-fu}
\dim F\geq \max\{2s,s+1/2\}
\end{equation} 
and that there is such an $F$ with $\dim F = 3s/2+1/2$. He conjectured that $\dim F \geq 3s/2+1/2$ would hold for all Furstenberg $s$-sets.  
When $s=1/2$ Bourgain   \cite{B3} improved the lower bound 1 to $\dim F\geq 1+c$ for some absolute constant $c>0$. 

Oberlin \cite{Ob4} observed a connection to projections, and in particular to dimension estimates for exceptional projections and Conjecture \eqref{Ob}. In this way he improved Wolff's estimates for some particular Furstenberg sets.  Let us see how this goes.  

Let $E\subset\R$ be a Borel set with $\dim E = s$ and $C\subset\R^2$ a parameter set for our lines such that $\pi(C)=\R, \pi(x,y)=y$, whence $\dim  C\geq 1$. Set
$$F=\{(x,a+bx):x\in E, (a,b)\in C\}.$$
Then $F$ is (essentially) a Furstenberg $s$-set. As before for $t\in E$,
$$F\cap\{(t,y):y\in\R\} = \{t\}\times\pi_t(C)$$
where
$$\pi_t:\R^2\to\R^2,\quad \pi_t(a,b)=a+tb.$$
Let $0<u<(s+1)/2$. If Conjecture \eqref{Ob} holds, we obtain
$$\dim\{t:\pi_t(C)<u\} \leq 2u - 1 < s = \dim E.$$
Hence there is $E_1\subset E$ such that $\dim E_1=s$ and $\dim\pi_t(C)\geq u$ for $t\in E_1$. It follows by Proposition \ref{sectprop}  that $\dim F \geq s+u$. Letting $u\to (s+1)/2$, we get $\dim F \geq 3s/2+1/2$.

Thus the projection conjecture \eqref{Ob} implies Wolff's conjecture for these special Furstenberg sets. Even for these no better dimension estimate is known than \eqref{woff-fu}. Oberlin proved a better estimate, but weaker than the conjectured one, in the case where $C=C_1\times C_1$ and $C_1\subset\R$ is the standard symmetric Cantor set of dimension $1/2$. He did this by improving Kaufman's estimate $\dim\{t:\pi_t(C)<u\} \leq u$ in this case. 

Orponen has proved (unpublished) that if we have the lower bound $t+(2-t)s$ for some $t\in[0,1/2]$ for the  Hausdorff dimension of all Furstenberg $s$-sets $F\subset\R^2$, then
$$\dim\{e\in S^1: \dim_MP_e(F)\leq u\}\leq\max\left\{\frac{u-t}{1-t},0\right\}\quad \text{for}\ 0\leq u\leq 1.$$

Orponen improved in \cite{O7} Wolff's bound for the packing dimension:

\begin{thm}
For $1/2<s<1$ there exists a positive constant $\epsilon(s)$ such that for any Furstenberg $s$-set $F\subset\R^2$ we have $\dim_PF>2s+\epsilon(s)$. 
\end{thm}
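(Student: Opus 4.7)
The plan is to deduce this $\varepsilon$-improvement over Wolff's bound \eqref{woff-fu} from Orponen's corresponding $\varepsilon$-improvement \eqref{O2} on Kaufman's exceptional projection estimate for packing dimension. The two results are naturally linked: the standard point-line duality in the plane that underlies the proof of Theorem~\ref{proj-bes} also converts $\varepsilon$-gains in exceptional projection estimates into $\varepsilon$-gains in lower bounds for Furstenberg set dimensions.

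First I would set up a contradiction. Assume that for every $\eta > 0$ there is a Furstenberg $s$-set $F$ with $\dim_P F < 2s + \eta$. By the definition of packing dimension, decompose $F = \bigcup_j F_j$ with $\dim_M F_j < 2s + 2\eta$ for every $j$. For each $e \in S^1$, the Furstenberg condition gives $\dim(F \cap L_e) \geq s$, and countable stability of Hausdorff dimension produces an index $j(e)$ with $\dim(F_{j(e)} \cap L_e) \geq s$. A pigeonhole argument then yields a single $j_0$ and a set $E \subset S^1$ of Hausdorff dimension $1$ on which $\dim(F_{j_0} \cap L_e) \geq s$. Thus it suffices to show: any $F_0 \subset \R^2$ of finite upper Minkowski dimension with $\dim(F_0 \cap L_e) \geq s$ for $e$ in a $1$-dimensional set of directions $E$ satisfies $\dim_M F_0 \geq 2s + \epsilon(s)$.

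Next I would discretize at scale $\delta$ and dualize, in the spirit of the proof of Theorem~\ref{proj-bes}. Cover $F_0$ by $N_\delta \lesssim \delta^{-\dim_M F_0 - o(1)}$ many $\delta$-balls; for each of $\sim \delta^{-1}$ directions $e$ in a $\delta$-net of $E$, the line $L_e$ meets $\gtrsim \delta^{-s + o(1)}$ of these balls. Passing to the dual plane, points of $F_0$ become lines and the parameters of the lines $L_e$ become a $1$-dimensional set of dual points, each incident to $\gtrsim \delta^{-s}$ of the dual lines. Bounding these incidences by integrating pushforwards of Frostman-type measures on $F_0$ against projections $P_e$ and then applying the $\delta$-discretized form of \eqref{O2} to the pushforwards, one forces $N_\delta \gtrsim \delta^{-2s - \epsilon_0(s)}$ for some $\epsilon_0(s) > 0$. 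Choosing $\eta < \epsilon_0(s)/4$ contradicts the starting assumption and yields the theorem with, say, $\epsilon(s) = \epsilon_0(s)/2$.

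The principal obstacle is the discretization step: \eqref{O2} is phrased for Hausdorff dimension of the exceptional-direction set for the packing dimension of projections of a \emph{single} Borel set, whereas the Furstenberg hypothesis concerns intersections with lines in every direction. Bridging these requires a $\delta$-discretized version of \eqref{O2} losing only $\delta^{o(1)}$ factors, together with a multi-scale pigeonhole that isolates a single effective scale $\delta$ at which the incidence count can be read off. The quantitative nature of the gain $\epsilon(t) > 0$ in \eqref{O2} is essential: a merely qualitative improvement would be absorbed by the polynomial-in-$\delta^{-1}$ losses incurred by the pigeonhole.
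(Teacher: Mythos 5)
The paper does not actually prove this theorem: it is Orponen's result from \cite{O7}, and the survey only records that its proof is a combinatorial argument ``rather similar'' to the proof of the projection estimate \eqref{O2} --- two parallel arguments, not a deduction of one from the other. Your plan to \emph{derive} the Furstenberg bound from \eqref{O2} by duality has a genuine gap at its core, in the dualization step. Under the point--line duality of Theorem~\ref{proj-bes}, the direction $e$ with line $L_e=l(a_e,b_e)$ becomes the dual point $(a_e,b_e)$, and the hypothesis $\dim(F_0\cap L_e)\geq s$ becomes the statement that the \emph{fibre} of the linear map $\pi'_t(x,y)=y-tx$ (with $t=b_e$) over the single point $a_e$ meets $F_0$ in a set of dimension at least $s$. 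Estimate \eqref{O2}, by contrast, controls the set of directions in which the \emph{image} $P_e(A)$ has small packing dimension, for a single set $A$ with $\dim A=1$. These are different quantities, and there is no choice of set to which \eqref{O2} can be applied here: applied to the dual point set $\{(a_e,b_e)\}$ it only gives information about slices of the union of the full lines $\bigcup_e L_e$ (which is essentially Theorem~\ref{keleti}, not the Furstenberg problem), and applied to $F_0$ itself it bounds images of projections, which cannot force $\dim_M F_0>2s$. The projection--Furstenberg link described in Section~7 (Oberlin's observation) works only for the special product-type sets $F=\{(x,a+bx):x\in E,\ (a,b)\in C\}$, whose vertical slices genuinely \emph{are} the projections $\pi_t(C)$; for a general Furstenberg set the pieces $F\cap L_e$ have no common parameter set, which is exactly why the survey stresses that even the implication \eqref{Ob} $\Rightarrow$ Wolff's conjecture is only known ``for these special Furstenberg sets.''

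Two further steps are unsound as written. Countable stability of Hausdorff dimension gives $\sup_j\dim(F_j\cap L_e)\geq s$ and $\sup_j\dim E_j=1$, but neither supremum need be attained, so you only get a single $j_0$ after giving up an arbitrarily small amount; this is repairable but should be said. More seriously, $\dim(F_0\cap L_e)\geq s$ does \emph{not} imply that $L_e$ meets $\gtrsim\delta^{-s+o(1)}$ of the $\delta$-balls in a cover of $F_0$ at a prescribed scale $\delta$: a set of Hausdorff dimension $s$ can be covered by very few $\delta$-balls along infinitely many scales, and the good scales depend on $e$. Uniformizing a single effective scale over a one-dimensional set of directions is the actual content of Orponen's combinatorial proof; you flag it as ``the principal obstacle'' but do not carry it out. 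As it stands the proposal is a programme rather than a proof, and its central reduction to \eqref{O2} does not go through.
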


Recall Orponen's packing dimension estimate for projections \eqref{O2}. Proofs for these two results are rather similar, and based on combinatorial arguments.

This dimension problem is related to Furstenberg's question on sets invariant under $x\mapsto px(mod 1), x\in\R, p\in\Z$. This problem was recently solved, independently and by different methods, by Shmerkin \cite{S1} and by Wu \cite{Wu}.  

Other recent results on Furstenberg sets are due to Molter and Rela \cite{MR1}, \cite{MR3} and \cite{MR2},   and Venieri \cite{V}. Rela has a survey in \cite{R}. 

One reason for the great interest in Besicovitch sets and Kakeya conjecture is that the restriction conjecture
 
$$\|\widehat f\|_{L^q(\R^n)} \leq C(n,q)\|f\|_{L^{\infty}(S^{n-1})}\quad \text{for}\ q>2n/(n-1),$$

implies the Kakeya conjecture. For more on this, see for example \cite{W4} and \cite{M6}.

\section{$(n,k)$ Besicovitch sets}

We obtain other interesting Besicovitch set problems by replacing lines with higher dimensional planes.

\begin{df}\label{be1}
A  set $B\subset\Rn$ is said to be an $(n,k)$ \emph{Besicovitch set}  if $\mathcal L^n(B)=0$ and there is a non-empty open set $G\subset G(n,m)$ such that for every $V\in G$ there is $a\in\Rn$ such that $B(a,1)\cap(V+a)\subset B.$

We say that a set $B\subset\Rn$ is a full $(n,k)$ \emph{Besicovitch set}  if $\mathcal L^n(B)=0$ and there is a non-empty open set $G\subset G(n,m)$ such that for every $V\in G$ there is $a\in\Rn$ such that $V+a\subset B.$
\end{df}

We have used the open set $G$ in this definition for later convenience. Our main interest is for what pairs $(n,k)$ such sets exist and for this it is equivalent to use $G=G(n,k)$. 

Extending earlier results of Marstrand \cite{Ma3} ($n=3, k=2$), Falconer \cite{F1} ($k>n/2$) and Bourgain \cite{B1} ($2^{k-1}+k\geq n$)  R. Oberlin \cite{Ob} proved  that there exist no $(n,k)$ Besicovitch sets if $(1+\sqrt{2})^{k-1}+k > n$. For other values of $k\geq 2$ their existence is unknown. Let us now see how this relates to projections.

Mimicking the arguments from the previous section we only consider affine $k$-planes in $\Rn$ which are graphs over $\R^k$ identified with the coordinate plane $x_{k+1}=\dots=x_n=0$. They can be parametrized as
$$L(l,c)=\{(x,lx+c):x\in\R^k\},\quad l\in L(\R^k,\R^{n-k}), c\in\R^{n-k},$$ 
where $L(\R^k,\R^{n-k})$ is the space of linear maps from $\R^k$ into $\R^{n-k}$, identified with $\R^{k(n-k)}$. Let $\pi:\R^{k(n-k)}\times\R^k\to \R^{k(n-k)}$ with $\pi(l,c)=l$. Suppose we could find a Borel set $C\subset \R^{k(n-k)}\times \R^{n-k}$ for which the interior of $\pi(C)$ is non-empty and $\mathcal L^n(B)=0$ where
$$B=\bigcup_{(l,c)\in C}L(l,c).$$
Then $B$ would be a full $(n,k)$ Besicovitch set. Define 
$$\pi_t:L(\R^k,\R^{n-k})\times\R^{n-k}\to \R^{n-k},\quad \pi_t(l,c)=lt+c, (l,c)\in L(\R^k,\R^{n-k})	\times\R^{n-k}, t\in \R^k.$$
For $t\in\R^k$ we now have
$$B\cap\{(x,y)\in\R^k\times\R^{n-k}: x=t\} = \{t\}\times\pi_t(C).$$
So by Fubini's theorem $\mathcal L^n(B)>0$ if and only if $\mathcal L^{n-k}(\pi_t(C))>0$ for $t$ in a set of positive $k$-dimensional Lebesgue measure.

Hence the question for which values of $n$ and $k$ the projection properties (P1) and (P2) below are valid is very close to the question of the existence of $(n,k)$ Besicovitch sets:\\

(P1) If $C\subset \R^{k(n-k)}\times\R^{n-k}$ is a Borel set for which the interior of 
$\pi(C)$ is non-empty, then $\mathcal L^{n-k}(\pi_t(C))>0$ for positively many $t\in \R^k$.\\

(P2) If $C\subset \R^{k(n-k)}\times\R^{n-k}$ is a Borel set with 
$\mathcal L^{k(n-k)}(\pi(C))>0$, then $\mathcal L^{n-k}(\pi_t(C))>0$ for almost all $t\in \R^k$.\\

(P3) If $C\subset \R^{k(n-k)}\times\R^{n-k}$ is a Borel set with 
$\mathcal H^{k(n-k)}(C)>0$, then $\mathcal L^{n-k}(\pi_t(C))>0$ for almost all $t\in \R^k$.\\

Clearly, (P3) implies (P2) implies (P1). Probably (P1) and (P2) are equivalent but it may be difficult to show this without really verifying their validity. Notice that (P3) is almost the same as statement (2) in Oberlin's theorem \ref{obproj1} in the case $m=n-k, N=(k+1)(n-k)$. We shall come back to that, and we shall see that (P1) does not always imply (P3).

If $k=n-1$, then the $\pi_t$ form an $(n-1)$-dimensional family of linear maps $\Rn\to\R$, which is essentially the same as the full family of orthogonal projections. Thus these statements are true by standard Marstrand's projection theorem and we regain by Proposition \ref{Pprop} below the nonexistence of $(n,n-1)$ Besicovitch sets. This was proved by Marstrand by a simple geometric method for $n=3$ and that proof easily generalizes. For other pairs $(n,k)$ the validity of (P1) and (P2) does not seem to have an obvious answer. But we can easily state some connections. 

\begin{pr}\label{Pprop}
\begin{itemize}
\item[(1)]  Full $(n,k)$ Besicovitch sets do not exist if and only if (P1) holds.
\item[(2)]  $(n,k)$ Besicovitch sets do not exist  if  (P2) holds.
\end{itemize}
\end{pr}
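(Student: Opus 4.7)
The plan is to exploit the Fubini-type identity already established in the preamble to Section~7, namely
$$\Bigl(\bigcup_{(l,c) \in C} L(l,c)\Bigr) \cap \bigl(\{t\} \times \R^{n-k}\bigr) = \{t\} \times \pi_t(C),$$
which converts questions about the Lebesgue measure of a union of graph planes into questions about the fibres $\pi_t(C)$. Throughout I would pass to a $G_\delta$ measure-zero superset of $B$ so that the parameter sets $C$ I build remain Borel.

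For part (1), the first direction is: if a full $(n,k)$ Besicovitch set $B$ exists, then (P1) fails. Restricting to the open subset of $G(n,k)$ consisting of planes which are graphs over the coordinate $\R^k$, parametrised by an open $U\subset L(\R^k,\R^{n-k})$, I set $C := \{(l,c) : L(l,c) \subset B\}$; then $C$ is Borel, $\pi(C)\supset U$, and since $\mathcal L^n(B)=0$, Fubini applied to the displayed identity forces $\mathcal L^{n-k}(\pi_t(C))=0$ for a.e.\ $t\in\R^k$, violating (P1). Conversely, if (P1) fails for some bad $C$, I put $B := \bigcup_{(l,c)\in C} L(l,c)$; its vertical slices $\{t\}\times\pi_t(C)$ have $\mathcal L^{n-k}$-measure zero for a.e.\ $t$, so $\mathcal L^n(B)=0$, and $B$ contains a full translate of every plane parametrised by the interior of $\pi(C)$, hence is a full $(n,k)$ Besicovitch set.

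For part (2) only one implication is needed: (P2) implies no $(n,k)$ Besicovitch set exists. Arguing by contradiction, let $B$ be such a set and again restrict to graph planes parametrised by a bounded open set $U_0$. For each $l\in U_0$ choose $c(l)$ and $a(l)$ with $B(a(l),1)\cap L(l,c(l))\subset B$; the preimage of this $k$-ball under $x\mapsto(x,lx+c(l))$ contains a Euclidean ball $B(x_0(l),r_0)\subset\R^k$ with a uniform $r_0>0$ depending only on the bound on $\|l\|$. Covering $\R^k$ by balls of radius $r_0/2$ with rational centres $\{q_j\}$ decomposes $U_0$ into countably many Borel pieces $D_j := \{l\in U_0 : |x_0(l) - q_j|<r_0/2\}$, one of which, say $D:=D_{j_0}$, must have positive $\mathcal L^{k(n-k)}$-measure. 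After translating so that $q_{j_0}=0$ and setting $r:=r_0/2$, I define
$$C := \{(l,c) : l\in D,\ \{(x,lx+c) : x\in B(0,r)\}\subset B\},$$
so that $\pi(C)\supset D$ has positive measure and $\bigcup_{(l,c)\in C}\{(x,lx+c) : x\in B(0,r)\}\subset B$ has slices $\{t\}\times\pi_t(C)$ for $t\in B(0,r)$. Fubini on $B$ then gives $\mathcal L^{n-k}(\pi_t(C))=0$ for a.e.\ $t\in B(0,r)$, directly contradicting (P2).

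The main obstacle is the discretization step in part (2): unlike part (1), the planes are only guaranteed to meet $B$ in a unit ball rather than to lie in $B$ entirely, so the centre $x_0(l)$ of the trace in $\R^k$ varies with $l$ and must be replaced by a single common centre on a positive-measure subset, at the cost of losing freedom in both $l$ and $t$. A minor secondary issue is Borel measurability of the parameter sets $C$; this is handled by working with a $G_\delta$ envelope of $B$ and observing that incidence conditions of the form ``$L(l,c)\cap B(0,R)\subset U$'' are open in $(l,c)$ whenever $U$ is open.
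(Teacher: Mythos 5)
Your argument is correct and follows essentially the same route as the paper: part (1) is the Fubini slice identity $\bigl(\bigcup_{(l,c)\in C}L(l,c)\bigr)\cap(\{t\}\times\R^{n-k})=\{t\}\times\pi_t(C)$ read in both directions, and part (2) is the same rational-centre pigeonhole, where the paper's sets $C_q$, $q\in\Q^k$, play exactly the role of your $D_j$ and $C$. The only cosmetic difference is that the paper defines $C_q$ directly by the incidence condition (so $\pi(C_q)$ is analytic and no choice function $l\mapsto x_0(l)$ is needed), which cleanly disposes of the measurability point you flag.
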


So if we would know that (P1) and (P2) are equivalent, we would know that the existence of full $(n,k)$ Besicovitch sets and of  $(n,k)$ Besicovitch sets is equivalent. 

\begin{proof}
Part (1) was already stated above. 

(2) can be proven with an easy modification of the argument that we gave for Theorem \ref{proj-bes}. Let $B\subset\Rn$ be a  $G_{\delta}$-set which contains a unit $k$-ball in every direction. We need to show that $\mathcal L^n(B)>0$. 
For $q\in \Q^k$, let $C_q$ be the set of $(l,c)$ such that $l$ belongs to the closed unit ball $B_L$ of $L(\R^k,\R^{n-k})$ and $(q+t,lt+c)\in B$ for $t\in B(0,1/2)\}$. Then  $|lt|\leq |t|$ for $t\in\R^k$. Again each $C_q$ is a $G_{\delta}$-set and $\pi(\cup_{q\in\Q^k}C_q)= B_L$, so there is $q\in\Q^k$ for which 
$\mathcal{H}^{k(n-k)}(\pi(C_q))>0$. Thus by (P2) $\mathcal L^{n-k}(\pi_t(C))>0$ for almost all $t\in \R^k$.
Since for $t\in B(0,1/2)$,
\begin{equation*}
\{q+t\}\times\pi_t(C_q)=\{(q+t,lt+c):(l,c)\in C_q\}\subset B\cap \{(x,y):x=q+t\}.
\end{equation*}
we conclude that $\mathcal L^n(B)>0$.

\end{proof}

Let us go back to the statement (2) in Oberlin's theorem \ref{obproj1} in the case $m=n-k$ and $N=(k+1)(n-k)$. If $C$ is as in (P2), then $\dim C\geq k(n-k)$. If $k(n-k) > (k+1)(n-k) - k$, that is $k>n/2$, then by Theorem \ref{obproj1} (P2) holds and we obtain by Proposition \ref{Pprop} that $(n,k)$ Besicovitch sets do not exist. This was proved by Falconer in \cite{F1} with a different Fourier-analytic method. As mentioned after Theorem \ref{obproj1}, (P3) fails if $k(n-k) < (k+1)(n-k) - k$.   Suppose now that $(1+\sqrt{2})^{k-1}+k\geq n$. Then by the above mentioned results of Bourgain and Oberlin and by Proposition \ref{Pprop}(1) (P1) holds. In particular, we obtain in a rather indirect way a projection theorem from the results of Bourgain and Oberlin. Perhaps their methods could be used more directly to prove also other interesting projection theorems. We also see now that for pairs $(n,k)$ for which both  $k< n/2$ and $(1+\sqrt{2})^{k-1}+k\geq n$, (P3) fails but (P1) holds. It would be interesting to see why this is so just using arguments with projections.

\vspace{1cm}
\begin{footnotesize}
{\sc Department of Mathematics and Statistics,
P.O. Box 68,  FI-00014 University of Helsinki, Finland,}\\
\emph{E-mail address:} 
\verb"pertti.mattila@helsinki.fi"

\end{footnotesize}

\end{document}